\numberwithin{equation}{section}
\newtheorem{theorem}{Theorem}[section]
\newtheorem{proposition}[theorem]{Proposition}
\newtheorem{corollary}[theorem]{Corollary}
\newtheorem{lemma}[theorem]{Lemma}
\theoremstyle{definition}
\theoremstyle{remark}
\newtheorem{remark}[theorem]{Remark}
\renewcommand{\epsilon}{\varepsilon }
\newcommand{\R}{\mathbb{R}}
\begin{document}
\title[Existence, local uniqueness of bubbling solutions ]%
{ existence and  local uniqueness of bubbling solutions for poly-harmonic equations with critical growth  }

\author{  Yuxia Guo, Shuangjie Peng and Shusen Yan}

\address{  Department of Mathematical Science, Tsinghua University}
\email{yguo@math.tsinghua.edu.cn}

\address{School  of Mathematics and Statistics,
  Central  China Normal University,
  Wuhan, P. R. China }
\email{sjpeng@mail.ccnu.edu.cn}

\address{
Department  of Mathematics,  The
University of New
England,  Armidale, NSW 2351,   Australia}
\email{syan@turing.une.edu.au}

\subjclass{Primary 35B40, 35B45; Secondary 35J40}

\keywords {Multi-bubbling solutions, Poly-harmonic equations,  Critical exponents, Prescribed scalar curvature,  Local uniqueness, Symmetry}

\date{}


\begin{abstract}
We consider the following poly-harmonic equations with critical exponents
:

\begin{equation}\label{P}
 (-\Delta)^m u
=K(y)u^{\frac{N+2m}{N-2m}},\;\;\; u>0\;\;\;\hbox{
in } \mathbb{R}^N,
\end{equation}
where $N> 2m+2,m\in\mathbb{N}_{+}, K(y)$ is positive and
periodic in its  first $k$ variables $(y_1,\cdots, y_k)$, $1\leq k<\frac{N-2m}{2}$.
Under some conditions on $K(y)$ near its critical point, we  prove not only  that
problem~\eqref{P} admits solutions with infinitely many bubbles, but also that
    the bubbling solutions obtained  in our existence result are locally unique. This local
    uniqueness result implies that some bubbling solutions preserve the symmetry of the scalar curvature $K(y).$

\end{abstract}

\maketitle


\section{Introduction}

 We consider the following poly-harmonic equations with critical exponent:
$$
(-\Delta)^m u
=K(y)u^{\frac{N+2m}{N-2m}},\;\;\; u>0\;\;\;\hbox{
in } \mathbb{R}^N,\eqno{(P)}
$$
where $N > 2m+2,m\in\mathbb{N}_{+}$,  and $ K(y)$ is a bounded positive smooth function.

 When $m=1$,
 problem~$(P)$  is the prescribed scalar curvature problem in $\mathbb R^N$.
 It is also well-known that  a solution of the following problem:
\begin{equation}\label{sn}
\begin{cases}
-\Delta u
=K(y)u^{\frac{N+2}{N-2}},\;\;\; u>0& \hbox{
in } \mathbb{R}^N, \\
u\in \mathcal{D}^{1,2}(\mathbb{R}^{N}),
\end{cases}
\end{equation}
solves the
 prescribed scalar curvature problem  on $\mathbb{S}^N$.

\vskip8pt

 From the Pohozaev identity, it is easy to  see that   problem \eqref{sn} does
not always admit a solution.  We are interested in the sufficient
conditions on $K(y)$, under which \eqref{sn} admits a solution. In the last three decades,  there have been
considerable interests in the existence and multiplicity of solutions for problem
\eqref{sn} under some  suitable assumptions on the function
$K(y)$. See for example, \cite{AAP}, \cite{CNY},  \cite{L1},
\cite{Y} and the references therein. When $K(y)$ is positive and periodic, by gluing approximation solutions into genuine solutions which concentrate  at some  isolated  maximum points of the function $K(y)$, Li \cite{L1},
\cite{L2}, \cite{L3} proved that \eqref{sn} has infinitely many multi-bubbling solutions for $N\geq 3$ (see  \cite{Y} for the more general results).  When $K(y)$ is a positive radial function with a strict local maximum at $|x|=r_0>0$ and satisfies
$$K(r)=K(r_0)-c_0|r-r_0|^\beta+O(|r-r_0|)^{\beta+\theta},\quad r\in (r_0-\delta, r_0+\delta),$$
for some constants $c_0>0$, $\beta\in [2, N-2)$,  and small constants
$\theta>0$, $\delta>0$,  Wei and Yan \cite{WY} constructed solutions for  \eqref{sn} with large number of bubbles  concentrating near the sphere $|x|=r_0$ for $N\geq 5.$
Very recently, Li, Wei and Xu \cite{LWX} proved the existence
 of solutions with infinitely many bubbles for problem $(P)$, where the centers of the bubbles can be placed on
  all the $k$-dimensional lattice points with $k<\frac{N-2}2$. Moreover, they showed that the dimension restriction is optimal.
 For other  related problems with critical exponents, we refer to  \cite{AAP},
\cite{BC}, \cite{be}, \cite{CNY}, \cite{CL1}, 
\cite{L1}, \cite{L4},
\cite{yylinwm},
 \cite{LL}, 
 \cite{SZ}
and references therein.

\vskip8pt

 In recent years, the poly-harmonic operators have found considerable interest. For instance, when $m=2$,
problem (P) is related to the Paneitz operator, which was
introduced by Paneitz \cite{paneitz} for smooth $4$ dimensional
Riemannian manifolds  and was generalized by Branson \cite{branson} to
smooth $N$ dimensional Riemannian manifolds. We refer the reader
to the papers \cite{aschang}, \cite{bartschw}, \cite{bartsch},
\cite{efj}, \cite{ggs}, \cite{grunau}, \cite{gs1}, \cite{gs2},
\cite{ps1}, \cite{ps2}, 
  and the references therein, for various existence results on
the poly-harmonic operators and related  problems. One can see from these papers that the poly-harmonic operator presents new and
challenging features compared with the Laplace operator. To the best of our knowledge, not much is obtained
for the existence and the properties of bubbling solutions
for elliptic problems involving poly-harmonic operators and critical exponents.

The aim of this paper is two-fold.  Firstly,
  we will  construct solutions with infinitely many bubbles for  problem $(P)$ under some more reasonable
  conditions than those in \cite{LWX}. Secondly,
   we will study the properties  of the  bubbling solutions for  problem $(P)$, especially,
the periodicity property of these bubbling solutions. The problem for the symmetry of the bubbling
 solutions is  independently interesting
and is harder to study than the existence.  Obviously it
can not be solved by the methods of moving plane. Instead, we will attack it by studying the
local uniqueness of a sequence of bubbling solutions via various Pohozaev identities. Note that for general integer $m>0$, it is impossible
to estimate each term appearing in the Pohozaev identities. Thus, a better understanding
of the Pohozaev identities is essential in the proof of our local uniqueness result, which, we believe, will be very useful in the study of  other related problems.

\vskip8pt

 We assume that $K(x)$ satisfies the following conditions:

$(A_1)$  $0\le \inf_{\mathbb{R}^N}K(x)< \sup_{\mathbb{R}^N}K(x)<\infty;$

$(A_2)$  $K\in C^1({\mathbb{R}^N})$ is $1$-periodic in its first $k$ variables;

 $(A_3)$  $0$ is a critical point of $K$ and there exists some real numbers $\beta\in(N-2m, N)$ such that
 for all $|x|$ small, it holds

 $$
 K(x)=K(0)+\sum_{i=1}^N a_i|x_i|^\beta+R(x),
 $$
where $K(0)>0$, $a_i\not=0, \sum_{i=1}^N a_i<0, R(x)$ is $C^{[\beta], 1}({\mathbb{R}^N})$ near $0$ and $\sum_{s=0}^{[\beta]}|\nabla^s R(x)||x|^{-\beta+s}=O(|x|^\theta)$ for some $\theta>0$ as $x$ tends to $0,$ where
$C^{[\beta], 1}$ means that up to $[\beta]$ derivatives are Lipschitz functions, $[\beta]$ denotes the integer part of $\beta$, $\nabla^s$ denotes all the partial derivatives of order $s.$

\vskip8pt

  To state   the main  results of this paper,  we need to introduce  some notations first.  For any  integer $k\in [1, N],$ we define $k$-dimensional lattice by:
$$
Q_k:=\{\hbox{all the integer points in}\; \mathbb R^k\times\{0\} \subset \mathbb{R}^N\}, \quad\hbox{where } 0\in \mathbb{R}^{N-k}.
$$

 In this paper, we always assume that $k<\frac{N-2m}2$.
  Take any sequence of integers $\tilde P_j\in Q_k$, satisfying $\tilde P_i\ne \tilde P_j$ for $i\ne j$.
 It is easy to check
\[
  \sum_{i\ne j}\frac1{|\tilde P_i- \tilde P_j|^{\tau} }<+\infty,\quad \forall\; j,
\]
where $\tau= \frac{N-2m}2-\vartheta$, $\vartheta>0$ is a fixed small constant. With this choice of $\vartheta$,
we have  $\tau>k$.

 The condition we impose on the choice of the points $\tilde P_j\in Q_k$  is the following:
\begin{equation}\label{1-6-1}
 \max_j \sum_{i\ne j}\frac1{|\tilde P_i- \tilde P_j|^{\tau} } \le C\min_j \sum_{i\ne j}\frac1{|\tilde P_i- \tilde P_j|^{\tau} }<+\infty.
\end{equation}
 Let $L>0$ be a large integer.  Denote $P_{ j}= \tilde P_j L$. We are going to construct a bubbling solution,
 concentrating at $P_j$, $j=1, 2,\cdots$.  For this
 purpose,
we take $x_{j, L}$, which is close to $P_{ j}$,  $\mu_{j, L}>0$ large, and define
$$
U_{x_{j,L}, \mu_{j,L}}(y)=\tilde C_m \frac{\mu_{j,L}^{\frac{N-2m}{2}}}{(1+\mu_{j,L}^2|y-x_{j,L}|^2)^{\frac{N-2m}{2}}}$$
with $\tilde C_m=\bigl(\displaystyle\amalg_{h=-m}^{m-1}(N+2h)\bigr)^{\frac{N-2m}{4m}}$. Note that $U_{0,1}$ is the
unique solution (up to a translation and  a scaling) to the problem
(see \cite{wx}
):
$$
(-\Delta)^m u=u^{\frac{N+2m}{N-2m}}, u>0 \hbox{ in }\mathbb{R}^N.
$$
   Similar to \cite{LWX}, we define the following norms:
\begin{equation}\label{100-16-1}
\|\phi\|_{\ast}=\sup\limits_{y\in\mathbb{R}^{N}}\Big(\sigma(y)\sum\limits_{j=1}^{\infty}\frac{\mu_{j,L}^{\frac{N-2m}2}}{(1+\mu_{j,L}
|y-x_{j,L}|)^{\frac{N-2m}{2}+\tau}}\Big)^{-1}|\phi(y)|,
\end{equation}
\begin{equation}\label{101-16-1}
\|f\|_{\ast\ast}=\sup\limits_{y\in\mathbb{R}^{N}}\Big(\sigma(y)\sum\limits_{j=1}^{\infty}\frac{
\mu_{j,L}^{\frac{N+2m}2}}{(1+\mu_{j,L}|y-x_{j,L}|)
^{\frac{N+2m}{2}+\tau}}\Big)^{-1}|f(y)|,
\end{equation}
where $\sigma(y)=\hbox{min}\{1, \hbox{min}_{i=1}^{\infty}(\frac{1+\mu_{i, L}|y-x_{i, L}|}{\mu_{i, L}})^{\tau-1}\}$,
and $\tau>k$ is the same constant as in \eqref{1-6-1}.
In the following of this paper, we will also use the same notation $\|\cdot\|_*$ and $\|\cdot\|_{**}$
if the sum in \eqref{100-16-1} or \eqref{101-16-1} is from 1  to $n$.

\vskip8pt

Our first result is the following.

 \vskip8pt
 \begin{theorem}\label{th11}
  Suppose that  $K$ satisfies the conditions $(A_1), (A_2)$ and $(A_3)$. Assume that
  $N>2m+2,$  $1\le k<\frac{N-2m}{2}$,  and the sequence $P_{ j}= \tilde P_j L$ satisfies \eqref{1-6-1}.
  Then,  problem $(P)$ has a solution $u_L$, satisfying
 \begin{equation}\label{3-6-1}
   ||u_L-\sum_{i=1}^{\infty }U_{x_{i, L}, \mu_{i, L}}||_{*}=o_L(1),
   \end{equation}
 for some $x_{i,L} $ and $\mu_{i, L}$,   with
\begin{equation}\label{2-6-1}
 x_{i, L}=P_{ i}+o_L(1)
\end{equation}
 and
 \begin{equation}\label{4-6-1}
\mu_{i, L}  L^{-\frac{N-2m}{\beta-N+2m}}=O(1),
\end{equation}
where $o_L(1)\to 0$ as $L\to +\infty$.

\end{theorem}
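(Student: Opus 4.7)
The plan is to adapt a Lyapunov--Schmidt (finite-dimensional) reduction to the poly-harmonic setting with infinitely many bubbles, in the spirit of Wei--Yan \cite{WY} and the recent work \cite{LWX}, while exploiting the weighted norms $\|\cdot\|_{*},\|\cdot\|_{**}$ introduced above. The first step is to work with truncated sums: fix $n$, let $\bar x=(x_{1},\ldots,x_{n})$ with $x_{j}$ near $P_{j}$ and $\bar\mu=(\mu_{1},\ldots,\mu_{n})$ with $\mu_{j}$ of order $L^{(N-2m)/(\beta-N+2m)}$, and seek a solution of the form $u_{n}=\sum_{j=1}^{n}U_{x_{j},\mu_{j}}+\phi_{n}$ with $\phi_{n}$ belonging to the orthogonal complement of the approximate kernel spanned by $\partial_{x_{j}}U_{x_{j},\mu_{j}}$ and $\partial_{\mu_{j}}U_{x_{j},\mu_{j}}$, $j=1,\ldots,n$.

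Step one is the linear theory. Define the linearised operator
\[
L_{n}\phi=(-\Delta)^{m}\phi-\tfrac{N+2m}{N-2m}K(y)\Big(\sum_{j=1}^{n}U_{x_{j},\mu_{j}}\Big)^{\!\frac{4m}{N-2m}}\phi
\]
and prove, by contradiction plus blow-up analysis together with the non-degeneracy of $U_{0,1}$ for $(-\Delta)^{m}$ from \cite{wx}, an a priori bound $\|\phi\|_{*}\le C\|L_{n}\phi\|_{**}$ for functions in the orthogonal complement. The decay weight $\tau>k$ chosen in \eqref{1-6-1} is exactly what makes the right-hand side summable and the blow-up argument work, because $\sum_{i\ne j}|\tilde P_{i}-\tilde P_{j}|^{-\tau}$ converges uniformly in $j$. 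Then solve $L_{n}\phi=\ell_{n}+N_{n}(\phi)$ (with $\ell_{n}$ the error of the approximate solution and $N_{n}$ the superlinear remainder) by contraction mapping in the $\|\cdot\|_{*}$-ball. The bound on $\ell_{n}$ in $\|\cdot\|_{**}$ comes from condition $(A_{3})$ (controlling $K(y)-K(0)$ near each $x_{j}$) and from estimating the bubble--bubble interaction $\sum_{i\ne j}U_{x_{i},\mu_{i}}^{4m/(N-2m)}U_{x_{j},\mu_{j}}$ using \eqref{1-6-1}.

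Step two is the finite-dimensional reduction. Plug $\phi_{n}(\bar x,\bar\mu)$ back into the functional
\[
I(u)=\frac{1}{2}\int|\Delta^{m/2}u|^{2}-\frac{N-2m}{2N}\int K(y)u_{+}^{\frac{2N}{N-2m}}
\]
to obtain a reduced functional $F_{n}(\bar x,\bar\mu)$ whose critical points yield solutions. Expanding $F_{n}$, its leading part is
\[
F_{n}(\bar x,\bar\mu)=\sum_{j=1}^{n}\Big[A_{0}-\frac{A_{1}}{\mu_{j}^{\beta}}\sum_{i=1}^{N}a_{i}|x_{j,i}|^{\beta}\!\hbox{-like terms}+\frac{A_{2}}{\mu_{j}^{N-2m}}\sum_{i\ne j}\frac{1}{|x_{i}-x_{j}|^{N-2m}}\Big]+\hbox{l.o.t.},
\]
where the signs of $a_{i}$ and the condition $\sum a_{i}<0$ ensure that the inner optimisation in $\bar x$ near each $P_{j}$ admits a local max-min. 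The balance between the $\mu_{j}^{-\beta}$ (local) and $\mu_{j}^{-(N-2m)}L^{-(N-2m)}$ (interaction) terms forces \eqref{4-6-1}. A standard compactness/min-max argument on a suitable compact set of $(\bar x,\bar\mu)$ yields a critical point $(\bar x^{n},\bar\mu^{n})$ of $F_{n}$ satisfying \eqref{2-6-1}, \eqref{4-6-1} uniformly in $n$.

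Step three is passing to the limit $n\to\infty$. The uniform $\|\cdot\|_{*}$-bound on $\phi_{n}$ together with the periodicity of $K$ and \eqref{1-6-1} allow a diagonal extraction giving a limit solution $u_{L}$ which satisfies \eqref{3-6-1}. The main obstacle is the linear invertibility in Step one: for $m\ge 2$ the maximum principle is lost, so the blow-up analysis must rely on the explicit non-degeneracy of $U_{0,1}$ together with careful decay estimates for poly-harmonic Green's kernels, and the error from remote bubbles must be controlled via the weight $\sigma(y)$, which is exactly tuned so that the cross terms are summable when $\tau>k$. The remainder of the argument is then a (lengthy but) routine adaptation of \cite{WY,LWX} to the weaker assumption $(A_{3})$ with general $\beta\in(N-2m,N)$.
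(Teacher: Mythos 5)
Your overall architecture (linear theory via the weighted norms, finite-dimensional reduction, then a limit $n\to\infty$) matches the paper's, and Step~1 is essentially what the paper does in Part~I (Proposition~\ref{pro:1}, Lemma~\ref{l1-8-1}). The substantive divergence is in Step~2. The paper does \emph{not} pass to a reduced energy functional $F_n(\bar x,\bar\mu)$ and perform a min-max; instead it projects the equation onto the kernel directions $Z_{i,j}$ to obtain the algebraic system \eqref{j3e}--\eqref{j4e}, and then invokes Lemma~\ref{l1-13-9} in Appendix~D, a fixed-point/invertibility lemma for the infinite nonlinear system $a_j^\beta=\sum_{i\neq j}d_{ji}a_i$, to solve it with bounds \emph{uniform in $n$}. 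Your variational route is plausible in principle, but it glosses over two points that the algebraic route handles cleanly. First, $(A_3)$ only requires $a_i\neq0$ and $\sum a_i<0$, so the $a_i$ may have mixed signs; the inner critical point in $\bar x$ near each $P_j$ is then a genuine saddle, not a local max or min, and a max-min construction would have to split the coordinates according to $\operatorname{sgn}(a_i)$ and verify nondegeneracy of the linking. Second, the uniformity in $n$ of the critical-point bounds (which is what lets you pass to the limit) has to be proved; it is exactly the content of Lemma~\ref{l1-13-9}, which gives quantitative invertibility of the linearisation of the reduced system with a constant $c'$ independent of $n$. Your proposal simply asserts this uniformity as ``a standard compactness/min-max argument,'' and the place where \eqref{1-6-1} enters (the lower and upper bounds $c_0\leq\sum_i d_{ji}\leq c_1$ independent of $j$ and $n$, which feed into Lemma~\ref{l1-13-9}) is not made explicit.

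Your Step~3 is also vaguer than the paper's. Diagonal extraction alone does not close the argument: one needs the uniform $L^\infty$ bound $u_n\leq C(L)$ to get local $C^{2m}$ compactness, and then one must verify that the limiting function is \emph{positive}, which the paper does via the Riesz potential representation $u(x)=\int K(y)|y-x|^{2m-N}u_+^{(N+2m)/(N-2m)}$; for $m\geq2$ there is no maximum principle, so positivity of the limit is not automatic from positivity of the approximants without this step.
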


\vskip8pt

Note that
\eqref{4-6-1} implies
\begin{equation}\label{2-1-9}
\mu_0\le \frac{\mu_{i, L}}{\mu_{j, L}} \le \mu'_0,\quad \forall \; i, \;j,
\end{equation}
 for some constants $\mu'_0> \mu_0>0$ which are independent
of $L$.

To discuss the symmetry properties of the solutions obtained in Theorem~\ref{th11}, we proceed with
 the following local uniqueness result for the bubbling solutions of $(P)$.

\vskip8pt
\begin{theorem}
\label{main} Under the same assumptions as in Theorem~\ref{th11}, if $u^{(1)}_{L}
$  and
 $u^{(2)}_{L}$ are two sequence of solutions of problem $ (P)$, which
 satisfy \eqref{3-6-1}, \eqref{2-6-1}  and \eqref{2-1-9},
 then
$u^{(1)}_{ L}= u^{(2)}_{ L}$ provided $L>0$ is large enough.
\end{theorem}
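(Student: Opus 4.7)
The plan is to argue by contradiction: suppose there is a subsequence $L\to\infty$ along which $u^{(1)}_L\neq u^{(2)}_L$, and set
\[
\xi_L(y)=\frac{u^{(1)}_L(y)-u^{(2)}_L(y)}{\|u^{(1)}_L-u^{(2)}_L\|_{L^\infty(\mathbb R^N)}}.
\]
Then $\|\xi_L\|_\infty=1$ and $\xi_L$ satisfies the linear equation
\[
(-\Delta)^m \xi_L = K(y)\,c_L(y)\,\xi_L,\qquad c_L(y)=\tfrac{N+2m}{N-2m}\int_0^1\bigl(tu^{(1)}_L+(1-t)u^{(2)}_L\bigr)^{\frac{4m}{N-2m}}dt.
\]
The goal is to prove $\xi_L\to 0$ in a strong enough sense to contradict $\|\xi_L\|_\infty=1$. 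Using the assumption \eqref{3-6-1} and the elementary decay of the bubbles $U_{x_{i,L}^{(\ell)},\mu_{i,L}^{(\ell)}}$, one first obtains an $\|\cdot\|_*$-type a priori bound for $\xi_L$, and then performs a blow-up analysis near each centre $P_j$.

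Near $P_j$ define $\tilde\xi_L^{(j)}(z)=\xi_L\bigl(\mu_{j,L}^{-1}z+x_{j,L}^{(1)}\bigr)$. By standard regularity for the poly-harmonic operator and $\|\xi_L\|_\infty=1$, a subsequence converges in $C^{2m}_{\mathrm{loc}}$ to a bounded solution $\xi_\infty$ of the linearised equation
\[
(-\Delta)^m \xi_\infty=\tfrac{N+2m}{N-2m}\,U_{0,1}^{\frac{4m}{N-2m}}\,\xi_\infty,
\]
and the non-degeneracy result of Wei--Xu (used earlier for $(P)$) forces
\[
\xi_\infty=\sum_{i=1}^N a_i\,\frac{\partial U_{0,1}}{\partial z_i}+a_{N+1}\Bigl(\tfrac{N-2m}{2}U_{0,1}+z\cdot\nabla U_{0,1}\Bigr).
\]
Accordingly, I would decompose
\[
\xi_L=\sum_{j=1}^{\infty}\sum_{i=1}^{N+1} b_{j,i,L}\,Z_{j,i,L}+\eta_L,
\]
where $Z_{j,i,L}$, $i=1,\dots,N$, are the translation modes $\partial U_{x_{j,L},\mu_{j,L}}/\partial x_i$, $Z_{j,N+1,L}$ is the dilation mode, and $\eta_L$ is $\|\cdot\|_*$-orthogonal to the approximate kernel. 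The hard part is to show $b_{j,i,L}=o(1)$ uniformly in $j$ and $\|\eta_L\|_*=o(1)$; once both hold, $\xi_L\to 0$ uniformly, a contradiction.

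The orthogonal part $\eta_L$ is controlled by the standard linear theory in the $\|\cdot\|_{**}\to\|\cdot\|_*$ framework used to prove Theorem~\ref{th11}, so the real content is to kill the coefficients $b_{j,i,L}$ via a family of local Pohozaev identities on balls $B_{d}(P_j)$ with $d\in(0,L/4)$. Testing the identity
\[
(-\Delta)^m(u^{(1)}_L-u^{(2)}_L)=K(y)\bigl((u^{(1)}_L)^{\frac{N+2m}{N-2m}}-(u^{(2)}_L)^{\frac{N+2m}{N-2m}}\bigr)
\]
against the vector fields $\partial_{y_i}(u^{(1)}_L+u^{(2)}_L)$ and $y\cdot\nabla(u^{(1)}_L+u^{(2)}_L)+\tfrac{N-2m}{2}(u^{(1)}_L+u^{(2)}_L)$, and then dividing through by $\|u^{(1)}_L-u^{(2)}_L\|_{L^\infty}$, produces $N+1$ relations whose leading terms involve, on the one hand, $\partial_i K(x_{j,L})$ and $K(x_{j,L})$ weighted by $b_{j,i,L}$ through the integrals of $U_{0,1}^{\frac{2N}{N-2m}}z_i$ and $U_{0,1}^{\frac{2N}{N-2m}}\bigl(\tfrac{N-2m}{2}+z\cdot\nabla\bigr)U_{0,1}$, and on the other hand a controllable remainder determined by the interactions between bubbles (summable thanks to \eqref{1-6-1} and $\tau>k$) together with $\eta_L$. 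Assumption $(A_3)$ with $\beta\in(N-2m,N)$ is exactly what makes the $\beta$-order expansion of $K$ near $P_j$ yield a non-degenerate linear system for $(b_{j,1,L},\dots,b_{j,N+1,L})$, forcing all coefficients to be $o(1)$.

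The main obstacle is the Pohozaev analysis: for general $m$ the boundary integrals on $\partial B_d(P_j)$ involve derivatives up to order $2m-1$ of $u^{(\ell)}_L$, and individual terms cannot be computed explicitly. One has to exploit the bilinear/symplectic structure of the boundary form associated with $(-\Delta)^m$, so that after integration against the translation or dilation vector field a huge amount of cancellation occurs, and only the terms involving $\nabla K$ and the bubble integrals against the kernel survive up to $o(1)$. Handling this efficiently, and summing the estimates uniformly over the infinitely many bubbles $P_j$ while keeping the error of order $o(1)$ — which requires bookkeeping via the $\|\cdot\|_*$-weight $\sigma(y)$ and the assumption \eqref{1-6-1} — is where the real work of the theorem lies.
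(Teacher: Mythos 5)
Your overall strategy matches the paper's: normalize the difference $u^{(1)}_L-u^{(2)}_L$, show the rescaled quotient near each bubble centre converges to a kernel element, and use local Pohozaev identities to kill the kernel coefficients. However, there are two genuine gaps relative to what the paper actually does.

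\textbf{Normalization.} You normalize by $\|u^{(1)}_L-u^{(2)}_L\|_{L^\infty}$; the paper normalizes by $\|u^{(1)}_L-u^{(2)}_L\|_*$. This is not cosmetic. With the $\|\cdot\|_*$ normalization, $\|\eta_L\|_*=1$, and the linear estimate \eqref{1-11-9} forces the weighted supremum to be attained in $\cup_j B_{R\mu^{-1}_{j,L}}(x^{(1)}_{j,L})$; once the Pohozaev analysis shows $\eta_L=o(1)$ there, \eqref{1-11-9} yields $1=\|\eta_L\|_*\le o(1)$, the contradiction. With your $L^\infty$ normalization, there is no mechanism that forces $\|\xi_L\|_\infty$ to be achieved near a bubble: the coefficient $K(y)c_L(y)$ decays rapidly away from the centres, so far from the lattice $\xi_L$ is nearly $(-\Delta)^m$-harmonic and nothing in the equation makes it small there. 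So even after the blow-up and Pohozaev steps you cannot conclude $\xi_L\to 0$ uniformly. You do gesture at an ``$\|\cdot\|_*$-type a priori bound'' for $\xi_L$, but with your normalization $\|\xi_L\|_*$ actually tends to $0$ (the bubble peak forces $\|\xi_L\|_*\lesssim \mu_L^{-(N-2m)/2}$), so the $\|\cdot\|_*$ machinery does not transfer.

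\textbf{Decomposition and the Pohozaev structure.} The decomposition $\xi_L=\sum_{j,i} b_{j,i,L}Z_{j,i,L}+\eta_L$ with an orthogonal remainder is not what the paper does and is extra work: with infinitely many bubbles, well-posedness of the projection and uniform summability of the coefficients would need their own argument. The paper avoids this by proving directly (Lemma~\ref{l1-10-4}) that $\tilde\eta_{L,j}\to\sum_{k=0}^N b_{j,k}\psi_k$ in $C^m_{\mathrm{loc}}$, and separately obtaining a Green's-representation expansion of $\eta_L$ away from the centres (Lemma~\ref{l1-9-4}). Finally, the key technical content you flag as ``where the real work lies'' --- understanding the boundary terms of the Pohozaev identities for general $m$ --- is exactly Propositions~\ref{p1-13-10} and~\ref{p2-13-10}: they show that $f_{m,i}$ and $g_m$ are sums of bilinear forms in derivatives of order $\le 2m-1$ of the two arguments, and the decisive step \eqref{8-15-10}--\eqref{11-15-10} transfers the integral to $\partial B_\theta$, observes it is $\theta$-independent while scaling like $\theta^{-(N-2m+2(h+k))}$, and concludes it must vanish. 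Without that explicit cancellation (not just the heuristic that ``a huge amount of cancellation occurs''), the proof is incomplete. You also implicitly need the quantitative Propositions~\ref{p1-18-9} and~\ref{p2-18-9} (sharp estimates for $|x_{j,L}-P_j|$ and $\mu_{j,L}$) to make the remainder terms in the Pohozaev identities subordinate to the leading kernel term.
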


\vskip8pt

   Local uniqueness is an important topic in the study of elliptic partial equations. Theorem~\ref{main}  can be used to
study the properties of the bubbling solutions.
A direct consequence of this result is the following periodicity property of the solutions.

\vskip8pt
\begin{theorem}
\label{th1-7-1} Under the same assumption as in Theorem~\ref{th11}, if $\{\tilde P_j:\; j=1, 2,\cdots\}= Q_k$, and $u_{L}
$ is a solution of $(P)$, which
 satisfies \eqref{3-6-1}, \eqref{2-6-1}  and \eqref{2-1-9},
 then
$u_{ L}$ is $L-$periodic in $y_j$, $j=1, 2, \cdots, k$, provided $L>0$ is large enough.
\end{theorem}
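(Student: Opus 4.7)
The plan is to exploit the $1$-periodicity of $K$ together with the local uniqueness asserted in Theorem~\ref{main}. Fix $j\in\{1,\ldots,k\}$, let $e_j$ denote the $j$-th standard basis vector of $\mathbb{R}^N$, and set
\[
v_L(y):=u_L(y-Le_j).
\]
Since $L$ is a positive integer and $K$ is $1$-periodic in its first $k$ variables by $(A_2)$, one has $K(y-Le_j)=K(y)$, so $v_L$ is again a solution of $(P)$. The heart of the argument is to check that $v_L$ satisfies the bubbling ansatz \eqref{3-6-1}, \eqref{2-6-1}, \eqref{4-6-1} and \eqref{2-1-9} with (up to reindexing) the same bubble data as $u_L$, after which Theorem~\ref{main} forces $u_L\equiv v_L$.

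The verification goes as follows. The function $v_L$ is approximated in $\|\cdot\|_*$ by $\sum_i U_{x_{i,L}+Le_j,\mu_{i,L}}$, whose centers lie close to $P_i+Le_j=(\tilde P_i+e_j)L$. The assumption $\{\tilde P_i:i\ge 1\}=Q_k$ is crucial: the shift $\tilde P_i\mapsto\tilde P_i+e_j$ is a bijection of $Q_k$, so there exists a permutation $\pi$ of the index set with $\tilde P_i+e_j=\tilde P_{\pi(i)}$. Setting $x'_{i,L}:=x_{\pi^{-1}(i),L}+Le_j$ and $\mu'_{i,L}:=\mu_{\pi^{-1}(i),L}$, the approximating sum becomes $\sum_i U_{x'_{i,L},\mu'_{i,L}}$ with $x'_{i,L}$ close to $P_i$, and the bounds \eqref{2-6-1}, \eqref{4-6-1} and \eqref{2-1-9} follow since they are invariant under permutations of the labels. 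The same symmetry applies to the geometric condition \eqref{1-6-1} and to the weight defining $\|\cdot\|_*$ in \eqref{100-16-1}, which depends only on the set of centers and concentrations, not on their enumeration. Hence \eqref{3-6-1} carries over to $v_L$ with the relabeled data.

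Applying Theorem~\ref{main} to the pair $(u_L,v_L)$ one obtains $u_L=v_L$, i.e.\ $u_L(y)=u_L(y-Le_j)$ for every $y\in\mathbb{R}^N$. Running this for $j=1,\ldots,k$ yields the claimed $L$-periodicity in each of the first $k$ variables. The substantive mathematical content is entirely packaged inside Theorem~\ref{main}; the only real task for the present statement is the bookkeeping around the permutation $\pi$. The main obstacle, therefore, is not in this deduction but in ensuring that Theorem~\ref{main} is genuinely applicable to both $u_L$ and the relabeled $v_L$ without hidden ordering constraints on the bubbles — a point made transparent by the manifest symmetry of \eqref{100-16-1} and \eqref{1-6-1} under permutations of the index set.
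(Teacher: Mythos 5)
Your proposal is correct and follows essentially the same route as the paper: translate $u_L$ by $Le_j$, observe (using $1$-periodicity of $K$ and $\{\tilde P_i\}=Q_k$) that the shifted function is a bubbling solution with the same blow-up set, and invoke Theorem~\ref{main}. The paper states this in two lines; you have merely made explicit the relabeling permutation $\pi$ and the invariance of \eqref{1-6-1}, \eqref{100-16-1} under it, which is a helpful clarification but not a different argument.
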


\vskip8pt

  The construction of bubbling solutions for $(P)$ is somewhat standard. So in this paper, we will be
a bit sketchy in the proof of Theorem~\ref{th11}. Our main contribution to the existence result
is to find a more suitable condition \eqref{1-6-1} for $\tilde P_{j}$ in order to construct a bubbling solution blowing at the
given set $\{P_{j}= L\tilde P_{j} :\; j=1, 2, \cdots\}$. Condition \eqref{1-6-1} shows that it is not
necessary to take all the lattice point $Q_k$ to form the set $\{\tilde P_{j} :\; j=1, 2, \cdots\}$.
Of course, if we take all the points in $Q_k$, then \eqref{1-6-1} holds.

In order to prove the local uniqueness result, we will use various kinds of local Pohozaev identities.
 Note that in the case of  $m=1$ (studied in \cite{DLY}),  each integral appearing in the  local Pohozaev identities
can be calculated or estimated. However,  we can not follow the same procedure as in \cite{DLY} for general
integer $m>0$, because the number of the integrals appearing in the  local Pohozaev identities
approaches to infinity as $m\to +\infty$. To make the things even worse, it seems impossible to give
a precise local Pohozaev identities for general $m$. Thus,  a better understanding of all those
 local Pohozaev identities plays an essential role in the proof of Theorem~\ref{main}.  See the discussions
 in Section~3.

  Note that in \cite{WY,DLY}, the  center of the bubbles lies in a one dimensional space. When
 the center of the bubbles lies in a $k$ dimensional space, technical difficulties occur in
 the study of  both the existence and the local uniqueness of the bubbling solutions. For the
 existence, these difficulties were overcome in \cite{LWX} by introducing the weight function $\sigma$ in
 the norms defined in \eqref{100-16-1} and \eqref{101-16-1}. For the local uniqueness, it seems that the
  key lemma
 in \cite{DLY} does not hold anymore if $k$ is large. So new estimates need to be developed to deal with
 this case.

 Let point out that we can replace $(A_2)$ by the following condition: $K\in C^1({\mathbb{R}^N})$ is $l_i$-periodic in $y_i$, $i=1, \cdots, k$, for some $l_i>0$.  Under this new condition, we just need to define $Q_k$ as follows:

 $$
Q_k:=\{(j_1l_1, \cdots, j_k l_k, 0)\in \mathbb R^k\times\{0\} \subset \mathbb{R}^N,\;\;
\text{ for all integers $j_i$, $i=1, \cdots, k$}\}.
$$

\vskip8pt

 The paper is organized as follows.    In section 2, we study the existence of  bubbling solutions.
 Section 3 is devoted  to the discussion of  the local uniqueness and periodicity of a sequence of bubbling solutions.  In  Appendix A, some basic estimates are proved, while in Appendix B, we compute the formula for the asymptotic energy expansion. Some estimates of the error term are given in Appendix C. In Appendix D, we give some basic lemmas
 in algebra, which will be  used in the proof of our existence and local uniqueness results.

\section{Existence of solutions with infinitely many bubbles}

 In this section, we will prove Theorem~\ref{th11}. To this end,
  we firstly construct a bubbling solutions blowing-up at finite points $\{P_{1}, \cdots, P_{n}\}$.
Throughout this paper, we define $m^*=\frac{2N}{N-2m}$.

\vskip8pt

  \begin{theorem}\label{th2-7-1}

  Suppose that  $K$ satisfies the conditions $(A_1), (A_2)$ and $(A_3)$. Assume that
  $N>2m+2, $ $1\le k<\frac{N-2m}{2} $,  and the sequence $\{ \tilde P_j:\; j=1, \cdots, n\}$ satisfies \eqref{1-6-1},
  where the constant $C$ is independent of $n$. Then,  problem  $(P)$ admits  a solution $u_{n,L}$, satisfying
\begin{equation}\label{nn3-6-1}
   ||u_{n, L}-\sum_{i=1}^{n }U_{x_{n,i,L}, \mu_{n,i,L}}||_{*}=o_L(1),
   \end{equation}
 for some $x_{n,i, L} $ and $\mu_{n,i,L}$,   with
$ x_{n,i,L}=P_{i}+o_L(1)$
 and
$\mu_{n,i,L}  L^{-\frac{N-2m}{\beta-N+2m}}\le C,
$
where $C>0$ is independent of $n$, and $o_L(1)\to 0$ uniformly in $n$ as $L\to +\infty$.

\end{theorem}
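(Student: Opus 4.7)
The plan is to prove Theorem~\ref{th2-7-1} by a finite-dimensional Lyapunov--Schmidt reduction, carefully arranged so that every estimate is uniform in the number $n$ of bubbles. First I would fix the parameter set
\begin{equation*}
D_n = \Bigl\{(x,\mu)\in (\mathbb{R}^N)^n\times(0,\infty)^n:\ |x_i-P_i|\le L^{\vartheta},\ c_1\le \mu_i L^{-\frac{N-2m}{\beta-N+2m}}\le c_2,\ 1\le i\le n\Bigr\}
\end{equation*}
for suitable $c_1<c_2$, and look for a solution of $(P)$ of the form $u=W_n+\phi$, where $W_n=\sum_{i=1}^n U_{x_{i},\mu_{i}}$ and $\phi$ is a small correction lying in the orthogonal complement $E_n$ (in the natural $H^m$-type inner product) of the approximate kernel $K_n$ spanned by $\partial_{x_i^\alpha} U_{x_i,\mu_i}$ and $\partial_{\mu_i} U_{x_i,\mu_i}$, $1\le i\le n$, $1\le\alpha\le N$. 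Substitution yields
\begin{equation*}
L_n\phi = N_n(\phi) + R_n \pmod{K_n},
\end{equation*}
where $L_n\phi := (-\Delta)^m\phi - \frac{N+2m}{N-2m}K(y)W_n^{\frac{4m}{N-2m}}\phi$ is the linearized operator, $N_n(\phi)$ is the superlinear remainder, and $R_n=(-\Delta)^mW_n - K(y)W_n^{(N+2m)/(N-2m)}$ is the approximation error.

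The technical heart of the argument is to show that $L_n:E_n\to E_n$ admits a bounded right inverse whose operator norm (from $\|\cdot\|_{**}$ to $\|\cdot\|_{*}$) is independent of $n$ and $L$. Following the standard contradiction/blow-up argument, one tests the hypothetical blow-up sequence against the non-degeneracy of $U_{0,1}$ in the weighted spaces. The weight $\sigma(y)$ in the definitions \eqref{100-16-1}--\eqref{101-16-1} is precisely what is needed when the bubble centers lie on a $k$-dimensional lattice with $k\ge 1$, because summing the raw kernels $(1+\mu_{j,L}|y-x_{j,L}|)^{-(N-2m)/2-\tau}$ over $j$ without $\sigma$ would diverge. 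Combined with $\tau>k$ and the uniform-in-$n$ bound \eqref{1-6-1} on $\sum_{i\ne j}|\tilde P_i-\tilde P_j|^{-\tau}$, this gives the invertibility with constants independent of $n$. Next I would estimate $\|R_n\|_{**}$ by expanding $K(y)$ near each $x_{i}$ via $(A_3)$, which produces contributions of order $\mu_i^{-\beta}$ from the local expansion and cross-terms controlled by \eqref{1-6-1}; the total bound is $o(1)$ times the natural scale. A contraction mapping argument on $\{\phi\in E_n:\|\phi\|_*\le \delta\}$ then yields a unique $\phi=\phi_n(x,\mu)$ solving the projected equation, smoothly depending on $(x,\mu)\in D_n$, with $\|\phi_n\|_*$ small uniformly in $n$.

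It remains to solve the reduced finite-dimensional problem: to choose $(x,\mu)\in D_n$ so that all Lagrange multipliers vanish, equivalently so that the reduced functional $F_n(x,\mu):=I(W_n+\phi_n(x,\mu))$ is critical, where $I$ is the action associated with $(P)$. Using the energy expansion to be established in Appendix~B, one obtains
\begin{equation*}
F_n(x,\mu) = n\, A_0 \,+\, \sum_{i=1}^{n}\Bigl(-\frac{B_0}{\mu_i^{\beta}}\sum_{\alpha=1}^N a_\alpha |x_{i,\alpha}-P_{i,\alpha}|^{\beta}\Bigr) \,-\, \frac{C_0}{\mu^{N-2m}}\sum_{i\ne j}\frac{1}{|x_{i}-x_{j}|^{N-2m}} + o(\text{leading})
\end{equation*}
with positive constants $A_0,B_0,C_0$ (the local $\mu_i^{-\beta}$ term coming from $(A_3)$, the lattice-interaction term from the sum of bubble pairs). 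Because $\sum_\alpha a_\alpha<0$ and, by \eqref{1-6-1} and the scaling $\mu_i\sim L^{(N-2m)/(\beta-N+2m)}$, the interaction term is of strictly lower order than the local term, a standard min-max/degree argument produces an interior critical point of $F_n$ in $D_n$ uniformly in $n$. This critical point yields the desired $(x_{n,i,L},\mu_{n,i,L})$.

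The principal obstacle, in my view, is not the reduction scheme itself but the uniformity in $n$: the linearized operator must be invertible with $n$-independent bound, the remainder $R_n$ must have $\|\cdot\|_{**}$ norm bounded independently of $n$, and the interaction terms in the expansion of $F_n$ must be controlled by $n$-independent constants. All three points rely crucially on the replacement of the naive all-lattice assumption by the summability condition \eqref{1-6-1} together with $\tau>k$, and on the introduction of the weight $\sigma(y)$ in \eqref{100-16-1}--\eqref{101-16-1}, which together tame the anisotropy coming from placing bubbles on a $k$-dimensional sublattice of $\mathbb{R}^N$.
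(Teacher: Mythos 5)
Your Part~I (contraction mapping for the projected correction $\phi$ in the weighted norm, with the weight $\sigma$ and the summability condition~\eqref{1-6-1} driving the uniformity in $n$) matches the paper's Proposition~\ref{pro:1} and Lemma~\ref{l1-8-1} closely. The divergence is in the finite-dimensional step, and there you have a genuine gap.

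The paper does \emph{not} pass to a reduced energy functional $F_n(x,\mu)=I(W_n+\phi_n)$ and look for a critical point by degree theory. Instead it projects the equation directly onto the approximate kernel, reduces~\eqref{nnj3} to the explicit system \eqref{j3e}--\eqref{j4e} (via the derivative expansions of Lemmas~\ref{lemma:2.4}--\ref{lemma:2.5}), and then solves that nonlinear algebraic system with Lemma~\ref{l1-13-9}, which gives uniqueness and an $n$-independent invertibility constant for the linearized system $a_j^\beta-\sum_i d_{ji}a_i=0$. This is a fixed-point argument in a very concrete coordinate system, and it is what makes the uniformity in $n$ clean.

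Your proposed energy expansion is wrong in a way that matters. From Lemma~\ref{lemma:2.1} the dominant $\mu_i^{-\beta}$ contribution to the reduced energy is $-c\,\bigl(\sum_\alpha a_\alpha\bigr)\mu_i^{-\beta}$, a constant coefficient coming from $\int|z_\alpha|^\beta U_{0,1}^{m^*}\,dz$; the $(x_{i}-P_{i})$-dependence enters only at the order $\mu_i^{-(\beta-2)}|x_i-P_i|^2$ (see~\eqref{lemma:2.1e1} and the resulting~\eqref{j3e}, which yields $x_j-P_j=O(\bar\mu^{-2})$). Your term $-\frac{B_0}{\mu_i^\beta}\sum_\alpha a_\alpha|x_{i,\alpha}-P_{i,\alpha}|^\beta$ conflates the two and is far too small: with $|x_i-P_i|=O(\mu^{-2})$ it is of size $\mu^{-\beta-2\beta}$, whereas the interaction term $\sum_{i\ne j}(\mu_i\mu_j)^{-(N-2m)/2}|x_i-x_j|^{-(N-2m)}$ is of size $(\mu L)^{-(N-2m)}\sim\mu^{-\beta}$ at the equilibrium scale. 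In particular your assertion that the interaction term is of ``strictly lower order than the local term'' is also incorrect: the whole point of the scaling $\mu_i\sim L^{(N-2m)/(\beta-N+2m)}$ is that the $\mu_j^{-\beta}$ term and the bubble-interaction term are of the \emph{same} order, and the finite-dimensional problem consists precisely in balancing them (that is~\eqref{j4e}). With the expansion as you wrote it, the $x$-equation cannot be solved, and the claimed ``standard min-max/degree argument'' is not substantiated; you would need to establish an $n$-independent invertibility statement for the linearization of the balance system, which is exactly what Lemma~\ref{l1-13-9} provides and which your sketch leaves unproved.

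A variational route is not impossible, but you would first need to correct the expansion to the form $F_n\approx nA_0+\sum_i\bigl(-c_1(\sum_\alpha a_\alpha)\mu_i^{-\beta}-c_2\mu_i^{-(\beta-2)}\sum_\alpha a_\alpha|x_{i,\alpha}-P_{i,\alpha}|^2\bigr)-c_3\sum_{i\ne j}(\mu_i\mu_j)^{-(N-2m)/2}|x_i-x_j|^{-(N-2m)}+\cdots$ and then prove a uniform-in-$n$ nondegeneracy for the resulting $((N+1)n)$-dimensional problem. The paper's algebraic Lemma~\ref{l1-13-9} is the crisp replacement for that step, and without an analogue of it the proposal is incomplete.
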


\vskip8pt

\subsection{Linearization and finite dimensional reduction}

{For $i=1,\cdots,n$, let $x_i\in B_{1}(P_i),\,\,\mu_i>0$. Set $\mathbf {x}=(x_1,\cdots,x_n)$ and $\mathbf {\mu}=(\mu_1,\cdots,\mu_n)$ satisfying $0<\mu_0<\mu_i/\mu_j<\mu'_0<+\infty$.  For simplicity,  we denote
  {$U_{x_{i}, \mu_{i}}(x)$} by $U_i(x), $  $i=1, 2, \cdots,n.$  Let
\[
Z_{i,j}=\xi(y-x_{i})\frac{\partial U_{i}}{\partial
x_{i,j}},\;\;
Z_{i,N+1}=\xi(y-x_{i})\frac{\partial U_{i}}{\partial \mu_{i}},\quad j=1,\cdots,N,\ \ i=1, \cdots,n,
\]
where $\xi\in C_0^\infty(B_2(0))$ satisfying
$\xi(y)=\xi(|y|)$,   $\xi=1$ in $B_1(0)$, and $\xi=0$ in $\mathbb R^N\setminus B_2(0)$.
The purpose of  using  the cut-off function $\xi$ above is just to  make the calculations simpler.

We define the function spaces $\mathbf{X}$ and
    $\mathbf{Y}$ as follows:   $\phi\in \mathbf{X}$ if  $\|\phi\|_*<+\infty$, while $  f\in \mathbf{Y}$ if
    $\|f\|_{**}<+\infty$.
Set
\begin{equation}\label{100-15-1}
\mathbf{H}_n:=\Big\{\phi:\;\; \phi\in\mathbf {X},\;\int_{\mathbb{R}^N}\phi U_i^{\frac{4m}{N-2m}}Z_{i, j}=0, \;\; i=1, \cdots,n,\; j=1, \cdots, N
+1\Big\}.
\end{equation}

 Let
 {$W_{n}(y)=\sum_{i=1}^{n }U_{i}(y),$}   $\phi\in \mathbf{H}_n$.
 We want to find a  solution of the form  $W_{n}(x)+\phi(x)$
 for $(P)$ with $\|\phi\|_*$ small. To achieve this goal,  we first prove that for fixed $({\bf x, \mu}),$
 there exists a smooth function $\phi\in \mathbf{H}_n$, such that
\begin{equation}\label{j3}
(-\Delta )^m  (W_n(x)+\phi(x))- K(y) (W_n(x)+\phi(x))_+^{m^*-1}
=\sum_{i=1}^n\sum_{j=1}^{N+1}c_{ij} U_i^{m^*-2} Z_{i,j}
\end{equation}
for some constants $c_{ij}$.  Then, we show the existence of $({\bf x, \mu}),$ such that
\begin{equation}\label{nnj3}
\int_{\mathbb R^N} (-\Delta )^m (W_n(x)+\phi(x)) Z_{i,j}-\int_{\mathbb R^N} K(y) (W_n(x)+\phi(x))_+^{m^*-1}
 Z_{i,j}
=0.
\end{equation}
With this $({\bf x, \mu}),$   it is easy to prove that all  $c_{ij}$ must be zero.

\vskip8pt

 {\bf Part I: The Reduction.}  In this part, for fixed {$({\bf x, \mu}),$}  we find $\phi({\bf x,\mu})$,
such that $||\phi||_{*}$  is $C^1$ in $({\bf x,\mu})$ and \eqref{j3}  holds.
In fact, we will use the contraction mapping theorem to prove the
following result.

\begin{proposition}\label{pro:1} Under the assumptions of Theorem 1.1. If $L>0$
 is sufficiently large,  \eqref{j3} admits a unique solution $\phi=\phi({\bf x, \mu})$ in $\mathbf{H}_n$ such that
$
||\phi||_{*}\leq \frac{C}{\bar \mu^{\min(\frac{N+2m}{2}-\tau, \beta-\tau+1)}},
$
where $\bar \mu =\min (\mu_1, \cdots, \mu_n)$.  Moreover $||\phi||_{*}$ is  $C^1$ in $({\bf x, \mu}).$
\end{proposition}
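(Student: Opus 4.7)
The plan is to run a standard Lyapunov--Schmidt reduction in the weighted norms \eqref{100-16-1}, \eqref{101-16-1}, but with care that all constants are independent of $n$ (so that we can afterwards pass to $n=\infty$ in Theorem~\ref{th2-7-1}). First, rewrite \eqref{j3} as
\[
L_n\phi = -l_n - N_n(\phi) + \sum_{i,j} c_{ij}U_i^{m^*-2}Z_{i,j},
\]
where the linear operator is $L_n\phi := (-\Delta)^m\phi - (m^*-1)K(y)W_n^{m^*-2}\phi$, the error is $l_n := (-\Delta)^m W_n - K(y)W_n^{m^*-1} = \sum_i U_i^{m^*-1}-K(y)W_n^{m^*-1}$, and the nonlinear remainder is $N_n(\phi) := K(y)\bigl[(W_n+\phi)_+^{m^*-1}-W_n^{m^*-1}-(m^*-1)W_n^{m^*-2}\phi\bigr]$. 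A standard projection then recasts the problem as a fixed point $\phi = A_n(\phi) := -L_n^{-1}\bigl(\Pi_n(l_n+N_n(\phi))\bigr)$ in a ball of $\mathbf{H}_n$.

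The main obstacle, and the only genuinely delicate step, is a uniform invertibility bound of the form
\[
\|L_n\phi\|_{**} \ge c\,\|\phi\|_*\qquad\text{for all }\phi\in\mathbf{H}_n,
\]
with $c>0$ independent of $n$ and $L$. I would argue by contradiction: suppose there exist sequences $n_\ell,L_\ell,\phi_\ell$ with $\|\phi_\ell\|_*=1$ and $\|L_{n_\ell}\phi_\ell\|_{**}\to 0$. Around each bubble center $x_{i,L}$, rescale by setting $\tilde\phi_\ell(z)=\mu_{i,L}^{-(N-2m)/2}\phi_\ell(x_{i,L}+z/\mu_{i,L})$; the limit $\tilde\phi$ satisfies the linearized equation $(-\Delta)^m\tilde\phi = (m^*-1)U_{0,1}^{m^*-2}\tilde\phi$ on $\mathbb R^N$, is bounded, and (from the orthogonality in \eqref{100-15-1}) is $L^2$-orthogonal to $\partial_{x_j}U_{0,1}$ and $\partial_\mu U_{0,1}$; by the non-degeneracy result in \cite{wx} this forces $\tilde\phi\equiv 0$. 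The remaining issue, which is where the $k$-dimensional cluster makes the problem harder than in \cite{WY,DLY}, is to upgrade this local vanishing to a contradiction with $\|\phi_\ell\|_*=1$. For this I would evaluate $L_n\phi$ pointwise: writing $\phi=O_*(1)\,h(y)$ with $h(y):=\sigma(y)\sum_j\mu_{j,L}^{(N-2m)/2}(1+\mu_{j,L}|y-x_{j,L}|)^{-(N-2m)/2-\tau}$, inspect the regions near each $x_{i,L}$ and the far region. The choice $\tau>k$ together with the weight $\sigma$ is exactly what is needed so that $\sum_{j\ne i}(1+\mu|y-x_{j,L}|)^{-(N-2m)/2-\tau}$ remains summable uniformly in $i$, via \eqref{1-6-1}; this lets us absorb cross-terms and yields, after the rescaling and the vanishing of $\tilde\phi$ near each $x_{i,L}$, the desired contradiction $\|\phi_\ell\|_*=o(1)$.

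Granting the invertibility, two integral estimates remain. The error bound
\[
\|l_n\|_{**} \le \frac{C}{\bar\mu^{\min(\frac{N+2m}{2}-\tau,\ \beta-\tau+1)}}
\]
is obtained by splitting $B_\delta(x_{i,L})$ and its complement: inside, use the local expansion $K(y)=K(0)+\sum a_i|y_i|^\beta+R$ from $(A_3)$ together with the algebraic identity $(\sum_i U_i)^{m^*-1}=\sum_i U_i^{m^*-1}+O(\sum_{i\ne j}U_i^{m^*-2}U_j)$, and estimate each of these contributions against the weight in \eqref{101-16-1}; outside, the bubble tails are small. The nonlinear estimate is the standard
\[
\|N_n(\phi)\|_{**}\le C\bigl(\|\phi\|_*^{\min(2,m^*-1)} + \|\phi\|_*^{m^*-1}\bigr),
\]
via pointwise Taylor expansion (with separate cases for $N-2m<2m$ and $N-2m\ge 2m$). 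Both estimates use summability through $\tau>k$, \eqref{1-6-1}, and the ratio condition $\mu_0\le \mu_{i,L}/\mu_{j,L}\le\mu'_0$.

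Combining the three ingredients, $A_n$ is a contraction on the ball $\{\phi\in\mathbf{H}_n:\|\phi\|_*\le C\bar\mu^{-\min(\frac{N+2m}{2}-\tau,\,\beta-\tau+1)}\}$ for $L$ sufficiently large, uniformly in $n$; the Banach fixed point theorem gives the unique $\phi=\phi(\mathbf{x},\mathbf{\mu})$ with the stated bound. The $C^1$-dependence on $(\mathbf{x},\mathbf{\mu})$ follows from the implicit function theorem applied to the $C^1$ map $(\phi,\mathbf{x},\mathbf{\mu})\mapsto L_n\phi+\Pi_n(l_n+N_n(\phi))$, once one verifies that $\partial_{\mathbf{x}}l_n$, $\partial_{\mathbf{\mu}}l_n$, $\partial_{\mathbf{x}}N_n$, $\partial_{\mathbf{\mu}}N_n$ satisfy the same type of $\|\cdot\|_{**}$-bounds (differentiation in $\mathbf{x}$ costs a factor $\mu$ and in $\mathbf{\mu}$ a factor $\mu^{-1}$, both controllable in the weighted norm).
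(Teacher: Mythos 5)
Your proposal follows essentially the same route as the paper: the invertibility estimate via the integral (Green's function) representation and a blow-up/non-degeneracy contradiction argument controlled by the weighted norms (this is the content of Lemma~\ref{l1-8-1} together with the key Lemma~\ref{lem:4}, finished "as in \cite{WY}"); the error estimate of Lemma~\ref{l-2-5-3}; the nonlinear estimate of Lemma~\ref{l-1-5-3}; the contraction mapping; and the implicit function theorem for the $C^1$ dependence. You also correctly identify that the weight $\sigma$, the choice $\tau>k$, and condition \eqref{1-6-1} are exactly what make the cross-bubble sums uniformly summable in $n$, which is the technical novelty here over the one-dimensional cluster case.
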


 Firstly, we consider the following linear problem:
\begin{equation}\label{1-8-1}
(-\Delta)^m \phi
-(m^*-1)K(y)W_n^{m^*-2}\phi=h+\sum_{i=1}^n\sum_{j=1}^{N+1}c_{ij} U_i^{m^*-2} Z_{i,j},
\end{equation}
for some constants $c_{ij}$, where $h$ is a function in $\mathbf{Y}$.

\vskip8pt

\begin{lemma}\label{l1-8-1}
 Suppose that  $\phi$ solves   \eqref{1-8-1}.  Then $\|\phi\|_{*}\le C \|h\|_{**}$, for some
 constant $C>0$, independent of $n$.
\end{lemma}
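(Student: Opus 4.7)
The plan is a proof by contradiction combined with a standard blow-up analysis, adapted to the polyharmonic setting and to the infinite collection of bubbles. Assume the estimate fails: then there exist sequences $L_\ell\to\infty$, configurations $({\mathbf x}_\ell,{\mathbf\mu}_\ell)$ with $\bar\mu_\ell:=\min_i\mu_{i,\ell}\to\infty$, functions $\phi_\ell\in\mathbf{H}_{n_\ell}$ and $h_\ell\in\mathbf{Y}$, together with multipliers $c_{ij}^{(\ell)}$, solving \eqref{1-8-1} with
$$\|\phi_\ell\|_*=1,\qquad \|h_\ell\|_{**}\to 0.$$
The goal is to show $\|\phi_\ell\|_*=o(1)$, which produces the contradiction.

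First I would handle the Lagrange multipliers $c_{ij}^{(\ell)}$. Pairing \eqref{1-8-1} with $Z_{k,l}$, integrating over $\mathbb R^N$, and invoking the orthogonality condition $\int\phi_\ell U_k^{m^*-2}Z_{k,l}=0$ together with the approximate kernel identity $(-\Delta)^m Z_{k,l}-(m^*-1)U_k^{m^*-2}Z_{k,l}=O(\text{cut-off error})$ produces a linear system for $(c_{ij}^{(\ell)})$ whose diagonal is nondegenerate (up to the explicit scaling of each Jacobi field) and whose off-diagonal entries are small by the interaction bounds from Appendix A combined with the summability \eqref{1-6-1}. Solving the system yields
$$|c_{ij}^{(\ell)}|\le C\mu_{i,\ell}^{-\alpha_j}\bigl(\|h_\ell\|_{**}+o_\ell(1)\|\phi_\ell\|_*\bigr),$$
for explicit exponents $\alpha_j$ coming from the normalization of $Z_{k,l}$.

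Next I would convert \eqref{1-8-1} into a pointwise integral representation via the Riesz kernel $G(y,z)=C_{N,m}|y-z|^{2m-N}$ of $(-\Delta)^m$, so that $\phi_\ell$ equals the convolution of $G$ with the right hand side of \eqref{1-8-1}. Applying the weighted convolution estimates of Appendix A to each term (the potential term $(m^*-1)KW_n^{m^*-2}\phi$ on one hand, and the data $h+\sum c_{ij}U_i^{m^*-2}Z_{i,j}$ on the other), together with the key property that the weight $\sigma$ with $\tau>k$ is stable under $(-\Delta)^{-m}$ composed with multiplication by $U_i^{m^*-2}$ when the $x_{i,\ell}$ satisfy \eqref{1-6-1}, gives the pointwise bound
$$|\phi_\ell(y)|\le\bigl(o_\ell(1)+\varepsilon\bigr)\|\phi_\ell\|_*\,\sigma(y)\sum_j\frac{\mu_{j,\ell}^{(N-2m)/2}}{(1+\mu_{j,\ell}|y-x_{j,\ell}|)^{(N-2m)/2+\tau}}+C\|h_\ell\|_{**}\,\sigma(y)\sum_j\frac{\mu_{j,\ell}^{(N-2m)/2}}{(1+\mu_{j,\ell}|y-x_{j,\ell}|)^{(N-2m)/2+\tau}},$$
where $\varepsilon$ comes from comparing $W_n^{m^*-2}$ with the dominant $U_i^{m^*-2}$ on small balls and can be made arbitrarily small by choosing $R$ large at the ball separation step. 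Dividing by the weight that defines $\|\cdot\|_*$, one obtains $\|\phi_\ell\|_*\le o_\ell(1)\|\phi_\ell\|_*+C\|h_\ell\|_{**}$ unless $\phi_\ell$ still concentrates at some bubble. A standard blow-up rescaling around the index $i_\ell$ at which the norm is essentially attained, $\tilde\phi_\ell(y):=\mu_{i_\ell,\ell}^{-(N-2m)/2}\phi_\ell(x_{i_\ell,\ell}+y/\mu_{i_\ell,\ell})$, then yields in the limit, via elliptic regularity for the $2m$-th order operator, a bounded solution $\tilde\phi$ of
$$(-\Delta)^m\tilde\phi=(m^*-1)U_{0,1}^{m^*-2}\tilde\phi\qquad\text{in }\mathbb R^N,$$
decaying at the rate $(1+|y|)^{-(N-2m)/2-\tau}$ and orthogonal to all $\partial_{x_j}U_{0,1}$ and $\partial_\mu U_{0,1}$. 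The nondegeneracy result from \cite{wx} forces $\tilde\phi\equiv 0$, contradicting that $\|\tilde\phi_\ell\|_{L^\infty}$ stays bounded below on a fixed ball.

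The main obstacle is the weighted convolution step. Since the bubbles live on a $k$-dimensional lattice with $k$ possibly close to $(N-2m)/2$, the unweighted sum $\sum_j(1+\mu_j|y-x_j|)^{-(N-2m)/2-\tau}$ already requires $\tau>k$ to converge, and for $y$ far from every bubble the weight $\sigma$ must supply additional decay so that the convolution with $|y-z|^{2m-N}$ does not diverge. Verifying the weighted convolution estimate with constants independent of $n$ under the weak hypothesis \eqref{1-6-1}, rather than assuming $\{\tilde P_j\}=Q_k$, is the technical heart of the lemma; this is exactly where the role of $\sigma$ and of the gap between $\tau$ and $k$ is delicate.
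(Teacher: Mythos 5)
Your proof follows essentially the same route as the paper's: the paper also passes to the integral representation with the Riesz kernel, estimates the convolution terms via the weighted estimates of Appendix~A (Lemmas~\ref{lem:2} and \ref{lem:4}), bounds the multipliers $c_{ij}$ by testing \eqref{1-8-1} against $Z_{i,j}$ to arrive at \eqref{3-8-1}, combines these into the pointwise bound \eqref{2.93}, and then finishes by the blow-up/nondegeneracy argument, which the paper only references (``as in \cite{WY}'') and which you spell out. Your identification of the role of the weight $\sigma$ and the condition $\tau>k$, with constants uniform in $n$, as the technical heart is accurate and matches the paper's reliance on Lemma~\ref{lem:4}.
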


\begin{proof}
We can write
\begin{equation}\label{2-8-1}
\begin{split}
\phi(y)=&
(m^*-1)\int_{\mathbb R^N}\frac{C_m}{|z-y|^{N-2m}} K(z)W_n^{m^*-2}(z)\phi(z)\,dz\\
&+\int_{\mathbb R^N}\frac{C_m}{|z-y|^{N-2m}}\Bigl(h(z)+\sum_{i=1}^n\sum_{j=1}^{N+1}c_{ij} U_i^{m^*-2}(z) Z_{i,j}(z)\Bigr)dz.
\end{split}
\end{equation}

 Using Lemma~\ref{lem:2}, we have
\begin{equation}\label{2.82}
\begin{split}
\quad \int_{\mathbb R^N}\frac{C_m |h(z)|}{|z-y|^{N-2m}}\,dz&\le C\|h\|_{**} \int_{\mathbb{R}^N}\frac{\sigma(z)}{|y-z|^{N-2m}}\sum\limits_{j}\frac{
\mu_j^{ \frac{N+2m}2 }}{(1+\mu_j|z-x_j|)^{\frac{N+2m}{2}+\tau}}dz
\\ &
\leq C\|h\|_{**}\sigma(y)\sum\limits_{j}\frac{\mu_j^{ \frac{N-2m}2 }}{(1+\mu_j|y-x_j|)^{\frac{N-2m}{2}+\tau}},
\end{split}
\end{equation}
and
\begin{equation}\label{2.83}
\begin{aligned}
\quad\Bigl|\int_{\mathbb{R}^N}\frac{U_i^{m^*-2}Z_{i,j}}{|y-z|^{N-2m}}dz\Bigr|&\leq C\int_{\mathbb{R}^N}\frac{1}{|y-z|^{N-2m}}\frac{\mu_i^{\alpha(j)+ \frac{N+2m}2}}{(1+\mu_i |z-x_i|)^{N+2m}}dz\\
&\leq \frac{C\mu_i^{\alpha(j)+ \frac{N-2m}2}}{(1+\mu_i|y-x_i|)^{\frac{N-2m}{2}+\tau}},
\end{aligned}
\end{equation}
where $\alpha(j)=1$, $j=1,\cdots, N$, $\alpha(N+1)=-1$.

\vskip8pt

 To estimate $c_{ij}$, we use \eqref{1-8-1}  to find
\begin{equation}\label{10-14-1}
-c_{ij} \int_{\mathbb R^N} U_i^{m^*-2} Z^2_{ij}=
\int_{\mathbb R^N}\bigl((m^*-1)K(y)W_n^{m^*-2} \phi +h\bigr) Z_{i,j}.
\end{equation}
But
\begin{equation}\label{11-14-1}
\Bigl|
\int_{\mathbb R^N} h Z_{i,j}\Bigr|\le C\mu_i^{\alpha(i)}\|h\|_{**},
\end{equation}
and (see \eqref{7-2-9})
\begin{equation}\label{12-14-1}
\begin{split}
&\int_{\mathbb R^N}K(y)W_n^{m^*-2} \phi  Z_{i,j}\\
=&\int_{\mathbb R^N}K(y)\bigl(W_n^{m^*-2}- U_i^{m^*-2}\bigr) \phi  Z_{i,j}+\int_{\mathbb R^N}\bigl(K(y)-1) U_i^{m^*-2} \phi  Z_{i,j}+\int_{\mathbb R^N} U_i^{m^*-2} \phi  Z_{i,j}\\
=& \mu_i^{\alpha(i)} \|\phi\|_*O\Bigl( \frac1{\mu_i^{\min(\frac{N+2m}2-\tau, \beta)}} +
\frac1{\mu_i^{N-2m}L^{N-2m}} \Bigr),
\end{split}
\end{equation}
which, together with \eqref{10-14-1} and \eqref{11-14-1}, gives
\begin{equation}\label{3-8-1}
|c_{ij}|\leq C\bigl(||h||_{**}+o(1)||\phi||_{*}\bigr)\mu_i^{-\alpha(j)}.
\end{equation}
Combining Lemma~\ref{lem:4},  and \eqref{2.82}--\eqref{3-8-1}, we are led to
\begin{equation}\label{2.93}
\begin{aligned}
&\quad|\phi(y)|\Bigl(\sigma(y)\sum\limits_{i}
\frac{\mu_i^{\frac{N-2m}2}}{(1+\mu_i|y-x_i|)^{\frac{N-2m}{2}+\tau}}\Bigr)^{-1}\\
&\leq
C\Bigl(||h||_{**}+o(1)||\phi||_{*}
+\frac{ \sum\limits_{j}\frac{1}{(1+\mu_j|z-x_j|)
^{\frac{N-2m}{2}+\tau+\tilde \theta}}  }
{ \sum\limits_{j}\frac{1}{(1+\mu_j|z-x_j|)
^{\frac{N-2m}{2}+\tau}} }  ||\phi||_{*}\Bigr).
\end{aligned}
\end{equation}

 We can finish the proof of this lemma by using \eqref{2.93} as in \cite{WY}.
\end{proof}

\begin{proof}[Proof of Proposition~\ref{pro:1}   ]

Let $P$ be the operator defined as follows:
\[
P f= f+\sum_{i=1}^n\sum_{j=1}^{N+1}c_{ij} U_i^{m^*-2} Z_{i,j},\quad f\in \mathbf{Y},
\]
where $c_{ij}$ are chosen such that $ \int_{\mathbb R^N} Z_{i,j}Pf =0$. Then it is easy to check that
\[
\| P f\|_{**}\le C\| f\|_{**}.
\]
 In view of Lemma~\ref{l1-8-1}, by the Fredholm alternative thoerem, for any $h\in \mathbf Y$, \eqref{1-8-1}
 has a
  unique solution $A h\in \mathbf H_n$.

 Equation~\eqref{j3} is equivalent to
\begin{equation}\label{4-9-1}
\phi = A[P( N(\phi))]+A[P l_L], \quad \phi\in \mathbf H_n,
\end{equation}
where
\begin{equation}\label{2-9-1}
N(\phi)=K(y)\Bigl(\bigl(  W_{n} +\phi\bigr)_+^{m^*-1}-
 W_{n}^{m^*-1}-(m^*-1) W_{n}^{m^*-2}\phi\Bigr),
\end{equation}
and
\begin{equation}\label{3-9-1}
l_L=K(y)
  W_{n}^{m^*-1}-\sum_{j=1}^n U_{j}^{m^*-1}.
\end{equation}
Then, we can use the contraction mapping theorem as in \cite{WY} to prove that for large $L>0$,
\eqref{4-9-1}
has a solution $\phi\in \mathbf H_n$, satisfying
\[
\|\phi\|_{*}\le C \|l_L\|_{**}.
\]
Using Lemma~\ref{l-2-5-3}, we obtain the estimate for $\|\phi\|_{*}$.
\end{proof}

 {\bf Part II: The Finite Dimensional Problems.}   Note that for any $\gamma>1$, we have
$(1+t)_+^\gamma-1-\gamma t = O(t^2)$  for all $t\in \R$ if $\gamma\le 2$; and
$|(1+t)_+^\gamma-1-\gamma t |\le C(t^2+|t|^\gamma)$ for all $t\in\R$ if $\gamma> 2$.
So, we can deduce
\begin{equation}\label{1-15-1}
\begin{split}
&\int_{\mathbb R^N} (-\Delta)^m (W_n(x)+\phi(x)) Z_{i,j}-\int_{\mathbb R^N} K(y) (W_n(x)+\phi(x))_+^{m^*-1}
 Z_{i,j}
\\
=&\int_{\mathbb R^N} (-\Delta)^m W_n(x)  Z_{i,j}-\int_{\mathbb R^N} K(y) W_n(x)^{m^*-1}
 Z_{i,j}\\
 &+(m^*-1) \int_{\mathbb R^N} K(y) W_n^{m^*-2}Z_{i,j}\phi+\mu_i^{\alpha(j)}O\Bigl( (\mu_i^{1-\tau}\|\phi\|_{*})^2\Bigr).
\end{split}
\end{equation}

 It follows from Lemmas~\ref{lemma:2.4} and \ref{lemma:2.5}, Proposition~\ref{pro:1}, \eqref{12-14-1}  and \eqref{1-15-1}
 that \eqref{nnj3}
is  equivalent to
\begin{equation}\label{j3e}
x_{j}- P_{ j}= O(\frac{1}{\bar \mu^2}),
\end{equation}
and
\begin{equation}\label{j4e}
\sum_{i\ne j }\frac{C_4}{(\mu_i\mu_j)^{\frac{N-2m}{2}} |P_i-P_{ j}|^{N-2m}}-\frac{C_3}{\mu_j^\beta}=O(\frac{1}{\bar \mu^{\beta+1}}),\quad
j=1, \cdots, n,
\end{equation}
where $\bar \mu=\min_i\mu_i$.

\subsection{Proof of the existence  theorems}

\begin{proof}
[Proof of Theorem~\ref{th2-7-1}]  We need to solve \eqref{j3e} and \eqref{j4e}.
Note that
\[
\frac{1}{ |P_i-P_j|^{N-2m}}=\frac{1}{ |\tilde P_{ i}-\tilde P_{ j}|^{N-2m} L^{N-2m}}:=
\frac{d_{ij}}{  L^{N-2m}},\quad i\ne j.
\]
So, we can use Lemma~\ref{l1-13-9} to obtain the result.

\end{proof}

\begin{proof}
 [Proof of Theorem~\ref{th11}]  The proof of Theorem~\ref{th11} follows from Theorem~\ref{th2-7-1} by a limiting argument, since we can easily shown that for any fixed $L>0$ large, there exists some constant $C=C(L)$, independent of $n$,  such that
\begin{equation}\label{theorem31}
{u}_n(x)\leq C(L),\quad  \forall\; x\in \mathbb{R}^N.
\end{equation}
 By elliptic estimate, for any $R>0,$ there exists a constant $C_2=C_2(L)$ independent of $n,$ such that
$\| u_n(x)\|_{C^{2m}(B_R)}\leq C_2(L)$, $\forall\; n=1,\cdots,$
which implies that (up to a subsequence, still denoted by ${u}_n$)
$u_n\to u \quad \hbox{ in  } C_{loc}^{2m}(\mathbb{R}^N),$
satisfying
$(-\Delta )^m u=K(x) u_+^{\frac{N+2m}{N-2m}}  \hbox{ in } \mathbb{R}^N.$  Noticing that $u$ decays at direction $y_N$, we can deduce from the potential theory for elliptic equations that
$$
u(x)=\int_{\R^N}\frac{K(y)}{|y-x|^{N-2m}}u_+(y)^{\frac{N+2m}{N-2m}},
$$
which also implies  $u>0$.

\end{proof}

\section{Local uniqueness and periodicity}

 In this section, we study the local uniqueness of the bubbling solutions for $(P)$. We assume that conditions
$(A_1)$--$(A_3)$ hold.  Suppose that $u^{(1)}_{L}
$  and
 $u^{(2)}_{L}$ are two sequence of solutions of $(P)$, which
 satisfy \eqref{3-6-1}, \eqref{2-6-1}  and \eqref{2-1-9}.
 We will prove that
$u^{(1)}_{ L}= u^{(2)}_{ L}$ provided $L>0$ is large enough.

%

\subsection{Pohozaev type identities}

 Suppose that $u$ and $v$ are two  smooth functions in a given bounded domain $\Omega$.  In the section,
we study the following two bi-linear functionals:
\begin{eqnarray}
 L_{1, i}(u, v)&=& \int_\Omega \bigl( (-\Delta)^m u \frac{\partial v}{\partial y_i}+ (-\Delta)^m v \frac{\partial u}{\partial y_i}\bigr), \quad i=1, 2, ..., N, \label{1-13-10}\\
 L_2(u, v)&=& \int_\Omega \bigl( (-\Delta)^m u \bigl\langle y-x, \nabla v\bigr\rangle+ (-\Delta)^m  v
 \bigl\langle y-x, \nabla u\bigr\rangle\bigr). \label{2-13-10}
\end{eqnarray}

\begin{proposition}\label{p1-13-10}
For any integer $m>0$, there exists a function $f_{m, i}(u, v)$,
   such that
\begin{equation}\label{3-13-10}
 L_{1, i}(u, v)= \int_{\partial \Omega } f_{m, i}(u, v).
\end{equation}
Moreover, $f_{m, i}(u, v)$ has the following form:
\begin{equation}\label{300-13-10}
f_{m, i}(u, v)= \sum_{j=1}^{2m-1} l_{j,i}( \nabla^j u,  \nabla^{2m-j }v),
\end{equation}
where $l_{j,i}( \nabla^j u,  \nabla^{2m-j }v)$ is bi-linear in $\nabla^j u$ and $\nabla^{2m-j} v$.
\end{proposition}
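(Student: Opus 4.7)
The plan is to integrate by parts repeatedly, exploiting that $\partial_i$ commutes with $(-\Delta)$ so that Laplacians can be freely redistributed between the two factors, and then to use the $u\leftrightarrow v$ symmetry built into $L_{1,i}$ to turn the residual bulk into a total $\partial_i$-derivative which the divergence theorem sends to $\partial\Omega$.

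Concretely, I would begin with $\int_\Omega (-\Delta)^m u\cdot\partial_i v$ and apply the Green identity
\begin{equation*}
\int_\Omega (-\Delta w)\,\phi = \int_\Omega w\,(-\Delta \phi) + \int_{\partial\Omega}\bigl(\phi\,\partial_\nu w - w\,\partial_\nu\phi\bigr),
\end{equation*}
with $w=(-\Delta)^{m-1}u$ and $\phi=\partial_i v$, using $(-\Delta)\partial_i v=\partial_i(-\Delta v)$. Iterating this $p:=\lfloor m/2\rfloor$ times moves half of the Laplacians onto the $v$-factor. At each step $k=0,\dots,p-1$ two boundary contributions appear, bilinear respectively in $(\nabla^{2m-2k-1}u,\nabla^{2k+1}v)$ and $(\nabla^{2m-2k-2}u,\nabla^{2k+2}v)$; these already fit the form $l_{j,i}$ with $j\in\{m,m+1,\dots,2m-1\}$ when $m$ is even, or $\{m+1,\dots,2m-1\}$ when $m$ is odd.

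For $m=2p$ the residual bulk is $\int_\Omega (-\Delta)^p u\cdot\partial_i(-\Delta)^p v$; added to the symmetric contribution arising from $\int_\Omega (-\Delta)^m v\,\partial_i u$ it becomes $\int_\Omega\partial_i\!\bigl[(-\Delta)^p u\cdot (-\Delta)^p v\bigr]$, which the divergence theorem converts to $\int_{\partial\Omega}(-\Delta)^p u\cdot(-\Delta)^p v\,\nu_i$, a boundary term with $j=m$. For $m=2p+1$ one more integration by parts is needed, but this time in the asymmetric form $\int_\Omega (-\Delta w)\phi=\int_\Omega \nabla w\cdot\nabla\phi-\int_{\partial\Omega}\partial_\nu w\cdot\phi$; after this extra step and symmetrization the bulk turns into $\int_\Omega\partial_i\!\bigl[\nabla(-\Delta)^p u\cdot\nabla(-\Delta)^p v\bigr]=\int_{\partial\Omega}\nabla(-\Delta)^p u\cdot\nabla(-\Delta)^p v\,\nu_i$, and the extra boundary term produced, $-\int_{\partial\Omega}\partial_\nu(-\Delta)^p u\cdot\partial_i(-\Delta)^p v$, is again bilinear of the required type with $j=m$.

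The main bookkeeping obstacle is to verify that the index $j$ never escapes the range $\{1,\dots,2m-1\}$: this forces me to stop the IBP chain at exactly $\lfloor m/2\rfloor$ steps, because any further push would leave either $u$ or $v$ carrying zero derivatives and produce the forbidden $j=0$ or $j=2m$. By the explicit split above, the chain starting from $(-\Delta)^m u\cdot\partial_i v$ contributes $j\in\{m,\dots,2m-1\}$, while the parallel chain starting from $(-\Delta)^m v\cdot\partial_i u$ (by $u\leftrightarrow v$ symmetry) contributes $j\in\{1,\dots,m\}$, and the final total-derivative boundary term lands at $j=m$. Together they cover $\{1,2,\dots,2m-1\}$, yielding the representation \eqref{300-13-10}. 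Since every integration by parts used is linear in each of $u$ and $v$ separately, bi-linearity of each $l_{j,i}$ in $(\nabla^j u,\nabla^{2m-j}v)$ is automatic.
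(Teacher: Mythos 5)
Your argument is correct, and it proves exactly the same identity, but it organizes the integrations by parts differently from the paper. The paper proceeds by a double induction on $m$: a single application of Green's identity in the symmetric form \eqref{5-13-10} produces boundary contributions at $j\in\{1,2,2m-2,2m-1\}$ and leaves behind the bulk integral $L_{1,i}(-\Delta u,-\Delta v)$, i.e.\ the same functional at level $m-2$ applied to $u_1=-\Delta u$, $v_1=-\Delta v$ (see \eqref{6-13-10}); the cases $m=1$ and $m=2$ are worked out explicitly as base cases in \eqref{4-13-10} and \eqref{n5-13-10}, and the inductive step then shifts the index range by $2$. You instead unroll the recursion: you iterate Green's identity $\lfloor m/2\rfloor$ times on the $(-\Delta)^m u\cdot\partial_i v$ branch alone, track the resulting $j$-window $\{2m-2\lfloor m/2\rfloor,\dots,2m-1\}$, obtain the complementary window $\{1,\dots\}$ from the $u\leftrightarrow v$ twin, and close the gap at $j=m$ by symmetrizing the residual bulk into a total $\partial_i$-derivative (with one extra asymmetric IBP step when $m$ is odd). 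The two routes perform the same elementary operations; the paper's induction avoids the parity case analysis and delegates the bookkeeping to the hypothesis at $m-2$, while your version gives a fully explicit inventory of which $j$-values each step contributes, at the cost of treating $m$ even and $m$ odd separately. Both establish \eqref{300-13-10} and the bi-linearity claim, and both produce an $f_{m,i}$ that is compatible with Remark~\ref{re1-14-10}.
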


\begin{proof}
For $m=1$, we use the integration by parts to find
\begin{equation}\label{4-13-10}
\begin{split}
 L_{1, i}(u, v)= &-\int_{\partial \Omega } \bigl(\frac{\partial u}{\partial \nu}\frac{\partial v}{\partial y_i}+
 \frac{\partial v}{\partial \nu}\frac{\partial u}{\partial y_i}\bigr)+ \int_{ \Omega } \bigl(\frac{\partial u}{\partial y_j}\frac{\partial^2 v}{\partial y_j \partial y_i}+
 \frac{\partial v}{\partial y_j}\frac{\partial^2 u}{\partial y_j \partial y_i}\bigr)\\
 =&-\int_{\partial \Omega } \bigl(\frac{\partial u}{\partial \nu}\frac{\partial v}{\partial y_i}+
 \frac{\partial v}{\partial \nu}\frac{\partial u}{\partial y_i}\bigr)+
 \int_{\partial \Omega }\bigl\langle \nabla u, \nabla v\bigr\rangle \nu_i.
 \end{split}
\end{equation}

 For any integer $m>1$, we have
\begin{equation}\label{5-13-10}
\begin{split}
& \int_\Omega \bigl( (-\Delta)^m  u\frac{\partial v}{\partial y_i}+ (-\Delta)^m  v\frac{\partial u}{\partial y_i}\bigr)\\
=& -\int_{\partial \Omega } \bigl(\frac{\partial (-\Delta)^{m-1} u}{\partial \nu}\frac{\partial v}{\partial y_i}+
 \frac{\partial  (-\Delta)^{m-1} v}{\partial \nu}\frac{\partial u}{\partial y_i}\bigr)\\
 &+ \int_{ \Omega } \bigl(\frac{\partial  (-\Delta)^{m-1} u}{\partial y_j}\frac{\partial^2 v}{\partial y_j \partial y_i}+
 \frac{\partial  (-\Delta)^{m-1} v}{\partial y_j}\frac{\partial^2 u}{\partial y_j \partial y_i}\bigr)\\
 =&-\int_{\partial \Omega } \bigl(\frac{\partial (-\Delta)^{m-1} u}{\partial \nu}\frac{\partial v}{\partial y_i}+
 \frac{\partial  (-\Delta)^{m-1} v}{\partial \nu}\frac{\partial u}{\partial y_i}\bigr)\\
 &+ \int_{ \partial\Omega } \bigl( (-\Delta)^{m-1} u\bigl\langle \nu , \nabla\frac{\partial v}{\partial y_i}
 \bigr\rangle +
 (-\Delta)^{m-1} v\bigl\langle \nu , \nabla\frac{\partial u}{\partial y_i}
 \bigr\rangle\bigr) \\
 &-\int_{ \Omega } \bigl( (-\Delta)^{m-1} u\frac{\partial\Delta  v}{ \partial y_i}+
 (-\Delta)^{m-1} v\frac{\partial\Delta  u}{ \partial y_i}\bigr).
  \end{split}
\end{equation}
If $m=2$, then the last term in \eqref{5-13-10} becomes
\[
-\int_{ \Omega } \bigl( (-\Delta u)\frac{\partial\Delta  v}{ \partial y_i}+
 (-\Delta v)\frac{\partial\Delta  u}{ \partial y_i}\bigr)=\int_{\partial \Omega } \Delta u \Delta v \nu_i,
 \]
which gives
\begin{equation}\label{n5-13-10}
\begin{split}
& \int_\Omega \bigl( (-\Delta)^2 u\frac{\partial v}{\partial y_i}+ (-\Delta)^2  v\frac{\partial u}{\partial y_i}\bigr)\\
 =&-\int_{\partial \Omega } \bigl(\frac{\partial (-\Delta u)}{\partial \nu}\frac{\partial v}{\partial y_i}+
 \frac{\partial  (-\Delta v)}{\partial \nu}\frac{\partial u}{\partial y_i}\bigr)\\
 &+ \int_{ \partial\Omega } \bigl( (-\Delta u)\bigl\langle \nu , \nabla\frac{\partial v}{\partial y_i}
 \bigr\rangle +
 (-\Delta v)\bigl\langle \nu , \nabla\frac{\partial u}{\partial y_i}
 \bigr\rangle\bigr) +\int_{\partial \Omega } \Delta u \Delta v \nu_i.
  \end{split}
\end{equation}

 We assume that the conclusion holds up to  $m-1$, $m\ge 3$.
Take $u_1=-\Delta u$
   and $v_1=-\Delta v$. Then
 \begin{equation}\label{6-13-10}
-\int_{ \Omega } \bigl( (-\Delta)^{m-1} u\frac{\partial\Delta  v}{ \partial y_i}+
 (-\Delta)^{m-1} v\frac{\partial\Delta  u}{ \partial y_i}\bigr)
  =\int_{ \Omega } \bigl( (-\Delta)^{m-2} u_1\frac{\partial v_1}{ \partial y_i}+
 (-\Delta)^{m-2} v_1\frac{\partial u_1}{ \partial y_i}\bigr).
\end{equation}
Thus, by using the induction assumption, we can conclude  that the result is true for any $m.$
\end{proof}

\begin{remark}\label{re1-14-10}
 From the the proof of Proposition~\ref{p1-13-10}, we can see that if $\Omega$ is a ball centered at
 $x$,  $u$ and $v$ are functions of $|y-x|$, then there exists a function $\tilde f_m(r)$, such that
$ f_{m, i}(u, v)= \tilde f_m(|y-x|) \nu_i. $ As a  result, $\int_{\partial B_d(x)}f_{m, i}(u, v)=0.$
\end{remark}

\begin{proposition}\label{p2-13-10}
 For any integer $m>0$, there exists a function $g_m(u, v)$,
   such that
\begin{equation}\label{7-13-10}
 L_2(u, v)= \int_{\partial \Omega } g_m(u, v) - \frac{N-2m}2\int_{\Omega} \bigl( v (-\Delta )^m u + u (-\Delta )^m v  \bigr).
\end{equation}
Moreover, $g_{m}(u, v)$ has the following form:
\begin{equation}\label{301-13-10}
g_{m}(u, v)= \sum_{j=1}^{2m-1} \bar l_j( y-x, \nabla^j u,  \nabla^{2m-j }v)+ \sum_{j=0}^{2m-1} \tilde l_j( \nabla^j u,  \nabla^{2m-j -1}v),
\end{equation}
where $\bar l_j( y-x, \nabla^j u,  \nabla^{2m-j }v)$ and
$ \tilde l_j( \nabla^j u,  \nabla^{2m-j -1}v)  $ are  linear in each component.
\end{proposition}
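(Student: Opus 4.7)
The plan is to argue by induction on $m$, in direct parallel with the proof of Proposition~\ref{p1-13-10}. The reduction step will decrease $m$ by $2$ (via the substitution $u_1:=-\Delta u$, $v_1:=-\Delta v$), so we treat $m=0$ and $m=1$ as base cases and in the inductive step invoke the hypothesis at level $m-2$.

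The case $m=0$ is immediate: $L_2(u,v) = \int_\Omega\langle y-x,\nabla(uv)\rangle = -N\int_\Omega uv + \int_{\partial\Omega}\langle y-x,\nu\rangle uv$, matching the claim with $g_0(u,v)=\langle y-x,\nu\rangle uv$. For $m=1$ (the classical bilinear Pohozaev identity), I would integrate by parts once in each of the two terms of $L_2(u,v)$, using the product rule $\nabla\langle y-x,\nabla v\rangle = \nabla v + D^2v\,(y-x)$. After symmetrising in $u,v$ and applying
\begin{equation*}
\int_\Omega\langle y-x,\nabla(\nabla u\cdot\nabla v)\rangle = -N\int_\Omega\nabla u\cdot\nabla v + \int_{\partial\Omega}\langle y-x,\nu\rangle\nabla u\cdot\nabla v,
\end{equation*}
one further integration by parts on $\int_\Omega\nabla u\cdot\nabla v$ converts the remaining bulk to $\tfrac12\int_\Omega[u(-\Delta v)+v(-\Delta u)]$ plus boundary contributions, yielding the desired coefficient $-\frac{N-2}{2}$ and a $g_1$ of the stated form.

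For the inductive step $m\ge 2$, I would integrate by parts twice in each term of $L_2(u,v)$ to shift one factor of $-\Delta$ onto the weighting factor $\langle y-x,\nabla v\rangle$ (resp.\ $\langle y-x,\nabla u\rangle$). The essential elementary identity, verified by direct computation from $\partial_k^2((y_i-x_i)\partial_i v)$, is
\begin{equation*}
-\Delta\langle y-x,\nabla v\rangle = 2(-\Delta v)+\langle y-x,\nabla(-\Delta v)\rangle.
\end{equation*}
Substituting this and summing the two integrals produces
\begin{equation*}
L_2(u,v) = 2\int_\Omega\bigl[(-\Delta)^{m-1}u\,(-\Delta v) + (-\Delta)^{m-1}v\,(-\Delta u)\bigr] + L_2(u_1,v_1)_{m-2} + \int_{\partial\Omega}(\cdots),
\end{equation*}
where the last bulk integral is exactly $L_2$ of the proposition, applied at level $m-2$ to $u_1,v_1$, since $(-\Delta)^{m-1}u = (-\Delta)^{m-2}u_1$ and $\nabla v_1=\nabla(-\Delta v)$.

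Applying the induction hypothesis to this $L_2(u_1,v_1)_{m-2}$ term and then invoking $\int_\Omega(-\Delta)^{m-1}u(-\Delta v) = \int_\Omega v(-\Delta)^m u + \text{bdry}$ (with the $u\leftrightarrow v$ twin) reduces every remaining interior integral to a multiple of $\int_\Omega[v(-\Delta)^m u + u(-\Delta)^m v]$. The coefficient arithmetic
\begin{equation*}
2-\frac{N-2(m-2)}{2}=\frac{4-N+2m-4}{2}=-\frac{N-2m}{2}
\end{equation*}
gives exactly the claimed constant. All accumulated boundary integrals, including $\int_{\partial\Omega}g_{m-2}(u_1,v_1)$, are gathered into $g_m(u,v)$; since each integration by parts only produces pieces bilinear in $\nabla^j u,\nabla^{2m-j}v$ carrying a $y-x$ weighting or bilinear in $\nabla^j u,\nabla^{2m-j-1}v$ without weighting, and since $u_1=-\Delta u$ shifts derivative orders by two, every term fits the template (\ref{301-13-10}). \textbf{The main obstacle} will be the careful bookkeeping: verifying that no interior integral of order lower than $2m$ survives after all the reductions, and that each boundary piece (in particular the inductively supplied $g_{m-2}(u_1,v_1)$) can be written in the precise bilinear/weighted form (\ref{301-13-10}). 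I would control this by tracking the $y$-weighted contributions and the unweighted ones separately throughout the reduction.
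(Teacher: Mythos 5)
Your proposal is correct and follows essentially the same strategy as the paper: a two-step induction reducing $L_2(u,v)$ at level $m$ to $L_2(-\Delta u, -\Delta v)$ at level $m-2$, combined with the pointwise commutator identity $(-\Delta)\langle y-x,\nabla v\rangle = 2(-\Delta v)+\langle y-x,\nabla(-\Delta v)\rangle$ to peel off the $2\int_\Omega(-\Delta)^{m-1}u\,(-\Delta v)$ bulk piece, and the same arithmetic $2-\tfrac{N-2(m-2)}{2}=-\tfrac{N-2m}{2}$. The only cosmetic differences are that you anchor the induction at $m=0,1$ whereas the paper starts at $m=1,2$ (note that $g_0=\langle y-x,\nu\rangle uv$ does not literally fit the template \eqref{301-13-10}, which the paper only asserts for $m\ge 1$, though this is harmless since $g_0(-\Delta u,-\Delta v)$ does fit the $m=2$ template), and that you state the commutator identity up front while the paper derives it in-line via the product rule and successive integrations by parts.
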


\begin{proof}

If $m=1$, then using integration by parts, we obtain
\begin{equation}\label{8-13-10}
  \begin{split}
 & \int_\Omega \bigl( (-\Delta u) \bigl\langle y-x, \nabla v\bigr\rangle+ (-\Delta  v)
 \bigl\langle y-x, \nabla u\bigr\rangle\bigr)\\
 =& -\int_{\partial \Omega }\bigl( \frac{\partial u}{\partial \nu}\bigl\langle y-x, \nabla v\bigr\rangle+
 \frac{\partial v}{\partial \nu}\bigl\langle y-x, \nabla u\bigr\rangle\bigr)
 +\int_{\partial \Omega }\langle y-x, \nu\rangle \langle\nabla u, \nabla v\rangle\\
 &-\frac{N-2}2\int_{\partial \Omega }\bigl(\frac{\partial u}{\partial \nu}v+\frac{\partial v}{\partial \nu}u\bigr)
 +\frac{N-2}2\int_\Omega \bigl( v\Delta u + u\Delta v\bigr).
 \end{split}
\end{equation}
For any integer $m>1$, we have
\begin{equation}\label{1-14-10}
  \begin{split}
& \int_\Omega \bigl( (-\Delta)^m u \bigl\langle y-x, \nabla v\bigr\rangle+ (-\Delta)^m  v
 \bigl\langle y-x, \nabla u\bigr\rangle\bigr)\\
 =&-\int_{\partial \Omega }\bigl( \frac{\partial (-\Delta)^{m-1} u}{\partial \nu}\bigl\langle y-x, \nabla v\bigr\rangle+ \frac{\partial (-\Delta)^{m-1} v}{\partial \nu}\bigl\langle y-x, \nabla u\bigr\rangle\bigr)\\
 &+\int_\Omega \bigl( \frac{\partial( (-\Delta)^{m-1} u)}{\partial y_j}
 \frac{\partial v}{\partial y_j}+ \frac{\partial( (-\Delta)^{m-1} v)}{\partial y_j}
 \frac{\partial u}{\partial y_j}
 \bigr)\\
 &+\int_\Omega \bigl( \frac{\partial( (-\Delta)^{m-1} u)}{\partial y_j}   \bigl\langle y-x, \nabla\frac{\partial v}{\partial y_j} \bigr\rangle+
 \frac{\partial( (-\Delta)^{m-1} v)}{\partial y_j}   \bigl\langle y-x, \nabla\frac{\partial u}{\partial y_j} \bigr\rangle
 \bigr)\\
  =&-\int_{\partial \Omega }\bigl( \frac{\partial (-\Delta)^{m-1} u}{\partial \nu}\bigl\langle y-x, \nabla v\bigr\rangle+ \frac{\partial (-\Delta)^{m-1} v}{\partial \nu}\bigl\langle y-x, \nabla u\bigr\rangle\bigr)\\
  &+\int_{\partial \Omega } \bigl( (-\Delta)^{m-1} u  \bigl\langle y-x, \nabla\frac{\partial v}{\partial \nu} \bigr\rangle+
  (-\Delta)^{m-1} v   \bigl\langle y-x, \nabla\frac{\partial u}{\partial \nu} \bigr\rangle
 \bigr)\\
  &+\int_{\partial \Omega } \bigl(  (-\Delta)^{m-1} u
 \frac{\partial v}{\partial \nu}+ (-\Delta)^{m-1} v
 \frac{\partial u}{\partial \nu}
 \bigr)\\
 &- 2\int_\Omega \bigl(  (-\Delta)^{m-1} u
 \Delta  v+ (-\Delta)^{m-1} v
 \Delta  u
 \bigr)\\
 &-\int_\Omega \bigl(  (-\Delta)^{m-1} u  \bigl\langle y-x, \nabla\Delta  v\bigr\rangle+
 (-\Delta)^{m-1} v  \bigl\langle y-x, \nabla\Delta  u\bigr\rangle
 \bigr).
\end{split}
\end{equation}
Letting $m=2$ in \eqref{1-14-10}, we obtain
\begin{equation}\label{3-14-10}
  \begin{split}
& \int_\Omega \bigl( (-\Delta)^2 u \bigl\langle y-x, \nabla v\bigr\rangle+ (-\Delta)^2  v
 \bigl\langle y-x, \nabla u\bigr\rangle\bigr)\\
  =&-\int_{\partial \Omega }\bigl( \frac{\partial (-\Delta u)}{\partial \nu}\bigl\langle y-x, \nabla v\bigr\rangle+ \frac{\partial (-\Delta v)}{\partial \nu}\bigl\langle y-x, \nabla u\bigr\rangle\bigr)\\
  &+\int_{\partial \Omega } \bigl( (-\Delta u)  \bigl\langle y-x, \nabla\frac{\partial v}{\partial \nu} \bigr\rangle+
  (-\Delta v )  \bigl\langle y-x, \nabla\frac{\partial u}{\partial \nu} \bigr\rangle
 \bigr)\\
  &+\int_{\partial \Omega } \bigl(  (-\Delta u)
 \frac{\partial v}{\partial \nu}+ (-\Delta v)
 \frac{\partial u}{\partial \nu}
 \bigr)\\
 &
 +
 \int_{\partial \Omega }  \bigl\langle y-x, \nu\bigr\rangle\Delta u\Delta  v
-\frac{N-4}2\int_{\partial \Omega } \bigl( \frac{\partial u}{\partial \nu}\Delta  v-u \frac{\partial \Delta v}{\partial \nu}+\frac{\partial v}{\partial \nu}\Delta  u-v \frac{\partial \Delta u}{\partial \nu}
 \bigr)
 \\
 &-\frac{N-4}2\int_\Omega\bigl(v (-\Delta)^2 u+
 u(-\Delta )^2 v\bigr),
\end{split}
\end{equation}
since
\[
 \begin{split}
&4\int_\Omega \Delta u
 \Delta  v
 +
 \int_\Omega   \bigl\langle y-x, \nabla(\Delta u\Delta  v)\bigr\rangle=
 \int_{\partial \Omega }  \bigl\langle y-x, \nu\bigr\rangle\Delta u\Delta  v-(N-4)\int_\Omega \Delta u
 \Delta  v\\
 =&\int_{\partial \Omega }  \bigl\langle y-x, \nu\bigr\rangle\Delta u\Delta  v
 -\frac{N-4}2\int_{\partial \Omega } \bigl( \frac{\partial u}{\partial \nu}\Delta  v-u \frac{\partial \Delta v}{\partial \nu}+\frac{\partial v}{\partial \nu}\Delta  u-v \frac{\partial \Delta u}{\partial \nu}
 \bigr)\\
 &-\frac{N-4}2\int_\Omega\bigl(v (-\Delta)^2 u+
 u(-\Delta )^2 v\bigr).
\end{split}
\]

 For any integer $m>2$, we assume that the result is true for any integer up to $m-1$.
 First, we have
\begin{equation}\label{4-14-10}
  \begin{split}
  &- 2\int_\Omega \bigl(  (-\Delta)^{m-1} u
 \Delta  v+ (-\Delta)^{m-1} v
 \Delta  u
 \bigr)\\
 =&-2\int_{\partial \Omega } \bigl(
 (-\Delta)^{m-1} u\frac{\partial v}{\partial \nu}-(-\Delta)^{m-1}\frac{\partial u}{\partial \nu} v+
 (-\Delta)^{m-1} v\frac{\partial u}{\partial \nu}-(-\Delta)^{m-1}\frac{\partial v}{\partial \nu} u
 \bigr)
 \\
 &
 + 2\int_\Omega \bigl(  (-\Delta)^{m} u)
  v+ (-\Delta)^{m} v)
   u
 \bigr).
\end{split}
\end{equation}
Moreover, by the induction assumption, we obtain by using the integration by parts,
\begin{equation}\label{5-14-10}
  \begin{split}
  &-\int_\Omega \bigl(  (-\Delta)^{m-1} u  \bigl\langle y-x, \nabla\Delta  v\bigr\rangle+
 (-\Delta)^{m-1} v  \bigl\langle y-x, \nabla\Delta  u\bigr\rangle
 \bigr)\\
 =&\int_{\partial \Omega } g_{m-2}(-\Delta u, -\Delta
 v) - \frac{N-2(m-2)}2\int_{\Omega} \bigl( (-\Delta v) (-\Delta )^{m-1} u + (-\Delta u) (-\Delta )^{m-1} v  \bigr)\\
 =&
 \int_{\partial \Omega } g_{m}( u,
 v) - \frac{N-2(m-2)}2\int_{\Omega} \bigl(  v (-\Delta )^{m} u +  u (-\Delta )^{m} v  \bigr).
\end{split}
\end{equation}
Hence, the result for any $m$ follows from \eqref{1-14-10}, \eqref{4-14-10} and \eqref{5-14-10}.
\end{proof}

\begin{remark}\label{re1-15-10}
 From \eqref{4-14-10} and \eqref{5-14-10}, we can find the formula for $\tilde l_{2m-1}( \nabla^{2m-1} u,  v) $:
 \begin{equation}\label{88-14-10}
 \tilde l_{2m-1}( \nabla^{2m-1} u,  v)=-\frac{N-2m}2 \int_{\partial \Omega }\frac{\partial (-\Delta )^{m-1} u}{\partial
 \nu} v.
 \end{equation}
 \end{remark}

\subsection{The bubbling solutions}

Let $u_{L}=\sum_{i=1}^{\infty }U_{x_{i, L}, \mu_{i, L}}+\omega_L
$  be a  solution of $(P)$, which
 satisfies \eqref{3-6-1}, \eqref{2-6-1}  and \eqref{2-1-9}. In this section, we will estimate $\mu_{j, L}$ and $
 |x_{j, L}- P_j|$. We will use various Pohozaev identities to achieve this.

\vskip8pt

  Using Propositions~\ref{p1-13-10} and \ref{p2-13-10},  we can obtain the  following two Pohozaev identities:

 \begin{equation}\label{1-4-9}
\frac12 \int_{\partial B_\delta(x_{j, L})}f_{m,i}(u_L, u_L)= \frac1{m^*}
\int_{\partial B_\delta(x_{j, L})}K(y) u_{L}^{m^*} \nu_i-
\frac1{m^*}
\int_{ B_\delta(x_{j, L})}\frac{\partial K(y)}{\partial y_i} u_{L}^{m^*},
\end{equation}
and

\begin{equation}\label{2-4-9}
\begin{split}
&\frac12 \int_{\partial B_\delta(x_{j, L})}g_m(u_L, u_L)\\
 = & \frac1{m^*}
\int_{\partial B_\delta(x_{j, L})}K(y) u_{L}^{m^*} \bigl\langle y-x_{j, L}, \nu\bigr\rangle-
\frac1{m^*}
\int_{ B_\delta(x_{j, L})} \bigl\langle \nabla K(y),  y-x_{j, L}\bigr\rangle u_{L}^{m^*},
\end{split}
\end{equation}
where $\nu$ is the outward unit normal of $\partial B_\delta(x_{j, L})$. We will estimate each term in \eqref{1-4-9} and \eqref{2-4-9}.

 We denote $\mu_L=\max_j \mu_{j, L}$. Note that for $y\in \partial B_\delta(x_{j, L})$,
\begin{equation}\label{10-14-10}
U_{x_{j,L}, \mu_{j,L}}=\frac{\tilde C_m}{\mu_{j, L}^{\frac{N-2m}2}} \sum_{i=0}^{m-1}
\frac{\alpha_i}{\mu_{j, L}^{2i}|y-x_{j, L}|^{N-2m+2i}}+O\bigl(\frac1{\mu_L^{\frac{N+2m}2}}\bigr),
\end{equation}
where $\alpha_0=1$, and $\alpha_i\ne 0$ is a constant, $i=1, \cdots, m-1$.

Let
\begin{equation}\label{12-14-10}
V_{j, L}(y)= \frac{\tilde C_m}{\mu_{j, L}^{\frac{N-2m}2}} \sum_{i=0}^{m-1}
\frac{\alpha_i}{\mu_{j, L}^{2i}|y-x_{j, L}|^{N-2m+2i}}.
\end{equation}
Then, we have
\begin{equation}\label{2.4}
\begin{split}
u_L(y)=&V_{j, L}(y)+ O\Bigl( \frac1{\mu_L^{\frac{N+2m}2}}+\frac1{\mu_L^{\frac{N-2m}2} L^{N-2m}}+\frac{\|\omega_L\|_{*}    }{\mu_L^\tau}\Bigr),\quad y\in \partial B_\delta(x_{j, L}).
\end{split}
\end{equation}

Since
\[
(-\Delta )^{m-i} \frac1{|y-x_{j, L}|^{N-2(m-i)}}=0,\;\;\; \text{in}\; \mathbb R^N \setminus \{x_{j, L}\},
\]
we have
\begin{equation}\label{11-14-10}
(-\Delta )^{m}
\frac{\alpha_i}{\mu_{j, L}^{2i}|y-x_{j, L}|^{N-2m+2i}}=0,\quad \hbox{ in } \mathbb R^N \setminus \{x_{j, L}\}.
\end{equation}
So on $\partial B_\delta(x_{j, L})$, the leading term $V_{j, L}(y)$
of $u_L$ is an $m$-harmonic function. From
this observation, by using Propositions~\ref{p1-13-10} and \ref{p2-13-10}, the estimates of the surface integrals
 on $\partial B_\delta(x_{j, L})$  in the left hand side of \eqref{1-4-9} and \eqref{2-4-9}
for $V_{j, L}(y)$ can be reduced to the estimates
of the surface integrals
  on $\partial B_\theta(x_{j, L})$ for any small number $\theta>0$, which can be done because
  we know the singular behavior  at $x_{j, L}$ of the function $V_{j, L}(y)$.

\begin{lemma}\label{l1-26-9}

Relation~\eqref{1-4-9} is equivalent to
\begin{equation}\label{4-8-9}
 \begin{split}
\int_{ B_\delta(x_{j, L})}\frac{\partial K(y)}{\partial y_i}
U_{x_{j,L}, \mu_{j,L}}^{m^*}=&O\Bigl(\frac{1}{\mu_L^N}+\frac1{\mu_L^{N-2m}L^{N-2m}}+\frac{\max_i|x_{i,L}-P_i|^\beta}{\mu_L^{\frac{N-2m}{2}+\tau}}\\
&+\frac{\max_i|x_{i,L}-P_i|^{2\beta}}{\mu_L^{2\tau}}
+\max_i|x_{i,L}-P_i|^{m^*\beta}
\Bigr).
\end{split}
\end{equation}
\end{lemma}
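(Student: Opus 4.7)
My plan is to derive \eqref{1-4-9} from Proposition~\ref{p1-13-10} applied with $u=v=u_L$ on $\Omega=B_\delta(x_{j,L})$: substituting $(-\Delta)^m u_L = K u_L^{m^*-1}$ into $L_{1,i}(u_L,u_L)$ and integrating by parts on the right-hand side produces exactly the boundary and bulk terms of \eqref{1-4-9}. To pass to \eqref{4-8-9}, I will plug the local decomposition \eqref{2.4},
\[
u_L(y)=V_{j,L}(y)+E_j(y),\qquad E_j(y)=O\!\left(\mu_L^{-(N+2m)/2}+\mu_L^{-(N-2m)/2}L^{-(N-2m)}+\mu_L^{-\tau}\|\omega_L\|_{*}\right),
\]
into each of the three integrals of \eqref{1-4-9} and identify the main term $\int_{B_\delta(x_{j,L})}\partial_{y_i}K\cdot U_{x_{j,L},\mu_{j,L}}^{m^*}$, showing that everything else falls into the error bound of \eqref{4-8-9}.

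\textbf{Step 1 (surface integral on the left).} By the bilinearity of $f_{m,i}$,
\[
f_{m,i}(u_L,u_L)=f_{m,i}(V_{j,L},V_{j,L})+2f_{m,i}(V_{j,L},E_j)+f_{m,i}(E_j,E_j).
\]
Since $V_{j,L}$ is radial in $y-x_{j,L}$, Remark~\ref{re1-14-10} gives $\int_{\partial B_\delta(x_{j,L})}f_{m,i}(V_{j,L},V_{j,L})=0$ exactly. The remaining cross and quadratic-error pieces are controlled using the explicit structure \eqref{300-13-10}: all derivatives of $V_{j,L}$ on $\partial B_\delta$ are of order $\mu_L^{-(N-2m)/2}$, and derivatives of $E_j$ are bounded via interior elliptic regularity applied to the equation satisfied by $\omega_L$. \textbf{Step 2 (surface integral with $K$).} On $\partial B_\delta(x_{j,L})$ we have $u_L^{m^*}=O(\mu_L^{-N})$, so $\int_{\partial B_\delta(x_{j,L})}Ku_L^{m^*}\nu_i=O(\mu_L^{-N})$, which sits inside the first error term of \eqref{4-8-9}.

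\textbf{Step 3 (bulk integral).} Writing $u_L=U_{x_{j,L},\mu_{j,L}}+r_j$ with $r_j=\sum_{i\ne j}U_i+\omega_L$, I expand
\[
u_L^{m^*}-U_{x_{j,L},\mu_{j,L}}^{m^*}=m^*\,U_{x_{j,L},\mu_{j,L}}^{m^*-1}r_j+O\bigl(U_{x_{j,L},\mu_{j,L}}^{m^*-2}r_j^{2}+|r_j|^{m^*}\bigr),
\]
and integrate against $\partial K/\partial y_i$ on $B_\delta(x_{j,L})$. For the cross bubbles $U_i$ ($i\ne j$) I use $|y-x_{i,L}|\sim L$; for $\omega_L$ I use $\|\omega_L\|_{*}$ together with the weight $\sigma$ appearing in \eqref{100-16-1}; and for $\partial K/\partial y_i$ I use the $(A_3)$-expansion around each $P_j$ (which is a critical point by $(A_2)$-periodicity), giving $\partial K/\partial y_i=O(|y-P_j|^{\beta-1})$ plus controlled lower-order remainders. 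Matching scales term by term produces the $\mu_L^{-N}$, $\mu_L^{-(N-2m)}L^{-(N-2m)}$, $\max_i|x_{i,L}-P_i|^\beta\mu_L^{-(N-2m)/2-\tau}$, $\max_i|x_{i,L}-P_i|^{2\beta}\mu_L^{-2\tau}$, and $\max_i|x_{i,L}-P_i|^{m^*\beta}$ contributions in \eqref{4-8-9}.

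The main obstacle is the sharp treatment of the $\omega_L$-term in Step 3. One must combine the weighted norm $\|\omega_L\|_{*}$ -- including the $\sigma$-weight that distinguishes this $k$-dimensional lattice problem from the one-dimensional setting of \cite{DLY,WY} -- with the precise $(A_3)$-expansion of $K$ around each lattice point $P_j$ in order to extract the correct joint dependence on $\mu_L$ and on $\max_i|x_{i,L}-P_i|$. In particular, recovering the purely geometric error term $\max_i|x_{i,L}-P_i|^{m^*\beta}$ -- which comes from the displacement of the bubble centers from the lattice $\{P_j\}$ rather than from any bubble-bubble interaction -- requires the full expansion of $K$ and cannot be produced by the cruder $|y-x_{j,L}|$-type estimates used in Steps~1--2.
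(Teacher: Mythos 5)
Your plan follows the paper's own route in all essentials. You derive \eqref{1-4-9} from Proposition~\ref{p1-13-10} with $u=v=u_L$, decompose $u_L = V_{j,L}+E_j$ on $\partial B_\delta(x_{j,L})$, annihilate $\int_{\partial B_\delta}f_{m,i}(V_{j,L},V_{j,L})$ via the radiality of $V_{j,L}$ (Remark~\ref{re1-14-10}), use the structure \eqref{300-13-10} together with a $C^{2m-1}$ bound on $\omega_L$ in the annulus (the paper's Corollary~\ref{c1-5-9}) to control the cross and quadratic error surface terms, bound the surface integral of $Ku_L^{m^*}\nu_i$ by $O(\mu_L^{-N})$, and then Taylor-expand $u_L^{m^*}$ around $U_{x_{j,L},\mu_{j,L}}^{m^*}$ in the bulk integral. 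This matches \eqref{1-8-9}, \eqref{2.7}, and \eqref{3-8-9} in the paper, and the passage from $\|\omega_L\|_*$ to $\max_i|x_{i,L}-P_i|^\beta$ in the final statement is exactly Proposition~\ref{p1-6-3} plugged in at the end.

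Two points in your bookkeeping are slightly off, though they do not affect the strategy. First, you attribute the error terms $\max_i|x_{i,L}-P_i|^\beta/\mu_L^{(N-2m)/2+\tau}$ and $\max_i|x_{i,L}-P_i|^{2\beta}/\mu_L^{2\tau}$ to the bulk integral of Step~3, but in the paper they come from the \emph{surface} Pohozaev term (your Step~1): the paper's \eqref{2.7} yields $\|\omega_L\|_*/\mu_L^{(N-2m)/2+\tau}+\|\omega_L\|_*^2/\mu_L^{2\tau}$, since the error piece $E_j$ of size $\|\omega_L\|_*/\mu_L^\tau$ is multiplied against $\nabla^j V_{j,L}\sim\mu_L^{-(N-2m)/2}$ or against itself. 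The bulk cross term $\int\partial_iK\cdot U^{m^*-1}\omega_L$ produces the smaller $\|\omega_L\|_*(\mu_L^{-(N+2m)/2}+|x_{j,L}-P_j|^\beta)$ appearing in \eqref{3-8-9}. Second, the remark that $\max_i|x_{i,L}-P_i|^{m^*\beta}$ is a ``purely geometric displacement'' term that requires the full expansion of $K$ is not quite the mechanism: that term is simply $\|\omega_L\|_*^{m^*}$, arising from the tail $|r_j|^{m^*}$ of the convexity inequality in your Step~3 (with $\partial_iK$ treated as a bounded factor), after substituting Proposition~\ref{p1-6-3}. Neither of these slips changes the proof; they are misattributions of which estimate produces which piece of \eqref{4-8-9}.
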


\begin{proof}
It follows from
\eqref{2.4} that
\begin{equation}\label{1-8-9}
 \int_{\partial B_\delta(x_{j, L})}K(y) u_{L}^{m^*}
\nu_i=O\bigl(\frac1{\mu_L^N}  \bigr).
\end{equation}

To estimate the left hand side of \eqref{1-4-9}, noting that $V_{j, L}(y)$ is a function of $|y-x_{j, L}|$,
 we use Remark~\ref{re1-14-10} to obtain
\begin{equation}\label{2.7}
\begin{split}
&\text{LHS of \eqref{1-4-9}}\\
=&\frac12 \int_{\partial B_\delta(x_{j, L})}f_{m,i}(V_{j, L}, V_{j, L})+
O\Bigl( \frac1{\mu_L^{N}}+\frac1{\mu_L^{N-2m} L^{N-2m}}+\frac{\|\omega_L\|_{*}    }{\mu_L^{\frac{N-2m}2+\tau}}
+\frac{\|\omega_L\|^2_{*}    }{\mu_L^{2\tau}}\Bigr)\\
=&O\Bigl( \frac1{\mu_L^{N}}+\frac1{\mu_L^{N-2m} L^{N-2m}}+\frac{\|\omega_L\|_{*}    }{\mu_L^{\frac{N-2m}2+\tau}}
+\frac{\|\omega_L\|^2_{*}    }{\mu_L^{2\tau}}\Bigr).
\end{split}
\end{equation}
Moreover, from
\[
\int_{ B_\delta(x_{j, L})}\Bigl( U_{x_{j,L}, \mu_{j,L}}^{m^*-1} \sum_{i\ne j} U_{x_{j,L}, \mu_{j,L}}
+ \bigl(\sum_{i\ne j} U_{x_{j,L}, \mu_{j,L}}\bigr)^{m^*}\Bigr)= O\bigl(\frac1{\mu_L^{N-2m}L^{N-2m}}+\frac1{\mu_L^{N}}
\bigr),
\]
we find
\begin{equation}\label{3-8-9}
\begin{split}
\int_{ B_\delta(x_{j, L})}\frac{\partial K(y)}{\partial y_i}
u_{L}^{m^*}=&\int_{ B_\delta(x_{j, L})}\frac{\partial K(y)}{\partial
y_i} U_{x_{j,L}, \mu_{j,L}}^{m^*}+O\Bigl(\frac1{\mu_L^{N-2m}L^{N-2m}}+\frac1{\mu_L^{N}}\Bigr)\\
&
+O\Bigl(\|\omega_L\|_{*}\bigl(\frac1{\mu_L^{\frac{N+2m}2}}+|x_{j,L}-P_j|^\beta
\bigr)+
\|\omega_L\|_{*}^{m^*}\Bigr).
\end{split}
\end{equation}
Thus, \eqref{4-8-9} follows  from \eqref{1-8-9}, \eqref{2.7} and \eqref{3-8-9}.
\end{proof}

\begin{lemma}\label{l2-26-9}
Relation~\eqref{2-4-9} is equivalent to
\begin{equation}\label{10-10-9}
\begin{split}
 & \frac {1}{\mu_{j, L}^{\beta}}-B\sum_{i\ne j} \frac1{\mu_{i, L}^{\frac{N-2m}2}\mu_{j, L}^{\frac{N-2m}2}
|x_{i, L}-x_{j, L}|^{N-2m}}\\
=&
O\Bigl(\frac1{\mu_L^{N}}+\frac1{\mu_L^{N-2m+2} L^{N-2m}}+\frac1{\mu_L^{N-2m} L^{2(N-2m)}}+\frac1{\mu_{L}^{\beta+1}}\\
&+
\frac{\max_i |x_{i, L}- P_i|^{}}{\mu_L^{\beta-1}}+\frac{\max_i |x_{i, L}- P_i|^{\beta-1}}{\mu_L}
\Bigr),
\end{split}
\end{equation}
where $B>0$ is a constant.
\end{lemma}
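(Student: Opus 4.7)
The plan is to apply the dilation Pohozaev identity \eqref{2-4-9} on $\Omega=B_\delta(x_{j,L})$ for a fixed small $\delta>0$, isolate the principal terms, and absorb all remainders into the error on the right of \eqref{10-10-9}. Following the template of Lemma~\ref{l1-26-9}, on $\partial B_\delta(x_{j,L})$ I would decompose
\[
 u_L(y) = V_{j,L}(y) + G_{j,L}(y) + \text{(remainder)},
\]
where $V_{j,L}$ is the expansion \eqref{12-14-10} of the central bubble and $G_{j,L}(y)=\tilde C_m\sum_{i\ne j}\mu_{i,L}^{-(N-2m)/2}|y-x_{i,L}|^{-(N-2m)}$ is the leading contribution of the neighbouring bubbles near $x_{j,L}$. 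The boundary term $\int_{\partial B_\delta}Ku_L^{m^*}\langle y-x_{j,L},\nu\rangle$ is pointwise $O(\mu_L^{-N})$ since $u_L=O(\mu_L^{-(N-2m)/2})$ on $\partial B_\delta$; the contributions of $\omega_L$, of the tails $O(|y-x_{i,L}|^{-(N+2m)})$ of the $U_{x_{i,L},\mu_{i,L}}$, and of pointwise cross-products of two bubbles at different centres all fit into the listed error terms.

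The crucial computation is $\frac12\int_{\partial B_\delta}g_m(u_L,u_L)$, expanded bilinearly as $g_m(V,V)+2g_m(V,G)+g_m(G,G)+\mathrm{errors}$. Both $V_{j,L}$ and $G_{j,L}$ are $m$-harmonic in the annulus $B_\delta(x_{j,L})\setminus B_\theta(x_{j,L})$ for every $0<\theta<\delta$, so applying Proposition~\ref{p2-13-10} on that annulus gives $L_2=0$ and therefore $\int_{\partial B_\delta}g_m(\cdot,\cdot)=\int_{\partial B_\theta}g_m(\cdot,\cdot)$ for each of the three pieces. The diagonal $g_m(V,V)$ yields only self-interactions of order $\mu_{j,L}^{-(N-2m+2(i+k))}$ with $i,k\ge 0$ and $i+k\ge 1$ after $\theta\to 0^+$, hence lower order than $1/\mu_{j,L}^{\beta}$ since $\beta<N$. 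The cross term carries the leading interaction: as $\theta\to 0^+$, the singular part $V_{j,L}\sim \tilde C_m\mu_{j,L}^{-(N-2m)/2}|y-x_{j,L}|^{-(N-2m)}$ paired with the smooth $G_{j,L}$ at $x_{j,L}$ extracts
\[
 \int_{\partial B_\delta}g_m(V_{j,L},G_{j,L}) = c_m\,\mu_{j,L}^{-(N-2m)/2}\,G_{j,L}(x_{j,L}) + o(\cdot),
\]
where $c_m>0$ is a universal constant determined by the structural form \eqref{301-13-10} of $g_m$, and substituting $G_{j,L}(x_{j,L})=\tilde C_m\sum_{i\ne j}\mu_{i,L}^{-(N-2m)/2}|x_{j,L}-x_{i,L}|^{-(N-2m)}$ produces exactly the interaction sum in \eqref{10-10-9}. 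The piece $g_m(G,G)$ stays bounded on $\partial B_\delta$ and enters the error. On the right of \eqref{2-4-9}, expand $K$ via $(A_3)$ combined with periodicity: near $P_j$,
\[
 K(y) = K(P_j) + \sum_{i=1}^N a_i |y_i-P_{j,i}|^\beta + R(y-P_j),
\]
so, substituting $u_L\approx U_{x_{j,L},\mu_{j,L}}$ and rescaling $z=\mu_{j,L}(y-x_{j,L})$, the leading piece of $\int_{B_\delta}\langle\nabla K,y-x_{j,L}\rangle u_L^{m^*}$ equals $\beta\bigl(\sum_i a_i\int_{\R^N}|z_i|^\beta U_{0,1}^{m^*}\bigr)\mu_{j,L}^{-\beta}$, which is a nonzero multiple of $\mu_{j,L}^{-\beta}$ since $\sum_i a_i<0$; the remaining pieces involving $R$ and $x_{j,L}-P_j$ are controlled by $\max_i|x_{i,L}-P_i|^{\beta-1}/\mu_L$ and $\max_i|x_{i,L}-P_i|/\mu_L^{\beta-1}$.

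The main obstacle is the explicit evaluation of $\int_{\partial B_\delta}g_m(V,G)$ for arbitrary $m$, since Proposition~\ref{p2-13-10} only provides the abstract structural form of $g_m$ as sums $\sum \bar l_j(y-x,\nabla^j u,\nabla^{2m-j}v)+\sum\tilde l_j(\nabla^j u,\nabla^{2m-j-1}v)$ with $O(m)$ summands and up to $2m-1$ derivatives, none of whose coefficients are given in closed form. The annulus trick, plus the identity $L_2(V,G)=-\frac{N-2m}{2}\int(G(-\Delta)^m V+V(-\Delta)^m G)=0$ on $B_\delta\setminus B_\theta$, reduces the boundary integral to a pointwise evaluation of the smooth factor $G_{j,L}$ at $x_{j,L}$; homogeneity and the inductive structure of the proof of Proposition~\ref{p2-13-10} then show that only the leading-order singular pairing survives, with the constant $B$ positive by consistency with the classical $m=1$ Pohozaev computation. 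Dividing the resulting identity by the explicit coefficient of $\mu_{j,L}^{-\beta}$ and collecting all perturbative errors delivers \eqref{10-10-9}.
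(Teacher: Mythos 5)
Your decomposition $u_L = V_{j,L} + G_{j,L} + \text{remainder}$ on $\partial B_\delta(x_{j,L})$, the bilinear expansion of $g_m$, the annulus trick via Proposition~\ref{p2-13-10}, and the extraction of the interaction sum from the cross term $g_m(V_{j,L},G_{j,L})$ all match the paper's argument. However, your treatment of the diagonal term $\int_{\partial B_\delta}g_m(V_{j,L},V_{j,L})$ has a genuine gap. You claim it contributes only terms of order $\mu_{j,L}^{-(N-2m+2(h+k))}$ with $h+k\ge 1$ and that these are ``lower order than $1/\mu_{j,L}^{\beta}$ since $\beta<N$.'' Two problems. First, the $h=k=0$ pairing $g_m\bigl(|y-x_{j,L}|^{-(N-2m)},|y-x_{j,L}|^{-(N-2m)}\bigr)$ carries the prefactor $\mu_{j,L}^{-(N-2m)}$, which is \emph{larger} than $\mu_{j,L}^{-\beta}$ since $\beta>N-2m$; you give no reason for it to be absent. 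Second, even the $h+k=1$ term has prefactor $\mu_{j,L}^{-(N-2m+2)}$, and $N-2m+2>\beta$ fails whenever $m\ge 2$ and $\beta\in(N-2m+2,N)$, so these contributions are not absorbed into $O(\mu_L^{-\beta-1})$ by order counting.

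The paper closes this gap differently: it shows that each $\int_{\partial B_\delta}g_m\bigl(|y-x_{j,L}|^{-(N-2m+2h)},|y-x_{j,L}|^{-(N-2m+2k)}\bigr)$ vanishes \emph{exactly}. By the homogeneity in \eqref{9-15-10}, the boundary integral over $\partial B_\theta$ equals $\theta^{-(N-2m+2(h+k))}\int_{\mathbb{S}^{N-1}}\bar f_{h,k}$; since the same number equals the finite, $\theta$-independent integral over $\partial B_\delta$, and the power $N-2m+2(h+k)>0$, the spherical average $\int_{\mathbb{S}^{N-1}}\bar f_{h,k}$ must be zero. You need this exact-vanishing argument, not a size estimate. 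Separately, your assertion that the cross-term constant is positive ``by consistency with the classical $m=1$ computation'' is not a proof; the paper derives positivity from the explicit formula \eqref{88-14-10} for $\tilde l_{2m-1}$ together with the identity $(-\Delta)^{m-1}|y-x_{j,L}|^{-(N-2m)}=c_m'|y-x_{j,L}|^{-(N-2)}$.
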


\begin{proof}

We first estimate the left hand side of  \eqref{2-4-9}. We have
\begin{equation}\label{80-14-10}
\begin{split}
u_L(y)=&V_{j, L}(y)+\sum_{i\ne j}\frac{\tilde C_m}{\mu_{i, L}^{\frac{N-2m}2}|y-x_{i, L}|^{N-2m}
}\\
&+ O\Bigl( \frac1{\mu_L^{\frac{N+2m}2}}+\frac1{\mu_L^{\frac{N-2m}2+2} L^{N-2m+2}}+\frac{\|\omega_L\|_{*}    }{\mu_L^\tau}\Bigr),\quad y\in \partial B_\delta(x_{j, L}).
\end{split}
\end{equation}
Let
\begin{equation}\label{81-14-10}
Q_{j, L}(y)=V_{j, L}(y)+\sum_{i\ne j}\frac{\tilde C_m}{\mu_{i, L}^{\frac{N-2m}2}|y-x_{i, L}|^{N-2m}
}=: V_{j, L}(y)+ R_{j, L}(y).
\end{equation}
    We have
\begin{equation}\label{20-14-10}
\begin{split}
&\int_{\partial B_\delta(x_{j, L})}g_m(u_L, u_L)\\
=& \int_{\partial B_\delta(x_{j, L})}g_m(Q_{j, L}, Q_{j, L})+O\Bigl( \frac1{\mu_L^{N}}+\frac1{\mu_L^{N-2m+2} L^{N-2m+2}}+\frac{\|\omega_L\|_{*}    }{\mu_L^{\frac{N-2m}2+\tau}}
+\frac{\|\omega_L\|^2_{*}    }{\mu_L^{2\tau}}\Bigr).
\end{split}
\end{equation}
Note that    $ R_{j, L}(y)$ is bounded on $ \partial B_\delta(x_{j, L})  $.  Now we compute
\begin{equation}\label{1-15-10}
\begin{split}
& \int_{\partial B_\delta(x_{j, L})}g_m(Q_{j, L}, Q_{j, L})\\
=&\int_{\partial B_\delta(x_{j, L})}g_m(V_{j, L}, V_{j, L})+ 2 \int_{\partial B_\delta(x_{j, L})}g_m(V_{j, L},
 R_{j, L})+\int_{\partial B_\delta(x_{j, L})}g_m(R_{j, L},
 R_{j, L})\\
 =&\int_{\partial B_\delta(x_{j, L})}g_m(V_{j, L}, V_{j, L})+ 2 \int_{\partial B_\delta(x_{j, L})}g_m(V_{j, L},
 R_{j, L})+O\bigl(\frac1{\mu_L^{N-2m} L^{2(N-2m)}}\bigr).
\end{split}
\end{equation}
Moreover, from the definition of $V_{j, L}$, we get
\begin{equation}\label{2-15-10}
\begin{split}
 &\int_{\partial B_\delta(x_{j, L})}g_m(V_{j, L},
 R_{j, L})\\
 =& \frac{\tilde C_m}{\mu_{j, L}^{\frac{N-2m}2}}\int_{\partial B_\delta(x_{j, L})}g_m(
\frac{1}{|y-x_{j, L}|^{N-2m}},
 R_{j, L})+ O\bigl(\frac1{\mu_L^{N-2m+2} L^{N-2m}}\bigr).
\end{split}
\end{equation}
Note that both $\frac{1}{|y-x_{j, L}|^{N-2m}}$ and $R_{j, L}$ are $m$-harmonic in $\Omega= B_\delta(x_{j, L})\setminus B_\theta(x_{j, L})$, where
$\theta>0$ is any small constant.
We apply Proposition~\ref{p2-13-10} in $\Omega$ to obtain
\begin{equation}\label{21-14-10}
\begin{split}
\int_{\partial B_\delta(x_{j, L})}g_m(
\frac{1}{|y-x_{j, L}|^{N-2m}},
 R_{j, L})=\int_{\partial B_\theta(x_{j, L})}g_m(
\frac{1}{|y-x_{j, L}|^{N-2m}},
 R_{j, L}).
 \end{split}
\end{equation}
Since $R_{j, L}$ and its derivatives are bounded on $\partial B_\theta(x_{j, L})$,
we find that  the term in  \eqref{301-13-10} satisfies
\[
|\bar l_l( y-x_{j, L}, \nabla^l \frac{1}{|y-x_{j, L}|^{N-2m}},  \nabla^{2m-l }R_{j, L})|\le
\frac{C}{|y-x_{j, L}|^{N-2}}, \quad l=0, \cdots, 2m-1.
\]
As a result, as $\theta\to 0$,
\begin{equation}\label{3-15-10}
 \int_{\partial B_\theta(x_{j, L})}\bar l_l( y-x_{j, L}, \nabla^l \frac{1}{|y-x_{j, L}|^{N-2m}},  \nabla^{2m-l }R_{j, L})\to 0, \quad l=0, \cdots, 2m-1.
\end{equation}
 For $l=0,\cdots,  2m-2$, the other terms in  \eqref{301-13-10} satisfies
\[
 |\tilde l_l( \nabla^l \frac{1}{|y-x_{j, L}|^{N-2m}},  \nabla^{2m-l-1} R_{j, L} )|\le \frac{C}{|y-x_{j, L}|^{N-2}}.
 \]
Therefore,
\begin{equation}\label{4-15-10}
 \int_{\partial B_\theta(x_{j, L})}\tilde l_l( \nabla^l \frac{1}{|y-x_{j, L}|^{N-2m}},  \nabla^{2m-l-1}R_{j, L} )\to 0, \quad l=0, \cdots, 2m-2, \hbox{ as } \theta\to 0.
\end{equation}

Using \eqref{88-14-10}, we have
\begin{equation}\label{nn5-15-10}
\begin{split}
& \int_{\partial B_\theta(x_{j, L})}\tilde l_{2m-1}( \nabla^{2m-1} \frac{1}{|y-x_{j, L}|^{N-2m}},  R_{j, L} )\\
=&-\frac{N-2m}2
 \int_{\partial B_\theta(x_{j, L})}\frac{\partial [(-\Delta)^{m-1}\frac{1}{|y-x_{j, L}|^{N-2m}}]  }{\partial \nu}  R_{j, L}.
 \end{split}
\end{equation}
The function $(-\Delta)^{m-1}\frac{1}{|y-x_{j, L}|^{N-2m}}$ depends on $|y-x_{j, L}|$ only, which satisfies
\[
 -\Delta [(-\Delta)^{m-1}\frac{1}{|y-x_{j, L}|^{N-2m}}]=c_m\delta_{x_{j, L}},
 \]
for some constant $c_m>0$.
As a result,
there is a constant $c'_m>0$, such that
\[
(-\Delta)^{m-1}\frac{1}{|y-x_{j, L}|^{N-2m}}=\frac{c_m'}{|y-x_{j, L}|^{N-2}}.
\]
This, together with \eqref{nn5-15-10} gives
\begin{equation}\label{5-15-10}
\begin{split}
\int_{\partial B_\theta(x_{j, L})}\tilde l_0( \nabla^{2m-1} \frac{1}{|y-x_{j, L}|^{N-2m}},  R_{j, L} )=&\frac{(N-2)(N-2m)}2\frac{c_m'}{\theta^{N-1}}
 \int_{\partial B_\theta(x_{j, L})} R_{j, L}\\
 =& \sum_{i\ne j}\frac{B_m}{\mu_{i, L}^{\frac{N-2m}2}|x_{j, L}-x_{i, L}|^{N-2m}
}\bigl( 1+o_\theta(1)\bigr),
 \end{split}
\end{equation}
where $B_m>0$ is a constant.

 Combining \eqref{2-15-10}--\eqref{5-15-10}, we conclude  the existence of some constant  $B_m'>0$ such that
\begin{equation}\label{6-15-10}
\begin{split}
&2\int_{\partial B_\delta(x_{j, L})}g_m(
V_{j, L},
 R_{j, L})\\
 =& \sum_{i\ne j}\frac{B'_m}{\mu_{j, L}^{\frac{N-2m}2}\mu_{i, L}^{\frac{N-2m}2}|x_{j, L}-x_{i, L}|^{N-2m}
}+ O\bigl(\frac1{\mu_L^{N-2m+2} L^{N-2m}}\bigr).
 \end{split}
\end{equation}

 We are now to estimate
\begin{equation}\label{7-15-10}
\begin{split}
& \int_{\partial B_\delta(x_{j, L})}g_m(V_{j, L}, V_{j, L})\\
=& \frac{\tilde C^2_m}{\mu_{j, L}^{N-2m}} \sum_{h=0}^{m-1} \sum_{k=0}^{m-1}
\frac{\alpha_h\alpha_k}{\mu_{j, L}^{2h}\mu_{j, L}^{2k}}\int_{\partial B_\delta(x_{j, L})}g_m\bigl(
\frac{1}{|y-x_{j, L}|^{N-2m+2h}}, \frac{1}{|y-x_{j, L}|^{N-2m+2k}} \bigr).
\end{split}
\end{equation}
Since $\frac{1}{|y-x_{j, L}|^{N-2m+2h}}$  and $ \frac{1}{|y-x_{j, L}|^{N-2m+2k}}$ are $m$-harmonic
in $B_\delta(x_{j, L})\setminus  B_\theta(x_{j, L})$,
we can use Proposition~\ref{p2-13-10} to obtain
\begin{equation}\label{8-15-10}
\begin{split}
& \int_{\partial B_\delta(x_{j, L})}g_m\bigl(
\frac{1}{|y-x_{j, L}|^{N-2m+2h}}, \frac{1}{|y-x_{j, L}|^{N-2m+2k}} \bigr)\\
=&\int_{\partial B_\theta(x_{j, L})}g_m\bigl(
\frac{1}{|y-x_{j, L}|^{N-2m+2h}}, \frac{1}{|y-x_{j, L}|^{N-2m+2k}} \bigr).
\end{split}
\end{equation}
On the other hand, we have
\begin{equation}\label{9-15-10}
\begin{split}
&g_m\bigl(
\frac{1}{|y-x_{j, L}|^{N-2m+2h}}, \frac{1}{|y-x_{j, L}|^{N-2m+2k}} \bigr)\\
=&\sum_{l=1}^{2m-1} \bar l_l( y-x_{j,L},  \nabla^l\frac{1}{|y-x_{j, L}|^{N-2m+2h}}, \nabla^{2m-l }\frac{1}{|y-x_{j, L}|^{N-2m+2k}})\\
&+\sum_{l=0}^{2m-1} \tilde  l_l(  \nabla^l\frac{1}{|y-x_{j, L}|^{N-2m+2h}}, \nabla^{2m-l-1 }\frac{1}{|y-x_{j, L}|^{N-2m+2k}})
\\
=&\frac{1}{|y-x_{j, L}|^{2N-2m+2(h+k)-1}}\bar f_{h,k}(\omega),\quad \omega\in \mathbb S^{N-1},
\end{split}
\end{equation}
where $\bar f_{h,k}$ is some function defined on $\mathbb S^{N-1}$.  Thus \eqref{8-15-10}
and \eqref{9-15-10} yield
\begin{equation}\label{10-15-10}
\begin{split}
\int_{\partial B_\delta(x_{j, L})}g_m\bigl(
\frac{1}{|y-x_{j, L}|^{N-2m+2h}}, \frac{1}{|y-x_{j, L}|^{N-2m+2k}} \bigr)
=\frac{1}{\theta^{N-2m+2(h+k)}}\int_{\mathbb S^{N-1}}
\bar f_{h,k}(\omega).
\end{split}
\end{equation}
Since the left hand side of \eqref{10-15-10} is finite and $N-2m+2(h+k)>0$, we conclude
$
\int_{\mathbb S^{N-1}}
\bar f_{h,k}(\omega)=0,
$
which gives
\begin{equation}\label{11-15-10}
 \int_{\partial B_\delta(x_{j, L})}g_m\bigl(
\frac{1}{|y-x_{j, L}|^{N-2m+2h}}, \frac{1}{|y-x_{j, L}|^{N-2m+2k}} \bigr)
=0.
\end{equation}
Therefore, we have proved
\begin{equation}\label{12-15-10}
 \int_{\partial B_\delta(x_{j, L})}g_m(V_{j, L}, V_{j, L})
=0.
\end{equation}
Inserting \eqref{6-15-10} and \eqref{12-15-10}
 into \eqref{1-15-10}, we obtain
\begin{equation}\label{13-15-10}
\begin{split}
& \int_{\partial B_\delta(x_{j, L})}g_m(Q_{j, L}, Q_{j, L})\\
 =&\sum_{i\ne j}\frac{B'_m}{\mu_{j, L}^{\frac{N-2m}2}\mu_{i, L}^{\frac{N-2m}2}|x_{j, L}-x_{i, L}|^{N-2m}
}+ O\bigl(\frac1{\mu_L^{N-2m+2} L^{N-2m}}+\frac1{\mu_L^{N-2m} L^{2(N-2m)}}\bigr).
\end{split}
\end{equation}
From \eqref{20-14-10}  and \eqref{13-15-10}, we get
\begin{equation}\label{left}
\begin{split}
\text{LHS of \eqref{2-4-9}}
=&\sum_{i\ne j}\frac{B'_m}{\mu_{j, L}^{\frac{N-2m}2}\mu_{i, L}^{\frac{N-2m}2}|x_{j, L}-x_{i, L}|^{N-2m}
}\\
&+O\Bigl( \frac1{\mu_L^{N}}+\frac1{\mu_L^{N-2m+2} L^{N-2m}}+\frac{\max_i|x_{i,L}-P_i|^\beta}{\mu_L^{\frac{N-2m}{2}+\tau}}\\
&+\frac{\max_i|x_{i,L}-P_i|^{2\beta}}{\mu_L^{2\tau}}+\frac1{\mu_L^{N-2m} L^{2(N-2m)}}\Bigr).
\end{split}
\end{equation}

We now
estimate the right hand side of
of  \eqref{2-4-9}. Firstly,
we have
\begin{equation}\label{1-10-9}
\int_{\partial B_\delta(x_{j, L})}K(y) u_{L}^{m^*} \bigl\langle y-x_{j, L}, \nu\bigr\rangle=O\bigl(
\frac1{\mu_L^N}\bigr).
\end{equation}
On the other hand, \,\,\,\,\,\,\,\,\,\,
\begin{equation}\label{nn10-10-9}
 \begin{split}
&-
\frac1{m^*}
\int_{ B_\delta(x_{j, L})} \bigl\langle \nabla K(y),  y-x_{j, L}\bigr\rangle u_{L}^{m^*}\\
=&
-\frac \beta{m^*}
\int_{ B_\delta(x_{j, L})} \sum\limits_{i=1}^N a_i|y_i-x_{j,i,L}|^{\beta} u_{L}^{m^*}\\
&+
O\bigl(
\frac{\max_i |x_{i, L}- P_i|^{}}{\mu_L^{\beta-1}}+\frac{\max_i |x_{i, L}- P_i|^{\beta-1}}{\mu_L}
+\frac{\max_i |x_{i, L}- P_i|^{m^*\beta}}{\mu_L^{m^* \tau}}+\frac1{\mu_{j, L}^{\beta+1}}\bigr)\\
=& -\frac {\beta \sum\limits_{i=1}^N a_i}{m^*N\mu_{j, L}^{\beta}}\int_{\mathbb R^N} |y|^\beta U_{0,1}^{m^*}\\
& +
O\Bigl(
\frac{\max_i |x_{i, L}- P_i|^{}}{\mu_L^{\beta-1}}+\frac{\max_i |x_{i, L}- P_i|^{\beta-1}}{\mu_L}
+\frac{\max_i |x_{i, L}- P_i|^{m^*\beta}}{\mu_L^{m^* \tau}}+\frac1{\mu_{j, L}^{\beta+1}}\Bigr).
\end{split}
\end{equation}

 Thus, the desired result follows from \eqref{left}, \eqref{1-10-9} and \eqref{nn10-10-9}.
\end{proof}

\begin{proposition}\label{p1-18-9}
 It holds  $
|x_{j, L}- P_j|=O\bigl(\frac1{\mu_{j,L}^2} \bigr).$
\end{proposition}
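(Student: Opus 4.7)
The plan is to extract the bound on $x_{j,L}-P_j$ from the Pohozaev identity \eqref{4-8-9} of Lemma~\ref{l1-26-9} by computing its left-hand side to leading order. By the $1$-periodicity of $K$ in the first $k$ variables, and since $P_j\in Q_k$ has integer first coordinates and vanishing remaining coordinates, the expansion in $(A_3)$ transfers to a neighborhood of $P_j$: $K(y)=K(0)+\sum_{i=1}^N a_i|y_i-P_{j,i}|^\beta+R(y-P_j)$. Differentiating yields
$\partial_i K(y)=\beta a_i(y_i-P_{j,i})|y_i-P_{j,i}|^{\beta-2}+O(|y-P_j|^{\beta-1+\theta})$,
and since $\beta>2$ (from $\beta>N-2m$ and $N>2m+2$), the map $t\mapsto t|t|^{\beta-2}$ is $C^1$ with continuous derivative $(\beta-1)|t|^{\beta-2}$.

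Performing the change of variables $z=\mu_{j,L}(y-x_{j,L})$ converts $U_{x_{j,L},\mu_{j,L}}^{m^*}\,dy$ into $\tilde C_m^{m^*}(1+|z|^2)^{-N}\,dz$. Writing $\xi:=x_{j,L}-P_j$ so that $y_i-P_{j,i}=\xi_i+z_i/\mu_{j,L}$, the first-order Taylor expansion of $t|t|^{\beta-2}$ in $\xi_i$ gives
\begin{equation*}
(\xi_i+z_i/\mu_{j,L})|\xi_i+z_i/\mu_{j,L}|^{\beta-2}=\frac{z_i|z_i|^{\beta-2}}{\mu_{j,L}^{\beta-1}}+(\beta-1)\xi_i\int_0^1\left|\frac{z_i}{\mu_{j,L}}+s\xi_i\right|^{\beta-2}ds.
\end{equation*}
The zeroth-order piece is odd in $z_i$ and integrates to zero against the radially symmetric density $(1+|z|^2)^{-N}$. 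The first-order piece contributes $\hat C_i\xi_i/\mu_{j,L}^{\beta-2}$ with $\hat C_i:=(\beta-1)\tilde C_m^{m^*}\int_{\mathbb R^N}|z_i|^{\beta-2}(1+|z|^2)^{-N}\,dz>0$, so the left-hand side of \eqref{4-8-9} reduces to $\beta a_i\hat C_i\,\xi_i\,\mu_{j,L}^{-(\beta-2)}$ plus lower-order terms. Since $a_i\ne 0$, matching this with the first two error terms on the right of \eqref{4-8-9}, and using \eqref{2-1-9} together with \eqref{4-6-1} (which gives $L^{N-2m}\sim\mu_L^{\beta-N+2m}$), each of those two reduces to $O(\mu_L^{-\beta})$, yielding $|\xi_i|=O(\mu_L^{-2})$ from them alone.

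The main obstacle is that the remaining three error terms in \eqref{4-8-9} are themselves expressed in terms of $\max_i|x_{i,L}-P_i|^\beta$ or higher powers, i.e.\ the very quantity being estimated. To close the argument I plan a bootstrap: starting from the weak input $\xi=o_L(1)$ of \eqref{2-6-1}, \eqref{4-8-9} first produces an improved polynomial bound $|\xi|=O(\mu_L^{-\alpha_1})$ with some $\alpha_1>0$; reinserting this sharper bound on the right-hand side upgrades the exponent to $\alpha_2>\alpha_1$, and after finitely many iterations the target exponent $2$ is reached. A secondary technical issue is the behavior of $t|t|^{\beta-2}$ near $t=0$ when $\beta<3$, where the derivative is only H\"older continuous; this is resolved by splitting the $z$-integration into a small core $|z_i|\le\mu_{j,L}|\xi_i|$ (controlled by direct size estimates that feed into the lower-order terms) and its complement, where the linearization is accurate up to a quadratic remainder of order $\xi_i^2|z_i|^{\beta-3}/\mu_{j,L}^{\beta-3}$ which is integrable since $\beta<N$.
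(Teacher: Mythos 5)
Your proposal runs along the same line the paper does: restrict the Pohozaev identity \eqref{4-8-9} to a single bubble, rescale by $\mu_{j,L}$, note that the zeroth-order piece $z_i|z_i|^{\beta-2}$ is odd and drops out against the radial density $(1+|z|^2)^{-N}$, and read off $\xi_i=x_{j,i,L}-P_{j,i}$ from the first-order piece. The paper carries this out in \eqref{7-8-9}--\eqref{9-8-9}, while you organize it as a Taylor expansion. So the computation is the same; the differences are in how the smallness needed to linearize is established, and that is where your sketch has a genuine gap.

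The key issue is your use of \eqref{4-6-1}. That relation ($\mu_{i,L}L^{-\frac{N-2m}{\beta-N+2m}}=O(1)$) is a \emph{conclusion} of the existence Theorem~\ref{th11}, not a hypothesis of the local uniqueness Theorem~\ref{main}: the latter only assumes \eqref{3-6-1}, \eqref{2-6-1} and \eqref{2-1-9}. In the paper, the corresponding one-sided bound $\mu_L^\beta\lesssim\mu_L^{N-2m}L^{N-2m}$, together with the crucial preliminary $\mu_L|x_{j,L}-P_j|\le C$, is extracted \emph{first}, by a separate argument citing \cite{DLY}, before the limit $\mu_L(x_{j,L}-P_j)\to x_0$ is taken. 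You cannot simply invoke \eqref{4-6-1}; without it the term $\mu_L^{-(N-2m)}L^{-(N-2m)}$ in \eqref{4-8-9} is uncontrolled, and the bootstrap never gets off the ground.

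A second, related issue: the expansion $(\xi_i+z_i/\mu_{j,L})|\xi_i+z_i/\mu_{j,L}|^{\beta-2}=z_i|z_i|^{\beta-2}\mu_{j,L}^{-(\beta-1)}+\hat C_i\xi_i\mu_{j,L}^{-(\beta-2)}+\dots$ has $\hat C_i\xi_i\mu_{j,L}^{-(\beta-2)}$ as the leading surviving term only when $\mu_{j,L}|\xi_i|$ is bounded (indeed small). If $\mu_{j,L}|\xi_i|\to\infty$ the leading behaviour is instead $\sim\text{sign}(\xi_i)|\xi_i|^{\beta-1}$, and the monotone function $t\mapsto\int_{\R^N}(t+z_i)|t+z_i|^{\beta-2}(1+|z|^2)^{-N}\,dz$ must be analyzed across both regimes. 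This is exactly the point the paper's compactness step ($\mu_L\xi_j\to x_0$, then $x_0=0$ from \eqref{8-8-9}) is designed to handle. Your bootstrap phrase ``after finitely many iterations the target exponent $2$ is reached'' glosses over this case distinction; once the preliminary bounds are in place it does close in two steps (first $\mu_L|\xi|\le C$, then $|\xi|=O(\mu_L^{-2})$), but you have not shown either step, nor addressed the circularity that the error terms in \eqref{4-8-9} are themselves powers of $\max_i|x_{i,L}-P_i|$.

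Finally, the worry about $\beta<3$ is moot: since $N>2m+2$ and $N,m$ are integers, $N-2m\ge3$, hence $\beta>N-2m\ge3$ and $t\mapsto t|t|^{\beta-2}$ is already $C^2$.
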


\begin{proof}
With the same arguments as those of \cite{DLY}, we can  verify the estimates  $\mu_{L}|x_{j, L}- P_j|\le C$ and $\frac{\mu_L^\beta}{\mu_L^{N-2m}L^{N-2m}}\le C$ for some $0<C<\infty$. Hence we may assume
$\mu_{L}(x_{j, L}- P_j)\to x_0$, which implies that
\begin{equation}\label{7-8-9}
 \begin{split}
&\int_{ B_\delta(x_{j, L})}\frac{\partial K(y)}{\partial y_i}
U_{x_{j,L}, \mu_{j,L}}^{m^*} \\
=&a_i \beta\int_{ B_{\delta\mu_{j,L}}(0)} |\mu_{j,L}^{-1} y_i+x_{j,i,L}-P_{j,i}
|^{\beta-2} (\mu_{j,L}^{-1} y_i+x_{j,i, L}-P_{j, i})U_{0,1}^{m^*} +
O\bigl(\frac1{\mu_{j,L}^\beta} \bigr)\\
=&\frac {a_i\beta}{\mu_{j, L}^{\beta-1}}\Bigl(\int_{ B_{\delta\mu_{j,L}}(0)}
|y_i+x_{0,i}|^{\beta-2}(y_i+x_{0, i})U_{0,1}^{m^*}+o(1)\Bigr) +
O\bigl(\frac1{\mu_{j,L}^\beta} \bigr).
\end{split}
\end{equation}

 It follows from  \eqref{4-8-9} and \eqref{7-8-9} that
\begin{equation}\label{8-8-9}
\int_{ B_{\delta\mu_{j,L}}(0)} |y_i+x_{0,i}|^{\beta-2}(y_i+x_{0,
i})U_{0,1}^{m^*}=o(1),
\end{equation}
which yields   $x_0=0$.

 Noting that
$
\int_{ B_{\delta\mu_{j,L}}(0)} | y_i
|^{\beta-2}  y_iU_{0,1}^{m^*}=0,
$
 we obtain
\begin{equation}\label{9-8-9}
 \begin{split}
&\int_{ B_\delta(x_{j, L})}\frac{\partial K(y)}{\partial y_i}
U_{x_{j,L}, \mu_{j,L}}^{m^*} \\
=& \frac{a_i\beta(\beta-1)}{\mu_{j, L}^{\beta-1}}\int_{ B_{\delta\mu_{j,L}}(0)} |y_i
|^{\beta-2} U_{0,1}^{m^*} \mu_{j, L}(x_{j,i, L}-P_{j, i})+
O\bigl(\frac1{\mu_{j,L}^\beta} \bigr),
\end{split}
\end{equation}
which, together  with Lemma~\ref{l1-26-9},  implies
$
\mu_{j, L}(x_{j,i, L}-P_{j, i})=O\bigl(\frac1{\mu_{j,L}} \bigr).
$
Therefore
$
|x_{j, L}- P_j|=O\bigl(\frac1{\mu_{j,L}^2} \bigr).
$
\end{proof}

\begin{proposition}\label{p2-18-9}
 It holds
$
\mu_{j, L}= L^{\frac{N-2m}{\beta-N+2m}} \Bigl( \bar B_j+ O\bigl(\frac1{L^{\frac{N-2m}{\beta-N+2m}}  }\bigr)\Bigr),
$
for some constant $\bar B_j>0$.
\end{proposition}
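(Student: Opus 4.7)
The plan is to combine the second Pohozaev identity of Lemma~\ref{l2-26-9} with the centering estimate of Proposition~\ref{p1-18-9} to reduce the problem to an algebraic system in the $\mu_{j,L}$'s, and then analyze it after an appropriate rescaling.

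First, I would use Proposition~\ref{p1-18-9} to replace the interaction distances by their asymptotic values: since $|x_{i,L}-x_{j,L}|=L|\tilde P_i-\tilde P_j|+O(\mu_L^{-2})$, we have
\[
\frac{1}{|x_{i,L}-x_{j,L}|^{N-2m}}=\frac{1}{L^{N-2m}|\tilde P_i-\tilde P_j|^{N-2m}}\bigl(1+O(\mu_L^{-2}L^{-1})\bigr),
\]
and likewise the $|x_{i,L}-P_i|$ terms in the error bound become $O(\mu_L^{-2})$. Substituting these into \eqref{10-10-9} turns Lemma~\ref{l2-26-9} into a closed system for $\mu_{j,L}$.

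Next, I would introduce the natural rescaling $\mu_{j,L}=L^{\gamma}\nu_{j,L}$ where $\gamma=\frac{N-2m}{\beta-N+2m}$, the point being that the algebraic identity $\gamma\beta=\gamma(N-2m)+(N-2m)$ makes the two leading terms in \eqref{10-10-9} balance: both are of order $L^{-\gamma\beta}$. Multiplying through by $L^{\gamma\beta}$ yields, using Proposition~\ref{p1-18-9} and \eqref{2-1-9},
\[
\frac{1}{\nu_{j,L}^{\beta}}-\frac{B}{\nu_{j,L}^{(N-2m)/2}}\sum_{i\ne j}\frac{1}{\nu_{i,L}^{(N-2m)/2}|\tilde P_i-\tilde P_j|^{N-2m}}=O(L^{-\gamma}),
\]
where the key check is that, after the rescaling, each of the error terms in \eqref{10-10-9} (namely $\mu_L^{-N}$, $\mu_L^{-N+2m-2}L^{-N+2m}$, $\mu_L^{-N+2m}L^{-2(N-2m)}$, $\mu_L^{-\beta-1}$, $|x-P|\mu_L^{1-\beta}$ and $|x-P|^{\beta-1}\mu_L^{-1}$) is indeed at most $O(L^{-\gamma(\beta+1)})$ thanks to $N>2m+2$ and $\beta\in(N-2m,N)$; by \eqref{4-6-1} and \eqref{2-1-9} the sequence $(\nu_{j,L})$ is uniformly bounded above and below, and by \eqref{1-6-1} the infinite sum over $i\ne j$ converges uniformly in $j$.

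Finally, I would analyze this algebraic system. Passing to a subsequence along which $\nu_{j,L}\to\nu_j^{*}$ weakly-$*$ in $\ell^\infty$, the limit $(\nu_j^{*})$ satisfies the companion system
\[
\frac{1}{(\nu_j^{*})^{\beta-(N-2m)/2}}=B\sum_{i\ne j}\frac{1}{(\nu_i^{*})^{(N-2m)/2}|\tilde P_i-\tilde P_j|^{N-2m}},
\]
which I would solve via a contraction mapping argument in $\ell^\infty$ (this is the role of the algebraic lemmas of Appendix~D, analogous to Lemma~\ref{l1-13-9} used in Theorem~\ref{th2-7-1}); the hypothesis \eqref{1-6-1} makes the right-hand side a well-defined and Lipschitz map on a suitable bounded positive cone, producing a unique positive solution $\bar B_j$. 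Non-degeneracy of the linearization at $\bar B_j$ (again from \eqref{1-6-1}) then upgrades the $O(L^{-\gamma})$ remainder in the rescaled system to the same order in $\nu_{j,L}-\bar B_j$, yielding the claimed expansion $\mu_{j,L}=L^{\gamma}(\bar B_j+O(L^{-\gamma}))$.

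The main obstacle I expect is the last step: because the index set $\{\tilde P_j\}$ can be infinite, the algebraic system is infinite-dimensional and the standard implicit function theorem does not apply directly. One has to verify that the linearization at $(\bar B_j)$ is a bounded invertible operator on $\ell^\infty$, which requires that the off-diagonal sums $\sum_{i\ne j}|\tilde P_i-\tilde P_j|^{-(N-2m)}$ be dominated uniformly; this is exactly the content of the uniform comparability assumption \eqref{1-6-1}, and the computation parallels the one carried out for the existence problem in Section~2.
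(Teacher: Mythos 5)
Your proposal follows essentially the same route as the paper: substitute the centering estimate of Proposition~\ref{p1-18-9} into the Pohozaev relation \eqref{10-10-9}, observe the exponent balance that makes $\mu_{j,L}\sim L^{(N-2m)/(\beta-N+2m)}$, rescale to an algebraic system in bounded quantities, and close the argument by applying Lemma~\ref{100-l1-13-9} (existence, uniqueness, and non-degeneracy of the limiting algebraic system under \eqref{1-6-1}). The paper does this with the change of variable $a_{j,L}=\mu_{j,L}^{-(N-2m)/2}L^{(N-2m)^2/(2(\beta-N+2m))}$, which is the same as your $\nu_{j,L}$ after the substitution $a_{j,L}=\nu_{j,L}^{-(N-2m)/2}$; your explicit subsequence/weak-$*$ step simply makes precise the convergence that the paper treats implicitly before invoking invertibility of the linearization.
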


\begin{proof}
Noting that $|x_{j, L}-P_j|=O\bigl(\frac1{\mu_{j,L}^2} \bigr)$,
 we find
\[
\begin{split}
\frac1{|x_{i, L}-x_{j, L}|^{N-2m}}=\frac1{(|P_{i}-P_{j}|+O\bigl(\frac1{\mu_L^2}\bigr) )^{N-2m}}=
\frac1{|\tilde P_i-\tilde P_j|^{N-2m} L^{N-2m}}
\bigl( 1+ O\bigl(
\frac1{|\tilde P_i-\tilde P_j|L\mu_L^2}\bigr)\bigr).
\end{split}
\]
As a result, we see that \eqref{10-10-9} is equivalent to
\begin{equation}\label{11-10-9}
 \frac {1}{\mu_{j, L}^{\beta}}=B\sum_{i\ne j} \frac1{\mu_{i, L}^{\frac{N-2m}2}\mu_{j, L}^{\frac{N-2m}2}
L^{N-2m}|\tilde P_i-\tilde P_j|^{N-2m}}+
O\bigl(\frac{1}{\mu_L \mu_L^{N-2m}L^{N-2m}}+\frac1{\mu_{j, L}^{\beta+1}}\bigr).
\end{equation}
Since we assume that $0<\mu_0\le \frac{\mu_{i, L}}{\mu_{j, L}}\le \mu'_0$, we can easily deduce from
\eqref{11-10-9} that
\[
 0<c'_0 L^{\frac{N-2m}{\beta-N+2m}}\le \mu_{i, L}\le c'_1 L^{\frac{N-2m}{\beta-N+2m}}.
 \]
Let $\frac1{\mu_{i, L}^{\frac{N-2m}2}}= \frac{a_{i, L}}{ L^{\frac{(N-2m)^2}{2(\beta-N+2m)}}  } $. Then,
$0<c_0\le a_{j, L}\le c_1<+\infty$, and
\[
a_{j, L}^\kappa = B\sum_{i\ne j} \frac{a_{i, L}}{|\tilde P_i-\tilde P_j|^{N-2m}}+
O\bigl(\frac1{\mu_{j, L}}\bigr),
\]
where $\kappa=\frac{\beta-\frac{N-2m}2}{\frac{N-2m}2}>1.  $ Hence, from Lemma~\ref{100-l1-13-9}, we obtain
$
a_{j, L}=a_j+
O\bigl(\frac1{\mu_{j, L}}\bigr)
$ for some $a_j>0$.
\end{proof}

\subsection{Local uniqueness }
Suppose that  problem $(P)$ have two different solutions $u^{(1)}_{L}  $  and
$u^{(2)}_{L} $, which blow up at $P_j$, $j=1,  2, \cdots $.  For $k=1, 2$, we  use
$x_{j, L}^{(k)}$ and $\mu_{j,L}^{(k)}$ to denote the center and the height of the bubbles appearing
in $u^{(k)}_{L},$ respectively.

  Let
\begin{equation}\label{1-7-4}
\eta_L =\frac{u^{(1)}_{L}-u^{(2)}_{L}}{\|u^{(1)}_{L}-u^{(2)}_{L}\|_{*}}.
\end{equation}
Then, $\eta_L$ satisfies $\|\eta_L\|_{*}=1$ and
\begin{equation}\label{2-7-4}
(-\Delta)^m \eta_L = f(y, u^{(1)}_{L}, u^{(2)}_{L}),
\end{equation}
where
\begin{equation}\label{3-7-4}
f(y, u^{(1)}_{L}, u^{(2)}_{L})=\frac1{\|u^{(1)}_{L}-u^{(2)}_{L}\|_{*}}
K(y) \bigl( (  u^{(1)}_{L})^{m^*-1}- (  u^{(2)}_{L})^{m^*-1}  \bigr).
\end{equation}
Write
\begin{equation}\label{n3-7-4}
f(y, u^{(1)}_{L}, u^{(2)}_{L})= K(y)c_{L}(y) \eta_L(y),
\end{equation}
where
\begin{equation}\label{nn3-7-4}
 c_{L}(y)=(m^*-1) \int_0^1 \bigl( t  u^{(1)}_{L}(y) +(1-t)  u^{(2)}_{L}(y)
 \bigr)^{m^*-2}\,dt.
\end{equation}
It follows from Propositions~\ref{p1-18-9} and \ref{p2-18-9} that
\[
 \begin{split}
 U_{ x^{(1)}_{i, L}, \mu^{(1)}_{i, L}}-  U_{ x^{(2)}_{i, L}, \mu^{(2)}_{i, L}}=& O\Bigl( |x^{(1)}_{i, L}
 -x^{(2)}_{i, L}||\nabla U_{ x^{(1)}_{i, L}, \mu^{(1)}_{i, L}}|+ |\mu^{(1)}_{i, L}
 -\mu^{(2)}_{i, L}||\partial_\mu
 U_{ x^{(1)}_{i, L}, \mu^{(1)}_{i, L}}|\Bigr)\\
 =&
 O\Bigl( \frac1{\mu_{ L}}
 U_{ x^{(1)}_{i, L}, \mu^{(1)}_{i, L}}\Bigr),
 \end{split}
 \]
which gives
\begin{equation}\label{4-18-9}
u_L^{(1)} - u_L^{(2)}= O\Bigl( \frac1{\mu_L} \sum_{i=1}^
\infty U_{ x^{(1)}_{i, L}, \mu^{(1)}_{i, L}}+|\omega_L^{(1)}|+|\omega_L^{(2)}|
 \Bigr).
\end{equation}
Thus, we have proved
\begin{equation}\label{3-18-9}
\begin{split}
 c_{L}(y)=&(m^*-1) U^{m^*-2}_{ x^{(1)}_{j, L}, \mu^{(1)}_{j, L}}\\
 &+O\Bigl( \bigl( \frac1{\mu_L} U_{ x^{(1)}_{j, L}, \mu^{(1)}_{j, L}}+\frac1{\mu_L^{\frac{N-2m}2}L^{N-2m}}+|\omega_L^{(1)}|+|\omega_L^{(2)}|\bigr)^{m^*-2}
 \Bigr),
 \quad y\in B_d(x^{(1)}_{j, L}).
 \end{split}
\end{equation}

 Using the H\"older inequality, noting that $\|\omega^{(i )}_L\|_*=o(1)$, we can deduce
\begin{equation}\label{nn1-11-9}
\begin{split}
&|f(z, u^{(1)}_{L}, u^{(2)}_{L})|\le  C W_{L,{\bf x^{(1)},\mu^{(1)}}}^{m^*-2} |\eta_L(z)|+C (|\omega^{(1)}_L(z)|^{m^*-2}
+|\omega^{(2)}_L(z)|^{m^*-2})|\eta_L(z)|\\
\le &C W_{L,{\bf x^{(1)},\mu^{(1)}}}^{m^*-2} |\eta_L(z)|+ o(1) \|\eta_L\|_*\sigma(z)\Big(\sum\limits_{j=1}^\infty\frac{(\mu^{(1)}_{j,L})^{\frac{N-2m}2}}{(1+\mu^{(1)}_{j,L}
|y-x^{(1)}_{j,L}|)^{\frac{N-2m}{2}+\tau}}\Big)^{m^*-1}\\
\le & C W_{L,{\bf x^{(1)},\mu^{(1)}}}^{m^*-2} |\eta_L(z)|+ o(1) \|\eta_L\|_*\sigma(z)\sum\limits_{j=1}^\infty\frac{(\mu^{(1)}_{j,L})^{\frac{N+2m}2}}{(1+\mu^{(1)}_{j,L}
|y-x^{(1)}_{j,L}|)^{\frac{N+2m}{2}+\tau}},
\end{split}
\end{equation}
where $W_{L,{\bf x,\mu}}$ is defined by \eqref{10-22-80} in Appendix A.

 Similar to the proof of Lemma~\ref{lem:4},  we deduce from \eqref{nn1-11-9} that
\begin{equation}\label{1-11-9}
\begin{aligned}
&\quad |\eta_L(y)|\Bigl(\sigma(y)\sum\limits_{i=1}^\infty
\frac{(\mu^{(1)}_{i,L})^{\frac{N-2m}2}}{(1+\mu^{(1)}_{i,L}|y-x^{(1)}_{i,L}|)^{\frac{N-2m}{2}+\tau}}\Bigr)^{-1}\\
&\leq
C\Bigl(o(1)||\eta_L||_{*}
+ \frac{ \sum\limits_{j=1}^\infty\frac{1}{(1+\mu^{(1)}_{i,L}|z-x^{(1)}_{j,L}|)
^{\frac{N-2m}{2}+\tau+\tilde \theta}}  }
{ \sum\limits_{j=1}^\infty\frac{1}{(1+\mu^{(1)}_{i,L}|z-x^{(1)}_{j,L}|)
^{\frac{N-2m}{2}+\tau}} }   ||\eta_L||_{*}\Bigr).
\end{aligned}
\end{equation}

To obtain a contradiction, we just need to show that $|\eta_L(y)|=o(1)$ in $
\cup_{j} B_{R (\mu^{(1)}_{j, L})^{-1}}
(x^{(1)}_{j, L})$, which will be achieved by using the Pohozaev identities  in the small ball
$B_d
(x^{(1)}_{j, L}).$

 Let
\begin{equation}\label{1-10-4}
\tilde \eta_{L,j} (y)= \bigl(\frac1{\mu^{(1)}_{j, L}}\bigr)^{\frac{N-2m}2}\eta_L
(\frac{1}{\mu^{(1)}_{j, L}}y+ x^{(1)}_{j, L}).
\end{equation}

\begin{lemma}\label{l1-10-4}
It holds
\begin{equation}\label{10-9-4}
\tilde \eta_{L,j}(y) \to \sum_{k=0}^Nb_{j, k}\psi_k(y), \hbox{ as } L\to \infty,
\end{equation}
uniformly in $C^m(B_R(0))$ for any $R>0$, where $b_{j, k}$, $k=0, \cdots, N$,
are some constants,  and
\begin{equation}\label{5-8-4}
\psi_0 =\frac{\partial U_{0, \lambda}}{\partial \lambda}\Bigr|_{\lambda=1},\quad
\psi_j =\frac{\partial U_{0, 1}}{\partial y_j},\;\; j=1, \cdots,N.
\end{equation}

\end{lemma}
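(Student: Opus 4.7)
The plan is to run the standard rescaling/compactness/non-degeneracy argument adapted to the poly-harmonic setting. Since $\|\eta_L\|_{*}=1$, the pointwise bound defining the $*$-norm, combined with Propositions~\ref{p1-18-9} and \ref{p2-18-9} (which give $\mu_{i,L}^{(1)}\asymp \mu_{j,L}^{(1)}$ and $|x_{i,L}^{(1)}-P_i|=O(\mu_L^{-2})$), shows that on any fixed ball $B_R(0)$ in the rescaled picture the dominant contribution comes from the $j$-th bubble. Concretely, after the rescaling in \eqref{1-10-4}, the bound on $\eta_L$ translates into
\[
|\tilde\eta_{L,j}(y)|\le \frac{C}{(1+|y|)^{\frac{N-2m}{2}+\tau}}+C\sum_{i\ne j}\frac{(\mu_{j,L}^{(1)})^{-\tau}}{|\tilde P_i-\tilde P_j|^{\frac{N-2m}{2}+\tau}L^{\frac{N-2m}{2}+\tau}},
\]
and the second sum is $o(1)$ thanks to condition \eqref{1-6-1} together with $\tau>k$. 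In particular $\tilde\eta_{L,j}$ is uniformly bounded in $L^\infty(B_R)$.

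Next I would derive the equation for $\tilde\eta_{L,j}$. Rescaling \eqref{2-7-4} and using the identity $U_{x_{j,L}^{(1)},\mu_{j,L}^{(1)}}(z)^{m^{*}-2}=(\mu_{j,L}^{(1)})^{2m}U_{0,1}(\mu_{j,L}^{(1)}(z-x_{j,L}^{(1)}))^{m^{*}-2}$ together with the expansion \eqref{3-18-9} of $c_L$, all factors of $\mu_{j,L}^{(1)}$ cancel and one obtains
\[
(-\Delta)^{m}\tilde\eta_{L,j}(y)=(m^{*}-1)K\!\bigl(x_{j,L}^{(1)}+y/\mu_{j,L}^{(1)}\bigr)\,U_{0,1}(y)^{m^{*}-2}\tilde\eta_{L,j}(y)+o(1),
\]
where the $o(1)$ gathers (i) the contribution of the bubbles $i\ne j$ to $c_L$, which is $O(L^{-(N-2m)})$, (ii) the contribution of $\omega_L^{(1)},\omega_L^{(2)}$, which is $o(1)$ since $\|\omega_L^{(k)}\|_{*}=o(1)$, and (iii) the variation of $K$ on $B_{R/\mu_{j,L}^{(1)}}(x_{j,L}^{(1)})$, which is $O(\mu_L^{-\beta})$ by $(A_3)$. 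With the $L^\infty$-bound at hand, interior Schauder estimates for the poly-harmonic operator, applied iteratively, yield a uniform $C^{2m,\alpha}_{loc}$ bound, and Arzel\`a--Ascoli produces a subsequence converging in $C^{m}_{loc}(\mathbb R^N)$ to some bounded $\tilde\eta_j$ satisfying
\[
(-\Delta)^{m}\tilde\eta_j=(m^{*}-1)K(0)\,U_{0,1}^{m^{*}-2}\tilde\eta_j\quad\text{in }\mathbb R^N,
\]
since $x_{j,L}^{(1)}\to P_j$ and $K(P_j)=K(0)$ by periodicity $(A_2)$.

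The last step identifies the limit. Passing the decay estimate for $\tilde\eta_{L,j}$ to the limit gives $|\tilde\eta_j(y)|\le C(1+|y|)^{-(N-2m)/2}$, which places $\tilde\eta_j$ in the class where the non-degeneracy theorem for the standard poly-harmonic bubble applies (see \cite{wx} and e.g.\ \cite{bartschw,ggs}): the space of bounded solutions of the linearized equation around $U_{0,1}$ is exactly $\mathrm{span}\{\psi_0,\psi_1,\ldots,\psi_N\}$. The factor $K(0)$ on the right-hand side of the limit equation is absorbed by noticing that if $\phi$ solves $(-\Delta)^{m}\phi=(m^{*}-1)K(0)U_{0,1}^{m^{*}-2}\phi$ then a scaling in the $\mu$-parameter reduces it to the $K(0)=1$ case, so the kernel still has the same basis $\{\psi_k\}$, and one may absorb the overall constant into the coefficients $b_{j,k}$. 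Hence $\tilde\eta_j=\sum_{k=0}^{N}b_{j,k}\psi_k$, and uniqueness of the limit propagates along the full sequence.

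The step I expect to be the main obstacle is not the convergence itself but the quantitative control of the tail contribution of the \emph{infinitely many} other bubbles and of the error terms $\omega_L^{(k)}$ when writing the limiting equation: it must be verified that these aggregate into a genuine $o(1)$ in $L^\infty(B_R)$, not merely in some weighted norm. The weight $\sigma$ built into \eqref{100-16-1}--\eqref{101-16-1} and the strict inequality $\tau>k$ guaranteed by \eqref{1-6-1} are exactly what make the summation over $i\ne j$ converge with a decaying prefactor in $\mu_{j,L}^{(1)}$, mirroring the treatment in \cite{LWX}; everything else is routine blow-up analysis for the poly-harmonic operator.
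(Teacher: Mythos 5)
Your proposal is correct and follows essentially the same route as the paper's own proof: uniform local boundedness of the rescaled quotient from the $\ast$-norm bound, passage to a limit by elliptic regularity and Arzel\`a--Ascoli, identification of the limiting linearized equation, and then the non-degeneracy of $U_{0,1}$ to conclude the limit lies in $\mathrm{span}\{\psi_0,\dots,\psi_N\}$. One small remark: the paper tacitly normalizes $K(0)=1$ (visible in the $|K(y)-1|$ estimates such as \eqref{7-2-9} and \eqref{8-2-9}), so the $K(0)$ factor in your limiting equation disappears directly; your ``absorb by rescaling the $\mu$-parameter'' remark is unnecessary under that normalization and, as stated, is slightly imprecise since rescaling changes $\psi_0$ rather than merely re-labeling coefficients.
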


\begin{proof}
 In view of
$|\tilde \eta_{L,j}|\le C$ in any compact subset of $\mathbb R^N$, we may assume that $\tilde \eta_{L,j}\to \xi_j$ in
$ C_{loc}(\mathbb R^N)$.   Then it follows from the elliptic regularity theory and \eqref{2-7-4}  and \eqref{3-18-9} that
 $\xi_j$ satisfies
\begin{equation}\label{3-8-4}
(-\Delta)^m \xi_j = (m^*-1) U_{0, 1}^{m^*-2}\xi_j, \quad \text{in}\; \mathbb R^N,
\end{equation}
which combining with the non-degeneracy of $U_{0,1}$ gives
\begin{equation}\label{4-8-4}
\xi_j =\sum_{k=0}^Nb_{j, k}\psi_k.
\end{equation}
\end{proof}

 Let $G(y, x)=C_m|y-x|^{2m-N}$ be the corresponding Green's  function of $(-\Delta)^m$ in $\R^N$.

\begin{lemma}\label{l1-9-4}

We have the following estimate:
\begin{equation}\label{1-9-4}
\begin{split}
  \eta_L(x) = &  \sum_{j=1}^\infty\sum_{|\alpha|=0}^{2m-1} A_{j, L, \alpha} \partial^\alpha G(x^{(1)}_{j,  L}, x)
+O\bigl( \frac{1 }{\mu_L^{\frac{N+2m}2-\theta}} \bigr)\\
:=& \sum_{j=1}^\infty F_{j, m, L}(x)+O\bigl( \frac{1 }{\mu_L^{\frac{N+2m}2-\theta}} \bigr),\quad
\text{\hbox{in} $C^{2m-1}\bigl(\mathbb R^N \setminus \cup_{j=1}^\infty  B_{2\sigma}(x^{(1)}_{j, L})\bigr)$},
\end{split}
\end{equation}
where $\sigma>0$ is any   small constant, $\partial^\alpha  G(y, x)=\frac{\partial^{|\alpha|} G(y,x)}{\partial y_1^{\alpha_{i, 1}}\cdots\partial y_N^{\alpha_{i, N}}}$,
$\alpha=(\alpha_{i,1},\cdots, \alpha_{i,N}   )$, and the constants $A_{j, L, \alpha}$
satisfy the following estimates:
\begin{equation}\label{Ah}
A_{j, L, 0}= \int_{B_\sigma(x^{(1)}_{j, L})}   f(y, u^{(1)}_{L}(y), u^{(2)}_{L}(y))\,dy=o\bigl(\frac1{\mu_L^{\frac{N-2m}2}}\bigr),
\end{equation}
\begin{equation}\label{Eh}
A_{j, L, \alpha}=O\bigl(\frac1{\mu_L^{\frac{N-2m}2+|\alpha|}}\bigr),\quad |\alpha|\ge 1.
\end{equation}
\end{lemma}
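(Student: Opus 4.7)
The natural starting point is the Green's function representation
\begin{equation*}
\eta_L(x) \;=\; \int_{\R^N} G(y, x)\, f(y, u^{(1)}_L, u^{(2)}_L)\, dy,
\end{equation*}
valid because $\|\eta_L\|_* = 1$ together with \eqref{100-16-1} gives enough polynomial decay of $\eta_L$, and hence of $f$ via \eqref{nn1-11-9}, for the convolution to make sense. Fix $x$ with $|x - x^{(1)}_{j,L}| \geq 2\sigma$ for all $j$, and split the integral as $\sum_j \int_{B_\sigma(x^{(1)}_{j,L})} + \int_{\R^N \setminus \bigcup_j B_\sigma(x^{(1)}_{j,L})}$.

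On each ball $B_\sigma(x^{(1)}_{j,L})$, Taylor expand $G(\cdot, x)$ about $x^{(1)}_{j,L}$ up to order $2m-1$:
\begin{equation*}
G(y, x) \;=\; \sum_{|\alpha| \leq 2m-1} \frac{(y - x^{(1)}_{j,L})^\alpha}{\alpha!}\, \partial^\alpha G(x^{(1)}_{j,L}, x) \;+\; R_{j, 2m}(y, x),
\end{equation*}
with remainder $|R_{j, 2m}(y, x)| \leq C|y - x^{(1)}_{j,L}|^{2m}/\sigma^{N}$, since $|\partial^\beta G(y', x)| \leq C|y' - x|^{2m - N - |\beta|}$ and $|y' - x| \geq \sigma$ for $y' \in B_\sigma(x^{(1)}_{j,L})$. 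This identifies the coefficients $A_{j, L, \alpha} = \frac{1}{\alpha!}\int_{B_\sigma(x^{(1)}_{j,L})} (y - x^{(1)}_{j,L})^\alpha f(y)\, dy$; in particular $A_{j,L,0} = \int_{B_\sigma(x^{(1)}_{j,L})} f(y)\, dy$ as claimed in (Ah).

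To bound $A_{j, L, \alpha}$, I would combine the pointwise estimate $|f(y)| \leq C\, U^{m^* - 2}_{x^{(1)}_{j,L}, \mu^{(1)}_{j,L}}(y)\, |\eta_L(y)|$ plus lower-order terms (coming from \eqref{3-18-9}) with $\|\eta_L\|_* = 1$, and exploit the crucial inequality valid on $B_\sigma(x^{(1)}_{j,L})$,
\begin{equation*}
\sigma(y) \;\leq\; C\Big(\frac{1 + \mu^{(1)}_{j,L}|y - x^{(1)}_{j,L}|}{\mu^{(1)}_{j,L}}\Big)^{\tau - 1},
\end{equation*}
where the assumption $\tau > 1$ (from $\tau > k \geq 1$) enters. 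Rescaling $z = \mu^{(1)}_{j,L}(y - x^{(1)}_{j,L})$ then produces $|A_{j, L, \alpha}| \leq C\, \mu_L^{-(N-2m)/2 - |\alpha| - (\tau - 1)}$; for $|\alpha| \geq 1$ this is stronger than (Eh), and for $|\alpha| = 0$ the gain $\tau - 1 > 0$ yields precisely the $o$-improvement (Ah).

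The main obstacle is to match the two remainder pieces to the advertised $O(\mu_L^{-(N+2m)/2 + \theta})$ uniformly in $x$ and in the infinitely many bubbles. The Taylor-remainder sum $\sum_j \int_{B_\sigma(x^{(1)}_{j,L})} R_{j, 2m}(y, x) f(y)\, dy$ is controlled by the same rescaling (each term of order $\mu_L^{-(N+2m)/2 - (\tau - 1)}$), together with uniform summability of the configuration-dependent series, which rests on \eqref{1-6-1}. The exterior integral $\int_{\R^N \setminus \bigcup_j B_\sigma(x^{(1)}_{j,L})} G(y, x) f(y)\, dy$ is handled by reproducing the convolution estimate \eqref{2.82} applied to $|f|$ controlled via \eqref{nn1-11-9}: outside the bubbles $U^{m^*-2}$ is small, so $f$ has extra decay, and convolution with the Riesz kernel of $(-\Delta)^m$ then produces the required smallness by the same mechanism as in Lemma~\ref{l1-8-1}. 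Throughout, the delicate point is uniformity in the index $j$, which is precisely what the clustering hypothesis \eqref{1-6-1} and the weight $\sigma$ built into the norms \eqref{100-16-1}--\eqref{101-16-1} are designed to secure.
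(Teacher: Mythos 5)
Your overall structure — Green's-function representation, per-ball Taylor expansion of $G(\cdot, x)$ to order $2m-1$, and estimate of the exterior integral — is exactly the paper's. The genuine divergence is in how you prove the $o$-improvement \eqref{Ah}: you propose to use the weight $\sigma(y)$ built into $\|\cdot\|_*$ to squeeze out an extra power $\mu_L^{-(\tau-1)}$, whereas the paper gets \eqref{Ah} \emph{qualitatively} by rescaling and invoking Lemma~\ref{l1-10-4} (the convergence $\tilde\eta_{L,j}\to\sum_k b_{j,k}\psi_k$ in $C^m_{\mathrm{loc}}$) together with the orthogonality identities $\int_{\mathbb R^N}U_{0,1}^{m^*-2}\psi_k\,dy=0$. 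Your route avoids the compactness step and is in principle a simplification; since Lemma~\ref{l1-10-4} is needed elsewhere anyway, it is not a large saving, but the idea is sound in spirit.

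However, your claimed decay rates are not correct in the regime $N>2m+2$ assumed throughout. The Taylor-remainder claim ``$\mu_L^{-(N+2m)/2-(\tau-1)}$'' cannot be right: after rescaling $z=\mu^{(1)}_{j,L}(y-x^{(1)}_{j,L})$ and substituting the bound $|f^*_L|\le C\sigma(y)\mu_j^{(N+2m)/2}(1+\mu_j|y-x_j|)^{-(N+2m-\vartheta)}$ with $\sigma(y)\le C\bigl(\tfrac{1+\mu_j|y-x_j|}{\mu_j}\bigr)^{\tau-1}$, one is left with $\mu_j^{-(N+2m)/2-(\tau-1)}\int_{|z|\le\mu_j\sigma}|z|^{2m}(1+|z|)^{-(N+6m)/2-1}\,dz$, and the latter integral diverges precisely because $N>2m+2$; the boundary contribution restores a factor $(\mu_j\sigma)^{(N-2m-2)/2}$ which exactly cancels the $\sigma$-gain back to $\mu_L^{-(N+2m)/2+\theta}$ (matching the paper's \eqref{(IV)}). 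The same phenomenon affects $A_{j,L,\alpha}$: the uniform $\mu_L^{-(\tau-1)}$-gain holds only when $N<6m+2$, and for $N\ge 6m+2$ the rescaled integral again diverges at the edge of $B_\sigma$, yielding the rate $\mu_L^{-(N+2m)/2+\vartheta}$ instead. Luckily, in both regimes the resulting bound is still $o(\mu_L^{-(N-2m)/2})$ for $\alpha=0$ and $O(\mu_L^{-(N-2m)/2-|\alpha|})$ for $1\le|\alpha|\le 2m-1$ (since $|\alpha|<2m-\vartheta$), so the conclusion of the lemma does follow from a corrected version of your argument — but you should track the tail of the rescaled integral carefully rather than asserting a uniform $(\tau-1)$-gain. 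Also note the contribution of the second summand in \eqref{nn1-11-9} (the $o(1)$ term arising from $\omega_L^{(i)}$) decays only like $\mu_L^{-N+\vartheta}$ in $y$; its ball integral is $o(1)\cdot\mu_L^{-(N-2m)/2+\vartheta}$, so one needs the explicit power-law bound on $\|\omega_L\|_*$ from Proposition~\ref{p1-6-3} (with $\vartheta$ chosen small compared to $(m^*-2)\min(2m,\,\beta-\tau+1)$) to absorb the $\mu_L^{+\vartheta}$ — a point your sketch leaves implicit.
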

\begin{proof}
Denote $ f_L^*(y)=   f(y, u^{(1)}_{L}(y), u^{(2)}_{L}(y))$. We have
\begin{equation}\label{02-10-4}
\begin{split}
\eta_L(x)
= &\int_{\mathbb R^N} G(y,x) f_L^*(y)\,dy\\
=& \sum_{j=1}^\infty \int_{ B_\sigma(x^{(1)}_{j, L})} G(y,x) f_L^*(y)\,dy+
\int_{\mathbb R^N\setminus \cup_j B_\sigma(x^{(1)}_{j, L})} G(y,x) f_L^*(y)\,dy\\
=& \sum_{j=1}^\infty \sum_{|\alpha|=0}^{2m-1} A_{j, L, \alpha} \partial^\alpha G(x^{(1)}_{j,  L}, x)
 \\
 &+
 \sum_{j=1}^\infty O\Bigl(\int_{B_\sigma(x^{(1)}_{j, L})}|y-x^{(1)}_{j, L}|^{2m}| f^*_L(y)|\,dy\Bigr)+ \int_{\mathbb R^N\setminus \cup_j B_\sigma(x^{(1)}_{j, L})} G(y,x) f_L^*(y)\,dy.
 \end{split}
\end{equation}

 For $y\in \mathbb R^N\setminus \cup_j B_\sigma(x^{(1)}_{j, L})$,
noting that $\tau=\frac{N-2m}2-\vartheta$ for $\vartheta>0$ small, similarly to \eqref{nn1-11-9}
and \eqref{10-22-1},
 we
 find
\[
 \begin{split}
 |f_L^*(y)|
 \le &  C \mu_L^{-\tau}\bigl( \frac1{\mu_L^{2m}}+(\mu_L^{-\tau}\|\omega^{(1)}_L\|_*)^{m^*-2}\bigr) \Bigl(
 \sum_{j=1}^\infty \frac1{|y- x^{(1)}_{j, L}|^{\frac{N-2m}2+\tau}}\Bigr)^{m^*-1}\\
 \le &C \mu_L^{-\frac{N+2m}2+\vartheta}
 \sum_{j=1}^\infty \frac1{|y- x^{(1)}_{j, L}|^{\frac{N+2m}2+\tau}}.
 \end{split}
 \]
Thus, we  have
\begin{equation}\label{1-17-9}
 \int_{\mathbb R^N\setminus \cup_j B_\sigma(x^{(1)}_{j, L})} G(y,x) f_L^*(y)\,dy\le
 C \mu_L^{-\frac{N+2m}2+\vartheta}
 \sum_{j=1}^\infty \frac1{|x- x^{(1)}_{j, L}|^{\frac{N-2m}2+\tau}}\le
 \frac{C }{\mu_L^{\frac{N+2m}2-\vartheta}}.
\end{equation}
Similarly, by Lemma~\ref{lem:3}
\begin{equation}\label{(IV)}
\begin{split}
&   \int_{B_\sigma(x^{(1)}_{j, L})}|y-x_{j, L}^{(1)}|^{2m} |f^*_L|\\
\le & C\int_{B_\sigma(x^{(1)}_{j, L})}|y-x_{j, L}^{(1)}|^{2m}
\frac{(\mu^{(1)}_{j,L})^{\frac{N+2m}2}}{ (1+\mu^{(1)}_{j,L}|y- x^{(1)}_{j, L}|)^{(\frac{N-2m}2+\tau)(m^*-1)}}
\\
\le &  \frac C{(\mu^{(1)}_{j,L})^{\frac{N+2m}2-(m^*-1)\vartheta}}
\int_{B_\sigma(x^{(1)}_{j, L})}
\frac{1}{ |y- x^{(1)}_{j, L}|^{N-\vartheta(m^*-1)}}
\le \frac C{\mu_L^{\frac{N+2m}2-(m^*-1)\vartheta}}.
\end{split}
\end{equation}

Inserting \eqref{(IV)} and \eqref{1-17-9} into  \eqref{02-10-4}, we obtain \eqref{1-9-4}.
Similarly, we can prove that \eqref{1-9-4} holds in $C^{2m-1}\bigl(\mathbb R^N \setminus \cup_{j=1}^\infty  B_{2\sigma}(x^{(1)}_{j, L})\bigr)$.
It remains to estimate $A_{j, L, \alpha}$.
\begin{equation}\label{A}
\begin{split}
A_{j, L, 0}=&\int_{B_\sigma(x^{(1)}_{j, L})}   f(y, u^{(1)}_{L}(y), u^{(2)}_{L}(y))\,dy\\
=& \frac1{(\mu_{j, L}^{(1)})^{N-2m}}\int_{B_R(0)} \frac1{(\mu_{j, L}^{(1)})^{2m}}
f^*_L( \frac1{\mu^{(1)}_{j, L}} y+ x^{(1)}_{j, L} )  \,dy\\
&+\frac1{(\mu_{j, L}^{(1)})^{\frac{N-2m}2}}
O\Bigl( \int_{ B_{\sigma \mu_{j,L}^{(1)}}(0)\setminus B_R(0) }
 \frac1{|y|^{(N-2m-\vartheta)(m^*-1)}}\,dy \Bigr)
\\
=& \frac1{(\mu_{j, L}^{(1)})^{\frac{N-2m}2}}\Bigl( m^*-1)\int_{\mathbb R^N} U_{0,1}^{2^*(m)-2}
\sum_{k=0}^N b_{j, k} \psi_k  +o(1)\Bigr)=o\bigl(\frac1{\mu_L^{\frac{N-2m}2}}\bigr).
\end{split}
\end{equation}
If $|\alpha|\ge 1$, then
\begin{equation}\label{10-16-10}
|A_{j, L, \alpha}|\le C\int_{B_\sigma(x^{(1)}_{j, L})} |y-x^{(1)}_{j, L}|^{|\alpha|}|   f(y, u^{(1)}_{L}(y), u^{(2)}_{L}(y))|=O\bigl(\frac1{\mu_L^{\frac{N-2m}2+|\alpha|}}\bigr).
\end{equation}
\end{proof}

 Using \eqref{1-4-9} and \eqref{2-4-9}, we can deduce the following
  identities:
\begin{equation}\label{14-11-9}
 \begin{split}
&\int_{\partial B_{d}(x^{(1)}_{j, L})}f_{m, i}(\eta_L, u^{(1)}_L)
+\int_{\partial B_{d}(x^{(1)}_{j, L})}f_{m, i}( u^{(2)}_L, \eta_L)\\
=&
\int_{\partial B_d(x^{(1)}_{j, L})}K(y) C_L(y)  \eta_{L}\nu_i-
\int_{ B_d(x^{(1)}_{j, L})}\frac{\partial K(y)}{\partial y_i} C_L(y) \eta_{L},
\end{split}
\end{equation}
and
\begin{equation}\label{15-11-9}
 \begin{split}
&\int_{\partial B_{d}(x^{(1)}_{j, L})}g_{m}(\eta_L, u^{(1)}_L)
+\int_{\partial B_{d}(x^{(1)}_{j, L})}g_{m}( u^{(2)}_L, \eta_L)
\\
=&
\int_{\partial B_d(x^{(1)}_{j, L})}K(x) C_L(y) \eta_{L} \bigl\langle y-x^{(1)}_{j, L}, \nu\bigr\rangle-
\int_{ B_d(x^{(1)}_{j, L})} \bigl\langle \nabla K(y),  y-x^{(1)}_{j, L}\bigr\rangle C_L(y) \eta_{L},
\end{split}
\end{equation}
where $C_L(y) =\int_0^1 ( t  u^{(1)}_{L}+ (1-t)  u^{(2)}_{L})^{m^*-1}\,dt$  and $d>0$ is a   small constant.

 Similar to
\eqref{3-18-9}, we can deduce
\begin{equation}\label{5-18-9}
 C_{L}(y)= U^{m^*-1}_{ x^{(1)}_{j, L}, \mu^{(1)}_{j, L}}
 +O\Bigl( \bigl( \frac1{\mu_L} U_{ x^{(1)}_{j, L}, \mu^{(1)}_{j, L}}+\frac1{\mu_L^{\frac{N-2m}2}L^{N-2m}}+|\omega_L^{(1)}|+|\omega_L^{(2)}|\bigr)^{m^*-1}
 \Bigr),
 \;\; y\in B_d(x^{(1)}_{j, L}).
\end{equation}
To estimate the boundary terms in \eqref{14-11-9} and \eqref{15-11-9}, we need the following estimates
which can be deduced from Proposition~\ref{p1-6-3}  and Lemma~\ref{l1-9-4}.
\begin{equation}\label{61-4}
\begin{split}
u_L^{(k)}(y)=&\frac{\tilde C_m}{(\mu_{j, L}^{(k)})^{\frac{N-2m}2}} \sum_{i=0}^{m-1}
\frac{\alpha_i}{(\mu_{j, L}^{(k)})^{2i}|y-x^{(k)}_{j, L}|^{N-2m+2i}}+O\bigl(\frac1{\mu_L^{\frac{N-2m}2} L^{N-2m}}+\frac1{\mu_L^{\frac{N+2m}2}}\bigr)\\
=:&V_L^{(k)}+O\bigl(\frac1{\mu_L^{\frac{N-2m}2} L^{N-2m}}+\frac1{\mu_L^{\frac{N+2m}2}}\bigr)
, \quad y\in \partial B_d(x^{(k)}_{j, L}),
\end{split}
\end{equation}
and
\begin{equation}\label{eta}
  \eta_L(y) = F_{j, m, L}(y)
+O\bigl( \frac{1 }{\mu_L^{\frac{N+2m}2-\theta}} \bigr)+o\bigl(\frac{1}{\mu^{\frac{N-2m}2}L^{N-2m}} \bigr), \quad y\in \partial B_d(x^{(1)}_{j, L}).
\end{equation}

\begin{proof}[Proof of Theorem~\ref{main}    ]

 {\bf Step~1.}  We prove $b_{j, k}=0$, $k=1, \cdots, N$. We need to estimate each term in \eqref{14-11-9}. From
\eqref{5-18-9}, we obtain
\begin{equation}\label{16-11-9}
\int_{\partial B_d(x^{(1)}_{j, L})}K(y) C_L(y)  \eta_{L}\nu_i=O\bigl(
\frac1{\mu_L^{N}}\bigr),
\end{equation}
and
\begin{equation}\label{17-11-9}
 \begin{split}
&-
\int_{ B_d(x^{(1)}_{j, L})}\frac{\partial K(y)}{\partial y_i} C_L(y) \eta_{L}\\
=&-\frac{\beta a_i}{(\mu_{j,L}^{(1)})^{\beta-1+\frac{N+2m}2}}
\int_{ B_{d \mu_{j,L}^{(1)}  }(0)}|y_i|^{\beta-2} y_i C_L(\frac1{\mu^{(1)}_{j, L}}y+ x^{(1)}_{j, L})\tilde \eta_{L,j}+O\bigl(
\frac1{\mu_{ L}^{\beta}}\bigr)\\
=&-\frac{\beta a_i}{(\mu^{(1)}_{j, L})^{\beta-1}}\Bigl(
\int_{ B_R(0)}|y_i|^{\beta-2} y_i U_{0, 1}^{m^*-1}\sum_{k=0}^N b_{j, k} \psi_k+ o(1)\Bigr)+O\bigl(
\frac1{\mu_L^{\beta}}\bigr)\\
=&-\frac{\beta a_i }{(\mu^{(1)}_{j, L})^{\beta-1}}\Bigl(
b_{j, i}\int_{ \mathbb R^N}| y_i|^{\beta-2} y_i U_{0,1}^{m^*-1} \psi_i+ o(1)\Bigr)+O\bigl(
\frac1{\mu_L^{\beta}}\bigr).
\end{split}
\end{equation}

 Combining \eqref{16-11-9}  and \eqref{17-11-9}, we are led to
\begin{equation}\label{10-14-9}
 \begin{split}
\text{RHS of \eqref{14-11-9}}
= -\frac{\beta a_i} {(\mu^{(1)}_{j, L})^{\beta-1}}\Bigl(
b_{j, i}\int_{ \mathbb R^N}|y_i|^{\beta-2} y_i U_{0,1}^{2^*(m)-1} \psi_i+ o(1)\Bigr)+O\bigl(
\frac1{\mu_L^{\beta}}\bigr).
\end{split}
\end{equation}
To estimate the left hand side of \eqref{14-11-9}, using \eqref{61-4},  we have

\begin{equation}\label{30-15-10}
 \begin{split}
 \text{LHS of \eqref{14-11-9}}
=&\int_{\partial B_d(x^{(1)}_{j, L})}f_{m, i}(F_{j, m, L}, V^{(1)}_L)
+\int_{\partial B_d(x^{(1)}_{j, L})}f_{m, i}( V^{(2)}_L, F_{j, m, L})\\
&+o\bigl(\frac1{\mu_L^{N-2m} L^{N-2m}}\bigr)+O\bigl(\frac1{\mu_L^{N}}\bigr).
\end{split}
\end{equation}

Now we claim that

\begin{equation}\label{60-16-10}
 \int_{\partial B_d(x^{(1)}_{j, L})}f_{m, i}(F_{j, m, L}, V^{(1)}_L)=0,\quad
\int_{\partial B_d(x^{(1)}_{j, L})}f_{m, i}( V^{(2)}_L, F_{j, m, L})=0.
\end{equation}
This  gives

\begin{equation}\label{61-16-10}
\text{LHS of \eqref{14-11-9}}=o\bigl(\frac1{\mu_L^{N-2m} L^{N-2m}}\bigr)+O\bigl(\frac1{\mu_L^{N}}\bigr).
\end{equation}
Hence, \eqref{10-14-9} and \eqref{61-16-10} imply $b_{j, i}=0$, $i=1, \cdots, N$.

 It remains to prove \eqref{60-16-10}. We just prove the second integral in \eqref{60-16-10}
is zero. Note that this integral is a linear combination of the following integrals
\begin{equation}\label{nn60-16-10}
\int_{\partial B_d(x^{(1)}_{j, L})}f_{m, i}\bigl(\partial^\alpha G(x^{(2)}_{j,  L} ,x),
\frac{1}{|y-x^{(2)}_{j, L}|^{N-2m+2i}} \bigr).
\end{equation}
So, we just need to prove that the integral defined in \eqref{nn60-16-10} is zero.

  Since  $\frac{1}{|y-x^{(2)}_{j, L}|^{N-2m+2i}}$ and $\partial^\alpha G(x^{(2)}_{j,  L}, x)$ are $m$-harmonic in $
B_d(x^{(1)}_{j, L})\setminus B_\theta(x^{(2)}_{j, L})$, by Proposition~\ref{p1-13-10}, we find
\begin{equation}\label{62-16-10}
\begin{split}
&\int_{\partial B_d(x^{(1)}_{j, L})}f_{m, i}\bigl(\partial^\alpha G(x^{(2)}_{j,  L} ,x),
\frac{1}{|y-x^{(2)}_{j, L}|^{N-2m+2i}} \bigr)\\
=&
\int_{\partial B_\theta(x^{(2)}_{j, L})}f_{m, i}\bigl(\partial^\alpha G(x^{(2)}_{j,  L} ,x),
\frac{1}{|y-x^{(2)}_{j, L}|^{N-2m+2i}} \bigr).
\end{split}
\end{equation}
By using the same arguments as in \eqref{8-15-10}--\eqref{11-15-10}, we can prove
\begin{equation}\label{62-16-10}
\int_{\partial B_\theta(x^{(2)}_{j, L})}f_{m, i}\bigl(\partial^\alpha G(x^{(2)}_{j,  L} ,x),
\frac{1}{|y-x^{(2)}_{j, L}|^{N-2m+2i}} \bigr)=0.
\end{equation}

{\bf Step~2}. We prove $b_{j, 0}=0$. It is easy to deduce
\begin{equation}\label{1-12-9}
 \begin{split}
&\text{RHS of \eqref{15-11-9}}\\
= &-\frac{\beta}{(\mu_{j,L}^{(1)})^{\beta}}\Bigl(
\int_{ \mathbb R^N}\sum\limits_{i=1}^N a_i|y_i|^\beta U_{0, 1}^{m^*-1} \sum_{k=0}^N b_{j, k} \psi_k+o(1)\Bigr)+O\bigl(
\frac1{\mu_L^{N}}\bigr)\\
=&-\frac{\beta\sum\limits_{i=1}^N a_i}{N(\mu_{j,L}^{(1)})^{\beta}}\Bigl(b_{j, 0}
\int_{ \mathbb R^N}|y|^\beta U_{0,1}^{m^*-1}  \psi_0+o(1)\Bigr)+O\bigl(
\frac1{\mu_L^{N}}\bigr).
\end{split}
\end{equation}

It follows from  \eqref{61-4} and \eqref{eta} that

\begin{equation}\label{19-11-9}
\begin{split}
 \text{LHS of \eqref{15-11-9}}
=&\int_{\partial B_d(x^{(1)}_{j, L})}g_{m}(F_{j, m, L}, V^{(1)}_L)
+\int_{\partial B_d(x^{(1)}_{j, L})}g_{m}( V^{(2)}_L, F_{j, m, L})\\
&+o\bigl(\frac1{\mu_L^{N-2m} L^{N-2m}}\bigr)+O\bigl(\frac1{\mu_L^{N}}\bigr).
\end{split}
\end{equation}
 Similar to the proof of \eqref{60-16-10}   in {\bf Step~1}, we can prove
\begin{equation}\label{66-16-10}
 \int_{\partial B_d(x^{(1)}_{j, L})}g_{m}(F_{j, m, L}, V^{(1)}_L)=0,\quad
\int_{\partial B_d(x^{(1)}_{j, L})}g_{m}( V^{(2)}_L, F_{j, m, L})=0.
\end{equation}
Therefore
\begin{equation}\label{67-16-10}
\text{LHS of \eqref{15-11-9}}
=o\bigl(\frac1{\mu_L^{N-2m} L^{N-2m}}\bigr)+O\bigl(\frac1{\mu_L^{N}}\bigr).
\end{equation}
Combining \eqref{1-12-9}  and \eqref{67-16-10},  we are led to
\begin{equation}\label{4-12-9}
 b_{j, 0}
\int_{ \mathbb R^N}|y|^\beta U_{0, 1}^{m^*-1}  \psi_0=o(1).
\end{equation}
This gives $b_{j, 0}=0$.
\end{proof}

\begin{proof}[Proof of Theorem~\ref{th1-7-1}]
 To prove that $u_L$ is periodic in $y_1$,  we let
\[
 {v_L(y)}= u_L(y_1-L,y_2,\cdots,y_N).
 \]
  Then, $v_L$ is
 a bubbling solution whose blow-up set is the same as that of $u_L$. By the local uniqueness,
 $v_L=u_L$. Similarly, we can prove that $u_L$ is periodic in $y_j$, $j=2, \cdots, k$.
\end{proof}

\appendix

\numberwithin{equation}{section}

\section{Some basic estimates}
 In this section, we give some technical lemmas. Throughout  Appendixes  A, B, C and D, we will  use the same notations as before and we also use the same  $C$
to denote  different constants unless otherwise stated. The  proof of the following two
 Lemmas can be found in \cite{WY}.

\begin{lemma}\label{lem:1}
Let $x_i, x_j\in \mathbb{R}^N, x_i\not =x_j$, $i\ne j$, it holds
\[
\begin{split}
&\frac{1}{(1+|y-x_{i}|)^{\alpha}(1+|y-x_{j}|)^{\beta}}
\\
&
\leq\frac{C}{(1+|x_{i}-x_{j}|)^{\sigma}}
(\frac{1}{(1+|y-x_{j}|)^{\alpha+\beta-\sigma}}+\frac{1}{(1+|y-x_{i}|)^{\alpha+\beta-\sigma}}),
\end{split}
\]
where $\alpha$ and $\beta$ are some positive constants, $0<\sigma\le \min (\alpha, \beta)$.

\end{lemma}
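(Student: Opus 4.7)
The plan is to split $\mathbb{R}^N$ into the two regions $A=\{y:|y-x_i|\le |y-x_j|\}$ and $B=\{y:|y-x_j|\le|y-x_i|\}$, and then in each region pay the $\sigma$-cost from the factor whose base point is \emph{farther} from $y$. The key geometric input is the triangle inequality: on $A$ we have $|x_i-x_j|\le|y-x_i|+|y-x_j|\le 2|y-x_j|$, hence $1+|y-x_j|\ge \tfrac12(1+|x_i-x_j|)$, and symmetrically on $B$ we have $1+|y-x_i|\ge \tfrac12(1+|x_i-x_j|)$.

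On the region $A$, I would write the splitting
\[
(1+|y-x_j|)^{-\beta}=(1+|y-x_j|)^{-\sigma}\,(1+|y-x_j|)^{-(\beta-\sigma)},
\]
and then use the triangle-inequality bound on the first factor to pick up $(1+|x_i-x_j|)^{-\sigma}$ up to a constant, while on the second factor I use $|y-x_j|\ge|y-x_i|$ (valid on $A$) to trade it for $(1+|y-x_i|)^{-(\beta-\sigma)}$, which is legitimate because $\beta-\sigma\ge 0$. Multiplying by the unchanged factor $(1+|y-x_i|)^{-\alpha}$ then produces exactly the bound $C(1+|x_i-x_j|)^{-\sigma}(1+|y-x_i|)^{-(\alpha+\beta-\sigma)}$, which is one of the two terms on the right-hand side. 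On $B$, the roles of $\alpha,\beta$ and $x_i,x_j$ swap, yielding the other term. Adding the two estimates (each extended to all of $\mathbb{R}^N$) gives the claimed inequality.

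There is no real obstacle here: the argument is a textbook dyadic/triangle-inequality split, and the hypothesis $0<\sigma\le\min(\alpha,\beta)$ is used precisely to ensure that both $\alpha-\sigma$ and $\beta-\sigma$ are nonnegative, which is what allows the monotonicity step $(1+|y-x_j|)^{-(\beta-\sigma)}\le(1+|y-x_i|)^{-(\beta-\sigma)}$ on $A$. The only mild care needed is in handling the "$1+$" shifts uniformly, which is absorbed into the constant $C$ (independent of $x_i,x_j,y$).
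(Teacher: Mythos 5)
Your proof is correct, and it is the standard region-splitting argument that one finds for this lemma in the Wei--Yan paper that the text cites in lieu of a proof. Splitting $\mathbb{R}^N$ into $\{|y-x_i|\le|y-x_j|\}$ and its complement, peeling off a factor $(1+|y-x_j|)^{-\sigma}$ (resp.\ $(1+|y-x_i|)^{-\sigma}$) and bounding it via the triangle inequality $1+|y-x_j|\ge\tfrac12(1+|x_i-x_j|)$, and then using monotonicity on the leftover exponent $\beta-\sigma\ge0$ (resp.\ $\alpha-\sigma\ge0$) to absorb everything onto a single base point, is exactly the intended argument.
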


\vskip8pt

\begin{lemma}\label{lem:2}
For any constant $0<\sigma<N-2m$, there exists  a constant $C=C(N, \sigma)>1$ such
that
$$
 \int_{\mathbb{R}^{N}}\frac{dz}{|y-z|^{N-2m}(1+|z|)^{2m+\sigma}}\leq\frac{C}{(1+|y|)^{\sigma}}.$$

\end{lemma}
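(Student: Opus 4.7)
My plan is to establish the bound by the standard technique of splitting the integral according to where the two competing singularities/decays dominate.

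First I would deal with the trivial range $|y|\le 1$. In that regime the right-hand side $C(1+|y|)^{-\sigma}$ is bounded below by a positive constant, while the integral itself is finite because the integrand has only a locally integrable singularity at $z=y$ (order $N-2m<N$) and decays like $|z|^{-N-\sigma}$ at infinity (since $0<\sigma<N-2m$ implies $2m+\sigma<N$, but combined with the $|y-z|^{-(N-2m)}$ factor the tail is $|z|^{-(N+\sigma)}$, integrable). So the inequality holds trivially by choosing $C$ large enough.

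For $|y|\ge 1$, I would decompose $\mathbb R^N$ into three regions, adapted to the size of $|z|$ and $|z-y|$ relative to $|y|$:
\begin{equation*}
A_1=\{|z-y|\le |y|/2\},\qquad A_2=\{|z|\le |y|/2\},\qquad A_3=\mathbb R^N\setminus(A_1\cup A_2).
\end{equation*}
On $A_1$ one has $|z|\sim|y|$, hence $(1+|z|)^{-(2m+\sigma)}\le C|y|^{-(2m+\sigma)}$, and the remaining integral $\int_{|z-y|\le|y|/2}|z-y|^{-(N-2m)}\,dz\le C|y|^{2m}$, giving a total of $C|y|^{-\sigma}$. On $A_2$ one has $|z-y|\ge|y|/2$, so $|z-y|^{-(N-2m)}\le C|y|^{-(N-2m)}$; since $2m+\sigma<N$, the integral $\int_{|z|\le|y|/2}(1+|z|)^{-(2m+\sigma)}\,dz\le C|y|^{N-2m-\sigma}$, again giving $C|y|^{-\sigma}$. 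On $A_3$ I would split further into $\{|y|/2<|z|\le 2|y|\}$, where both factors are comparable to $|y|$ and the volume is $C|y|^N$, and $\{|z|\ge 2|y|\}$, where $|z-y|\sim|z|$ makes the integrand $\sim|z|^{-(N+\sigma)}$; both pieces also contribute $C|y|^{-\sigma}$.

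Summing the three contributions gives the bound $C|y|^{-\sigma}\le C'(1+|y|)^{-\sigma}$ for $|y|\ge 1$, which combined with the $|y|\le 1$ case completes the proof. The only delicate point is the condition $\sigma<N-2m$, which is used precisely in estimating the integral over $A_2$: without it, $\int_{|z|\le|y|/2}(1+|z|)^{-(2m+\sigma)}\,dz$ would not grow like $|y|^{N-2m-\sigma}$ but like a logarithm or a constant, and the resulting bound would fail to match $|y|^{-\sigma}$. This, together with keeping track that every decomposition region contributes exactly the same power $|y|^{-\sigma}$, is the essence of the argument.
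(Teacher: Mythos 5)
Your proof is correct, and it is the standard annuli-decomposition argument that such weighted convolution estimates always use. Note that the paper does not actually supply a proof of this lemma; it simply states that ``the proof of the following two Lemmas can be found in \cite{WY}.'' So there is no in-text proof to compare against, but the argument in \cite{WY} (Wei--Yan) is of precisely this type: after disposing of the compact region $|y|\le 1$, one splits $\mathbb R^N$ into the near-singularity ball $|z-y|\le|y|/2$, the near-origin ball $|z|\le|y|/2$, and the remainder, and checks that each piece contributes $O(|y|^{-\sigma})$. Your treatment of each region is accurate: on $A_1$ you correctly get $|y|^{2m}\cdot|y|^{-(2m+\sigma)}$; on $A_2$ the condition $\sigma<N-2m$ (equivalently $2m+\sigma<N$) is exactly what makes $\int_{|z|\le|y|/2}(1+|z|)^{-(2m+\sigma)}\,dz\sim|y|^{N-2m-\sigma}$ rather than bounded or logarithmic, which you flag; and on $A_3$ the further split into $|y|/2<|z|\le 2|y|$ (both factors comparable to $|y|$, volume $\lesssim|y|^N$) and $|z|>2|y|$ (integrand $\sim|z|^{-N-\sigma}$, tail integral $\sim|y|^{-\sigma}$) closes the estimate. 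No gaps.
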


Set
$$\Omega_i:=\{y\in \mathbb{R}^N\hbox{ such that } |y-x^i|\leq |y-x^j|, \hbox{ for all }j\not =i\}.$$

\begin{lemma}\label{lem:3}
For any $\theta>k,$ there exists a constant $C$,  such that
\begin{equation}\label{e1}
 \sum_j\frac{1}{(1+\mu_j|y-x_j|)^{\theta}}\leq\frac{C}{(1+\mu_i|y-x_i|)^{\theta}},\quad y\in B_i:=B_1(x_i).
\end{equation}

\end{lemma}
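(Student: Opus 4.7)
\textbf{Proof plan for Lemma~\ref{lem:3}.} My plan is to split the sum $\sum_{j}$ into the diagonal contribution $j=i$, which already has exactly the shape of the right-hand side target, and the off-diagonal tail $j\neq i$, which I will show is bounded by $C L^{-\theta}$ times the diagonal term. The geometric content is simply that for $y\in B_1(x_i)$ the other bubble centers $x_j$ sit at distance roughly $L|\tilde P_i-\tilde P_j|$ away, so each off-diagonal summand is extremely small for $L$ large.

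More precisely, I would first use $|y-x_i|\leq 1$ together with the relation $x_j=P_j+o_L(1)=L\tilde P_j+o_L(1)$ coming from \eqref{2-6-1}, and the fact that the $\tilde P_j$ are distinct integer points (so $|\tilde P_i-\tilde P_j|\geq 1$ for $j\neq i$), to obtain, for $L$ large,
\begin{equation*}
|y-x_j|\;\geq\;|x_i-x_j|-1\;\geq\;\tfrac{1}{2}L|\tilde P_i-\tilde P_j|,\qquad j\neq i.
\end{equation*}
Using the comparability $\mu_j\asymp\mu_i$ from \eqref{2-1-9}, this upgrades to $1+\mu_j|y-x_j|\geq c\mu_i L|\tilde P_i-\tilde P_j|$, and therefore
\begin{equation*}
\sum_{j\neq i}\frac{1}{(1+\mu_j|y-x_j|)^{\theta}}\;\leq\;\frac{C}{(\mu_i L)^{\theta}}\sum_{j\neq i}\frac{1}{|\tilde P_i-\tilde P_j|^{\theta}}.
\end{equation*}

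Next I would observe that since $\{\tilde P_j\}\subset Q_k$ is a subset of the integer points of a $k$-dimensional subspace of $\mathbb{R}^N$, comparison with the full lattice sum $\sum_{n\in\mathbb{Z}^k\setminus\{0\}}|n|^{-\theta}$ shows that $\sum_{j\neq i}|\tilde P_i-\tilde P_j|^{-\theta}$ is finite, and bounded uniformly in $i$, precisely because $\theta>k$. To convert the resulting $(\mu_i L)^{-\theta}$ bound into the required shape, I would note that for $y\in B_1(x_i)$ one has $1+\mu_i|y-x_i|\leq 1+\mu_i\leq C\mu_i$, hence $(1+\mu_i|y-x_i|)^{-\theta}\geq c\mu_i^{-\theta}$, so
\begin{equation*}
\sum_{j\neq i}\frac{1}{(1+\mu_j|y-x_j|)^{\theta}}\;\leq\;\frac{C}{L^{\theta}}\cdot\frac{1}{(1+\mu_i|y-x_i|)^{\theta}}.
\end{equation*}
Adding the $j=i$ term gives the desired inequality with constant $1+CL^{-\theta}$, which is bounded (and in fact tends to $1$ as $L\to\infty$).

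The only real obstacle is ensuring the lattice sum $\sum_{j\neq i}|\tilde P_i-\tilde P_j|^{-\theta}$ is bounded uniformly in $i$; this is exactly what the hypothesis $\theta>k$ together with the integer-lattice structure of the $\tilde P_j$ provides, and it is the same mechanism that forces the dimension restriction $k<(N-2m)/2$ elsewhere in the paper. Everything else is triangle inequality and the comparability \eqref{2-1-9} of the concentration rates.
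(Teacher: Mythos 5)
Your proposal is correct and takes essentially the same route as the paper: split off the $j=i$ term, use the triangle inequality together with $|y-x_i|\le 1$ and $|x_i-x_j|\gtrsim L$ to bound each off-diagonal summand by $C(\mu_j|x_i-x_j|)^{-\theta}$, sum the lattice tail (finite since $\theta>k$), and then absorb $(\mu_i L)^{-\theta}$ into $(1+\mu_i|y-x_i|)^{-\theta}$ via $1+\mu_i|y-x_i|\le C\mu_i$. The paper's version is terser but identical in substance; your write-up merely makes explicit the comparability $\mu_j\asymp\mu_i$ and the role of $\theta>k$, which the paper leaves implicit.
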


\begin{proof}
 For $y\in B_1(x_i)$, it holds $|y-x_j|\ge \frac12 |x_i -x_j|$. As a result,
\[
  \sum_{j\ne i} \frac{1}{(1+\mu_j|y-x_j|)^{\theta}}\le \sum_{j\ne i} \frac{C}{(\mu_j|x_i-x_j|)^{\theta}}\le
  \frac{C}{(\mu_i L)^{\theta}}\le \frac{C}{(1+\mu_i|y-x_i|)^{\theta}}.
  \]
\end{proof}

 The following lemma is the main ingredient in the discussion of the existence
 and the local uniqueness of  bubbling solutions blowing-up
 at  $k$-dimensional lattice for $k\ge 1$.

\begin{lemma}\label{lem:4}
Suppose $N>2m+2, 1\leq k<\frac{N-2m}{2}$ and denote $\bar\mu=\min\{\mu_1,\cdots,\mu_n\}$. Then there exists $\tilde \theta>0$ small, such that
$$\begin{array}{ll}
\quad\displaystyle\int_{\mathbb{R}^{N}}\frac{1}{|y-z|^{N-2m}}W_{n}^{\frac{4m}{N-2m}}
(z)\sigma(z)\sum\limits_{j}\frac{\mu_j^{\frac{N-2m}2}}{(1+\mu_j|z-x_j|)
^{\frac{N-2m}{2}+\tau}}dz\\
\leq
\displaystyle C\sigma(y)\sum\limits_{j}\frac{\mu_j^{\frac{N-2m}2}}{(1+\mu_j|y-x_j|)^{\frac{N-2m}{2}+\tau+
\tilde \theta}}
+\frac{C}{\bar{\mu}^{\tilde\vartheta}}\sigma(y)\sum\limits_{j}\frac{\mu_j^{\frac{N-2m}2}}{(1+\mu_j|y-x_j|)^{\frac{N-2m}{2}+\tau}}.\end{array}$$
\end{lemma}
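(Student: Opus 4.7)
The plan is to split the integral into bubble regions and apply the basic convolution estimates in Lemmas~\ref{lem:1} and \ref{lem:2}. First I would decompose $\mathbb{R}^N = \bigcup_i \Omega_i$ into the Voronoi-type cells and use the fact that on $\Omega_i$ the dominant piece of $W_n$ is $U_i$: since $\mu_0 \leq \mu_j/\mu_i \leq \mu_0'$ and $|z-x_i| \leq |z-x_j|$ on $\Omega_i$, each $U_j(z)$ is dominated on $\Omega_i$ by a constant multiple of $\mu_i^{(N-2m)/2}(1+\mu_i|x_i - x_j|)^{-(N-2m)}(1+\mu_i|z-x_i|)^{-(N-2m)}$ rescaled, and the series $\sum_{j\neq i}(1+\mu_i|x_i-x_j|)^{-(N-2m)}$ converges uniformly in $n$ because $N-2m > 2k$. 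Thus $W_n(z)^{4m/(N-2m)}$ can be replaced on $\Omega_i$ by a constant times $U_i(z)^{4m/(N-2m)} \sim \mu_i^{2m}(1+\mu_i|z-x_i|)^{-4m}$, up to a remainder that is already smaller than the right-hand side, and the problem reduces to controlling the sum over $i$ of
\[
\int_{\Omega_i} \frac{\mu_i^{2m}}{|y-z|^{N-2m}(1+\mu_i|z-x_i|)^{4m}}\, \sigma(z) S(z)\,dz, \qquad S(z):=\sum_j \frac{\mu_j^{(N-2m)/2}}{(1+\mu_j|z-x_j|)^{(N-2m)/2+\tau}}.
\]

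Next I would separate $S(z)$ into the diagonal term $j=i$ and the off-diagonal terms $j\neq i$. For the diagonal term, after the substitution $z = x_i + w/\mu_i$ and an application of Lemma~\ref{lem:2} with exponent $\sigma_0 = (N-2m)/2 + \tau + \tilde\theta$ (which satisfies $\sigma_0 < N-2m$ provided $\tilde\theta < \vartheta$, the surplus $2m - \tau > 0$ producing the gain $\tilde\theta$), the result is a contribution of size
\[
C\,\frac{\mu_i^{(N-2m)/2}}{(1+\mu_i|y-x_i|)^{(N-2m)/2+\tau+\tilde\theta}},
\]
which, weighted by $\sigma(y)$, is precisely the first term in the claimed bound. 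For the off-diagonal terms $j \neq i$ I would apply Lemma~\ref{lem:1} to the product $(1+\mu_i|z-x_i|)^{-4m}(1+\mu_j|z-x_j|)^{-(N-2m)/2-\tau}$, extracting a factor $(1+\mu_i|x_i-x_j|)^{-\kappa}$ for a small $\kappa>0$ and reducing to single-center integrals to which Lemma~\ref{lem:2} again applies. Summing the resulting bounds over $j\neq i$ invokes $\tau > k$ via the lattice hypothesis \eqref{1-6-1}; the distance $|x_i-x_j| \geq L$ between centers turns the extra factor into $(\mu_i L)^{-\kappa}$, which can be absorbed into $\bar\mu^{-\tilde\vartheta}$ for a sufficiently small $\tilde\vartheta>0$, yielding the second summand of the bound.

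The weight $\sigma(z)$ must be tracked carefully throughout: I would split each $\Omega_i$ into the outer set $\{\sigma(z)=1\}$ and the inner core $\{(1+\mu_i|z-x_i|)^{\tau-1} < \mu_i^{\tau-1}\}$, on which the extra factor $(1+\mu_i|z-x_i|)^{\tau-1}/\mu_i^{\tau-1}$ strengthens the convergence of the Riesz convolution and ultimately reappears as $\sigma(y)$ on the outside after comparing $\sigma(z)$ and $\sigma(y)$ along the segment between $y$ and $z$. The main obstacle, and the reason for the two separate terms on the right-hand side, is this transfer of the interior weight $\sigma(z)$ into the exterior weight $\sigma(y)$: one must perform a case analysis according to whether $y$ is near or far from the bubble centers and absorb the mismatch either into the extra decay $\tilde\theta$ (when $y$ is in a bubble region) or into the factor $\bar\mu^{-\tilde\vartheta}$ (when $y$ is far). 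Once this point is handled the sum over $i$ reorganizes into the sum over $j$ on the right-hand side via the identification of the diagonal index.
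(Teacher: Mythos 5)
Your decomposition differs from the paper's in a way that creates a genuine gap. You tile $\mathbb{R}^N$ by the Voronoi cells $\Omega_i$ and on each one replace $W_n^{4m/(N-2m)}$ by $CU_i^{4m/(N-2m)}$, asserting that the remainder is already smaller than the right-hand side. But for a $k$-dimensional lattice with $k<N$ the cells $\Omega_i$ are unbounded in the $N-k$ directions transverse to the lattice, and there the bound $W_n(z)\lesssim U_i(z)$ fails: take $z$ at transverse distance $R\gg L$ from the lattice plane with nearest center $x_i$; the number of $x_j$ with $|z-x_j|\lesssim R$ is of order $(R/L)^k$, so $W_n(z)/U_i(z)\sim(R/L)^k\to\infty$. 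Your reduction to the two single-center integrals $j=i$ and $j\neq i$ therefore does not cover the part of $\Omega_i$ far from all centers, and that is exactly the region where the second term on the right-hand side is needed. Relatedly, you attribute the factor $\bar\mu^{-\tilde\vartheta}$ to the off-diagonal interaction $(\mu_i L)^{-\kappa}$ extracted from Lemma~\ref{lem:1}, but in the paper's argument this factor has a different origin: it comes from the observation that for $z\notin\cup_j B_1(x_j)$ one has $\mu_j|z-x_j|\ge\mu_j\ge\bar\mu$, so $U_j(z)$ carries an extra factor $(\mu_j|z-x_j|)^{-\vartheta}\le\bar\mu^{-\vartheta}$ relative to the comparison kernel, giving $W_n(z)=O(\bar\mu^{-\vartheta})\sum_j \mu_j^{(N-2m)/2}(1+\mu_j|z-x_j|)^{-(N-2m)/2-\tau}$ uniformly outside all unit balls.

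The paper avoids the problem you run into by splitting the domain as $\bigcup_i\bigl(\Omega_i\cap B_1(x_i)\bigr)$ together with the complementary set $\mathbb{R}^N\setminus\bigcup_j B_1(x_j)$. Inside each $\Omega_i\cap B_1(x_i)$, Lemma~\ref{lem:3} (which is stated only for $y\in B_1(x_i)$, precisely because of the issue above) collapses both $W_n$ and the sum $S(z)$ onto the $i$-th bubble, and the $\tilde\theta$-gain of the first term comes out of the Riesz convolution with two alternative bounds on $\sigma(z)$ that together reproduce $\sigma(y)$. On the exterior region, the $\bar\mu^{-\vartheta}$-smallness of $W_n$ is the entire content, after which one raises $\sum_j\mu_j^{(N-2m)/2}(1+\mu_j|z-x_j|)^{-(N-2m)/2-\tau}$ to the power $m^*-1$ via \eqref{10-22-1} and applies Lemma~\ref{lem:2} to get the second term. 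To repair your argument you would need either to add a separate estimate on $\mathbb{R}^N\setminus\bigcup_j B_1(x_j)$ in this spirit, or to prove a genuine pointwise control of $W_n$ on all of $\Omega_i$ (which, as noted, is not $CU_i$ there). Your treatment of the diagonal contribution and your observation that $\tau>k$ makes the lattice sums converge are correct and match the paper's mechanism for the first term.
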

\begin{proof}

By Lemma \ref{lem:3}, if $z\in B_1(x_i)$,  we have
\begin{equation}\label{2.42}
\begin{aligned}
W_{n}^{\frac{4m}{N-2m}}\sum\limits_{j}\frac{\mu_j^{\frac{N-2m}2}}{(1+\mu_j|z-x_j|)
^{\frac{N-2m}{2}+\tau}}\leq C\frac{\mu_i^{\frac{N-2m}2+2m}}{(1+\mu_i|z-x_i|)^{\frac{N-2m}{2}+4m+\tau}}.
\end{aligned}
\end{equation}

By Lemma \ref{lem:2}, we have
\begin{equation}\label{2.45}
\begin{aligned}
&\quad
\int_{\Omega_{i}\cap B_i}\frac{1}{|y-z|^{N-2m}}W_{n}^{\frac{4m}{N-2m}}(z)\sigma(z)
\sum\limits_{j}\frac{\mu_j^{\frac{N-2m}2}}{(1+\mu_j|z-x_j|)
^{\frac{N-2m}{2}+\tau}}
dz\\
&\leq C\int_{\Omega_{i}\cap B_i}\frac{1}{|y-z|^{N-2m}}
\frac{\mu_i^{\frac{N-2m}2+2m}}{(1+\mu_i|z-x_i|)^{\frac{N-2m}{2}+4m+\tau}}
(\frac{1+\mu_i|z-x_i|}{\mu_i})^{\tau-1}\\
& \leq \frac{C\mu_i^{\frac{N-2m}2+1-\tau}}{(1+\mu_i |y-x_i|)^{\min(\frac{N-2m}{2}+2m+1, N-2m)}}\\
& \leq (\frac{1+\mu_i |y-x_i|}{\mu_i})^{\tau-1}\frac{C\mu_i^{\frac{N-2m}2}}{(1+\mu_i|y-x_i|)^{\frac{N-2m}{2}+\tau
+\tilde \theta}}.
\end{aligned}
\end{equation}
 Similarly, we have
\begin{equation}\label{2.46}
\begin{aligned}
&\quad
\int_{\Omega_{i}\cap B_i}\frac{1}{|y-z|^{N-2m}}W_{n}^{\frac{4m}{N-2m}}(z)\sigma(z)
\sum\limits_{j}\frac{\mu_j^{\frac{N-2m}2}}{(1+\mu_j|z-x_j|)
^{\frac{N-2m}{2}+\tau}}
dz\\
&\leq C\int_{\Omega_{i}\cap B_i}\frac{1}{|y-z|^{N-2m}}
\frac{\mu_i^{\frac{N-2m}2+2m}}{(1+\mu_i|z-x_i|)^{\frac{N-2m}{2}+4m+\tau}}
\\
&\leq \frac{C\mu_i^{\frac{N-2m}2}}{(1+\mu_i |y-x_i|)^{\frac{N-2m}{2}+\tau+\tilde \theta}},
\end{aligned}
\end{equation}
which, together with \eqref{2.45}, gives
\begin{equation}\label{2.47}
\begin{aligned}
&\quad
\int_{\Omega_{i}\cap B_i}\frac{1}{|y-z|^{N-2m}}W_{n}^{\frac{4m}{N-2m}}(z)\sigma(z)
\sum\limits_{j}\frac{\mu_j^{\frac{N-2m}2}}{(1+\mu_j|z-x_j|)
^{\frac{N-2m}{2}+\tau}}
dz\\
&\leq \frac{C\sigma(y)}{(1+|y-x_i|)^{\frac{N-2m}{2}+\tau+\tilde \theta}}.
\end{aligned}
\end{equation}

 We have the following inequality:

\begin{equation}\label{10-22-1}
\begin{split}
& \Bigl(
\sum_{j}  \frac{\mu_{j}^{\frac{N-2m}2}}{(1+ \mu_{j}|y-x_{j}|)^{ \frac{N-2m}2+\tau
}}\Bigr)^{m^*-1}\\
\le & C\sum_{j} \frac{\mu_{j}^{\frac{N+2m}2}}{(1+ \mu_{j}|y-x_{j}|)^{ \frac{N+2m}2+\tau
}}\Bigl(\sum_{j} \frac{1}{(1+ \mu_{j}|y-x_{j}|)^{ \tau
}}
\Bigr)^{\frac{4m}{N-2m}}\\
\le & C\sum_{j} \frac{\mu_{j}^{\frac{N+2m}2}}{(1+ \mu_{j}|y-x_{j}|)^{ \frac{N+2m}2+\tau
}}.
\end{split}
\end{equation}

  If  $z\in \mathbb R^N\setminus \cup_j B_1(x_j)$, then

  \[
  W_n (z) =O\bigl(\frac1{\bar\mu^\vartheta}\bigr)\sum\limits_{j}\frac{\mu_j^{\frac{N-2m}2}}{(1+\mu_j|z-x_j|)
^{\frac{N-2m}{2}+\tau}}.
  \]
Thus, we have

\begin{equation}\label{2.42}
\begin{split}
&W_{n}^{\frac{4m}{N-2m}}\sum\limits_{j}\frac{\mu_j^{\frac{N-2m}2}}{(1+\mu_j|z-x_j|)
^{\frac{N-2m}{2}+\tau}}
\le   \frac C{\bar\mu^\vartheta} \Bigl( \sum\limits_{j}\frac{\mu_j^{\frac{N-2m}2}}{(1+\mu_j|z-x_j|)
^{\frac{N-2m}{2}+\tau}}  \Bigr)^{m^*-1}
\\
\leq &   \frac C{\bar\mu^\vartheta}\sum_{j} \frac{\mu_j^{\frac{N+2m}2}}{(1+ \mu_{j}|z-x_{j}|)^{ \frac{N+2m}2+\tau
}}, \quad \forall\; z\in \mathbb R^N\setminus \cup_j B_1(x_j).
\end{split}
\end{equation}
Then, the result follows from Lemma~\ref{lem:2}.
\end{proof}

Set
\begin{equation}\label{10-22-80}
W_{L,{\bf x,\mu}}=\sum_{i=1}^{\infty }U_{x_{i, L},\mu_{i,L}}.
\end{equation}
 Define
\begin{equation}\label{10-22-8}
N(\omega_L)=K(y)\Bigl(\bigl(  W_{L,{\bf x,\mu}} +\omega_L \bigr)_+^{m^*-1}-
 W_{L,{\bf x,\mu}}^{m^*-1}-(m^*-1) W_{L,{\bf x,\mu}}^{m^*-2}\omega_L\Bigr),
\end{equation}
and
\begin{equation}\label{11-22-8}
l_L=K(y)
 W_{L,{\bf x,\mu}}^{m^*}-\sum_{j=1}^\infty U_{x_{j, L}, \mu_{j, L}}^{m^*-1}.
\end{equation}

  We now  estimate
 $N(\omega_L)$ and $l_L$.

\begin{lemma}\label{l-1-5-3}
If $N> 2m+2$, then
\[
\|N(\omega_L)\|_{**}\le C  \|\omega_L\|_*^{\min(m^*-1, 2)}.
\]
\end{lemma}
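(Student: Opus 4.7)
The plan is to split into two cases according to the value of the exponent $m^*-1=\frac{N+2m}{N-2m}$, using the standard algebraic inequalities for $(1+t)_+^\gamma-1-\gamma t$ already recalled just before \eqref{1-15-1}. Writing $W:=W_{L,{\bf x,\mu}}$ and factoring $W^{m^*-1}$ pointwise, one gets
\begin{equation*}
|N(\omega_L)(y)|\le
\begin{cases}
C K(y)|\omega_L(y)|^{m^*-1}, & \text{if } m^*-1\le 2,\\
C K(y)\bigl(W(y)^{m^*-3}|\omega_L(y)|^2+|\omega_L(y)|^{m^*-1}\bigr), & \text{if } m^*-1>2.
\end{cases}
\end{equation*}
Since $K$ is bounded by $(A_1)$, it suffices to estimate the two pieces in the $\|\cdot\|_{**}$-norm.

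The term $|\omega_L|^{m^*-1}$ is controlled directly from the definition \eqref{100-16-1}: one raises the pointwise bound on $|\omega_L|$ to the power $m^*-1$, uses $\sigma(y)^{m^*-1}\le\sigma(y)$ (valid since $0\le\sigma\le 1$ and $m^*-1\ge 1$), and then applies the key inequality \eqref{10-22-1} to turn
$\bigl(\sum_j \mu_{j,L}^{(N-2m)/2}(1+\mu_{j,L}|y-x_{j,L}|)^{-\frac{N-2m}{2}-\tau}\bigr)^{m^*-1}$
into
$C\sum_j \mu_{j,L}^{(N+2m)/2}(1+\mu_{j,L}|y-x_{j,L}|)^{-\frac{N+2m}{2}-\tau}$,
yielding $\||\omega_L|^{m^*-1}\|_{**}\le C\|\omega_L\|_*^{m^*-1}$ immediately.

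The delicate piece is the mixed term $W^{m^*-3}|\omega_L|^2$ appearing only when $2m+2<N<6m$. Here one bounds $W\le C\sum_i \mu_{i,L}^{(N-2m)/2}(1+\mu_{i,L}|y-x_{i,L}|)^{-(N-2m)}$ and $|\omega_L|$ by the $\|\cdot\|_*$-norm, producing a product of two multi-index sums to the powers $m^*-3$ and $2$. The total power of $\mu_j$ in the numerator adds up correctly to $\frac{N+2m}{2}$, since $(m^*-1)\cdot\frac{N-2m}{2}=\frac{N+2m}{2}$. To redistribute the decay exponents so that each resulting summand matches the form appearing in $\|\cdot\|_{**}$, I would apply Lemma~\ref{lem:1} repeatedly to cross-terms with $i\ne j$, absorbing the off-diagonal factors $(1+|x_i-x_j|)^{-\sigma}$ into convergent lattice sums (which is exactly where assumption \eqref{1-6-1} and $\tau>k$ enter), and then invoke \eqref{10-22-1} once more on the diagonal contributions. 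This is the main technical obstacle, but it is essentially the same computation carried out for the $W^{m^*-1}$-type estimate of $l_L$ in Lemma~\ref{l-2-5-3} and for the linear term in Lemma~\ref{l1-8-1}, so it reduces to bookkeeping of powers. Combining the two pieces gives $\|N(\omega_L)\|_{**}\le C\|\omega_L\|_*^{\min(m^*-1,2)}$, as required.
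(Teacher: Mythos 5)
Your decomposition of $N(\omega_L)$ into the two cases $m^*-1\le 2$ and $m^*-1>2$, and your treatment of the pure term $|\omega_L|^{m^*-1}$ (raise the pointwise $\|\cdot\|_*$-bound, use $\sigma^{m^*-1}\le\sigma$, then apply \eqref{10-22-1}), coincide with the paper's argument exactly. Where you diverge is on the mixed term $W^{m^*-3}|\omega_L|^2$, and this is precisely the step you leave as ``bookkeeping''. You propose to keep the sharp decay $W\le C\sum_i \mu_{i,L}^{(N-2m)/2}(1+\mu_{i,L}|y-x_{i,L}|)^{-(N-2m)}$ and then reconcile the two inhomogeneous sums via repeated use of Lemma~\ref{lem:1} and the lattice summability from \eqref{1-6-1}. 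That route is plausible but substantially more laborious than what the paper does, and you do not actually carry it out.

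The paper avoids the cross-term analysis entirely by a simple observation: since $\tau=\frac{N-2m}{2}-\vartheta<\frac{N-2m}{2}$, one may \emph{discard} decay and bound
\[
W^{m^*-3}\le C\Bigl(\sum_j\frac{\mu_{j,L}^{\frac{N-2m}{2}}}{(1+\mu_{j,L}|z-x_{j,L}|)^{\frac{N-2m}{2}+\tau}}\Bigr)^{m^*-3},
\]
i.e.\ replace the exponent $N-2m$ by the weaker $\frac{N-2m}{2}+\tau$. Now $W^{m^*-3}$ and $|\omega_L|^2$ are powers of \emph{the same} base sum, so the product is $\|\omega_L\|_*^2\,\sigma^2\bigl(\sum_j\cdots\bigr)^{m^*-1}$, and a single application of \eqref{10-22-1} (plus $\sigma^2\le\sigma$) finishes the estimate — no invocation of Lemma~\ref{lem:1}, no off-diagonal bookkeeping. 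The power count you did, $(m^*-1)\cdot\frac{N-2m}{2}=\frac{N+2m}{2}$, confirms that the weakened decay is still strong enough. You identified the correct target inequality \eqref{10-22-1}; what you missed is that intentionally weakening $W$'s decay to match the $\|\cdot\|_*$-weight makes \eqref{10-22-1} directly applicable, which is the whole proof.
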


\begin{proof}

We have
\[
|N(\omega_L)|\le
\begin{cases}
C|\omega_L|^{m^*-1}, & N\ge 6m;\vspace{2mm}\\
CW_{L, {\bf x,\mu}}^{m^*-3}\omega_L^2+C|\omega_L|^{m^*-1},  &N<6m.
\end{cases}
\]
Since $\tau= \frac{N-2m}2-\vartheta$, $k<\frac{N-2m}2$ and $\vartheta>0$ is small, we find that if $N<6m$, then
\[
W_{L, {\bf x,\mu}}^{m^*-3}\le C\Bigl( \sum\limits_{j}\frac{\mu_{j,L}^{\frac{N-2m}2}}{(1+\mu_{j,L}|z-x_{j,L}|)
^{\frac{N-2m}{2}+\tau}}\Bigr)^{m^*-3}.
\]
As a result,

\[
|N(\omega_L)|\le C\sigma(z) \Bigl( \sum\limits_{j}\frac{\mu_{j,L}^{\frac{N-2m}2}}{(1+\mu_{j,L}|z-x_{j,L}|)
^{\frac{N-2m}{2}+\tau}}\Bigr)^{m^*-1} \|\omega_L\|_*^{\min(m^*-1, 2)}.
\]
The result follows \eqref{10-22-1}.

\end{proof}

\begin{lemma}\label{l-2-5-3}
If $N>
2m+2$, then
\[
\|l_L\|_{**}\le C \Bigl(\frac{1}{\mu_L^{\min(
\frac{N+2m}2-\tau, \beta-\tau+1)}}+\max_i |x_{i, L}-P_i|^\beta\Bigr),
\]
where $\mu_L=\min_j \mu_{j, L}$.
\end{lemma}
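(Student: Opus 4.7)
The plan is to estimate $l_L$ pointwise and then take the weighted supremum defining $\|\cdot\|_{**}$. To that end I split $\mathbb R^N$ into the near-zones $\Omega_j\cap B_1(x_{j,L})$ (with $\Omega_j$ the Voronoi cell defined before Lemma~\ref{lem:3}) and the far-zone $\mathbb R^N\setminus\bigcup_j B_1(x_{j,L})$. On each near-zone I decompose
\[
l_L = \bigl(K(y)-K(P_j)\bigr)W_{L,\mathbf{x},\mathbf{\mu}}^{m^*-1} + K(P_j)\Bigl(W_{L,\mathbf{x},\mathbf{\mu}}^{m^*-1} - \sum_{i} U_{x_{i,L},\mu_{i,L}}^{m^*-1}\Bigr) + \bigl(K(P_j)-1\bigr)\sum_i U_{x_{i,L},\mu_{i,L}}^{m^*-1},
\]
where the last term disappears under the implicit normalization $K(0)=1$ in $(A_3)$; the first two pieces are then handled in turn.

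For the first piece, $(A_3)$ together with the triangle inequality yields $|K(y)-K(P_j)| \le C(|x_{j,L}-P_j|^\beta + |y-x_{j,L}|^\beta)$ near $P_j$, and on each near-zone $W_{L,\mathbf{x},\mathbf{\mu}}^{m^*-1}\le C\,U_{x_{j,L},\mu_{j,L}}^{m^*-1}$ by Lemma~\ref{lem:3}. The summand $|x_{j,L}-P_j|^\beta U_j^{m^*-1}$ directly produces the $\max_i|x_{i,L}-P_i|^\beta$ contribution. For the other summand, setting $\rho=\mu_{j,L}|y-x_{j,L}|$, the ratio against the $\|\cdot\|_{**}$ weight equals, up to constants, $\mu_{j,L}^{-\beta}\cdot\rho^\beta(1+\rho)^{\tau-(N+2m)/2}/\sigma(y)$; since on this near-zone $\sigma(y)$ behaves like $((1+\rho)/\mu_{j,L})^{\tau-1}$, a case analysis on $\rho\le 1$ versus $\rho>1$ shows that the supremum is $O\bigl(\mu_L^{-\min((N+2m)/2-\tau,\,\beta-\tau+1)}\bigr)$, the two candidate exponents arising respectively from $\rho\sim\mu_{j,L}$ and from $\rho\sim 1$.

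For the second piece, I apply the elementary inequality
\[
\Bigl|W_{L,\mathbf{x},\mathbf{\mu}}^{m^*-1} - \sum_i U_{x_{i,L},\mu_{i,L}}^{m^*-1}\Bigr| \le C\,U_j^{m^*-2}\sum_{i\ne j} U_{x_{i,L},\mu_{i,L}} + C\Bigl(\sum_{i\ne j} U_{x_{i,L},\mu_{i,L}}\Bigr)^{m^*-1}
\]
on $\Omega_j\cap B_1(x_{j,L})$ (with an additional $U_j^{m^*-3}(\sum_{i\ne j}U_i)^2$ term when $N<6m$, treated identically). On this near-zone $|y-x_{i,L}|\ge c|P_i-P_j|$ for $i\ne j$, so $U_{x_{i,L},\mu_{i,L}}(y)\le C\mu_{i,L}^{-(N-2m)/2}|P_i-P_j|^{-(N-2m)}$. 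Pairing with $U_j^{m^*-2}$, measuring against the $\|\cdot\|_{**}$ weight, and summing via assumption \eqref{1-6-1} together with Lemma~\ref{lem:1} yields a contribution of order $\mu_L^{-((N+2m)/2-\tau)}$. The far-zone contribution is smaller and is bounded directly using Lemma~\ref{lem:1} and the pointwise inequality \eqref{10-22-1}.

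The main technical obstacle is extracting the sharp exponent $\min\bigl((N+2m)/2-\tau,\,\beta-\tau+1\bigr)$ in the first piece. Because the weight in $\|\cdot\|_{**}$ mixes $\sigma(y)$ with the pointwise decay of $U_{x_{j,L},\mu_{j,L}}^{m^*-1}$, one must split each near-zone into the inner region $\{\rho\le 1\}$, where $\sigma\sim\mu_{j,L}^{-(\tau-1)}$, and the outer region $\{1<\rho\le\mu_{j,L}\}$, where $\sigma\sim(\rho/\mu_{j,L})^{\tau-1}$, and verify that the worst case occurs at $\rho\sim 1$ or at $\rho\sim\mu_{j,L}$ according as $\beta\le(N+2m)/2-1$ or not. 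In addition, the interaction bound relies crucially on $\tau>k$ (built into \eqref{1-6-1}) so that the lattice sum $\sum_{i\ne j}|P_i-P_j|^{-\tau}$ converges uniformly in $j$, and on Lemma~\ref{lem:1} to reabsorb cross-products of two distant bubbles back into a single weighted term of the form appearing in $\|\cdot\|_{**}$.
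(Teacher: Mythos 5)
Your proof follows essentially the same route as the paper's. The paper writes $l_L = K(y)\bigl(W^{m^*-1}-\sum_j U_j^{m^*-1}\bigr)+\bigl(K(y)-1\bigr)\sum_j U_j^{m^*-1}=:J_1+J_2$, which is algebraically the same decomposition you use once one substitutes $K(P_j)=K(0)=1$; the elementary pointwise inequality you invoke for the cross-bubble piece is exactly how the paper splits $J_1$ into $J_{11}+J_{12}$ in \eqref{1-l2-5-3}, and the $|x_{j,L}-P_j|^\beta+|y-x_{j,L}|^\beta$ split plus the case analysis on the rescaled variable is exactly the paper's treatment of $J_2$ in \eqref{7-2-9}--\eqref{9-2-9}, including the same $\tau_1=1$ balance that produces $\min\bigl(\tfrac{N+2m}{2}-\tau,\ \beta-\tau+1\bigr)$. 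So no new idea is introduced and no step departs from the paper's.

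One caveat, which you should not gloss over because the paper itself is terse here: the claim that the summand $|x_{j,L}-P_j|^\beta U_j^{m^*-1}$ ``directly produces the $\max_i|x_{i,L}-P_i|^\beta$ contribution'' is not immediate. Testing at $y=x_{j,L}$, where $\sigma(y)\sim\mu_{j,L}^{1-\tau}$, the ratio against the $\|\cdot\|_{**}$ weight is of size $\mu_{j,L}^{\tau-1}|x_{j,L}-P_j|^\beta$, not $|x_{j,L}-P_j|^\beta$; the factor $(1+\mu_{j,L}|y-x_{j,L}|)^{(N+2m)/2-\tau}$ in the denominator of the analogue of \eqref{7-2-9} does not compensate for $\sigma(y)^{-1}$ near the bubble center. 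In the paper's applications this is harmless because one ultimately has $|x_{j,L}-P_j|=O(\mu_{j,L}^{-2})$, so $\mu_{j,L}^{\tau-1}|x_{j,L}-P_j|^\beta$ is still of lower order than $\mu_L^{-\min((N+2m)/2-\tau,\,\beta-\tau+1)}$; but if you want the lemma to be self-contained as stated, you should either carry the extra factor $\mu_L^{\tau-1}$ in the second term or record the standing smallness hypothesis on $|x_{j,L}-P_j|$ under which it is absorbed.
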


\begin{proof}

We have
\[
l_L= K(y)\Bigl(
W_{L, {\bf x, \mu}}^{m^*-1}-\sum_{j} U_{x_{j,L}, \mu_{j,L}}^{m^*-1}\Bigr)
+\bigl(
K(y)-1\bigr)\sum_{j} U_{x_{j,L}, \mu_{j,L}}^{m^*-1}
:= J_1+J_2.
\]

 Assume $y\in\Omega_i$. Then,
\begin{equation}\label{1-l2-5-3}
\begin{split}
|J_1|\le & C\frac{\mu_{i,L}^{2m}}{(1+\mu_{i,L}|y-x_{i,L}|)^{4m}}\sum_{j\ne i} \frac{\mu_{j,L}^{\frac{N-2m}2}}{(1+\mu_{j,L}|y-x_{j,L}|)^{N-2m}}\\
&+
C\Bigl(
\sum_{j\ne i} \frac{\mu_{j,L}^{\frac{N-2m}2}}{(1+\mu_{j,L}|y-x_{j,L}|)^{N-2m}}\Bigr)^{m^*-1}:= J_{11}+J_{12}.
\end{split}
\end{equation}
Since $
|y-x_{j, L}|\ge |y-x_{i, L}|
$  for $y\in\Omega_i$,
we find
\begin{equation}\label{2-l2-5-3}
|J_{11}|
\le \frac{C\mu_{i, L}^{\frac{N+2m}2}}{(1+\mu_{i,L}|y-x_{i,L}|
)^{\frac{N+2m}2+\tau} }\sum_{j\ne i}\frac1{(\mu_{i, L}|x_{j,L}-x_{i,L}|)^{
\frac{N+2m}2-\tau}},
\end{equation}
and
\begin{equation}\label{nn2-l2-5-3}
|J_{11}|
\le \frac{C\mu_{i, L}^{\frac{N+2m}2}}{(1+\mu_{i,L}|y-x_{i,L}|
)^{\frac{N+2m}2+\tau} }\bigl( \frac{  1+\mu_{i,L}|y-x_{i,L}| }{ \mu_i }  \bigr)^{\tau-1}\sum_{j\ne i}\frac
{\mu_{j,L}^{\tau-1}}{(\mu_{i, L}|x_{j,L}-x_{i,L}|)^{
\frac{N+2m}2-1}}.
\end{equation}

Hence, we obtain
\begin{equation}\label{100-1-2-9}
\|J_{11}\|_{**}\le  \frac{C}{(\mu_{ L}L)^{
\frac{N+2m}2-\tau}}.
\end{equation}
Similarly, we can also prove
\begin{equation}\label{101-1-2-9}
\|J_{12}\|_{**}\le  \frac{C}{(\mu_{ L}L)^{
\frac{N+2m}2-\tau}}.
\end{equation}
Thus, \eqref{100-1-2-9} and \eqref{101-1-2-9} yield
\begin{equation}\label{1-2-9}
\|J_1\|_{**}\le \frac{C}{(\mu_{ L}L)^{
\frac{N+2m}{2}-\tau}}.
\end{equation}

 Now, we estimate  $J_2$.  Similar to the proof of \eqref{1-2-9}, we have
\begin{equation}\label{6-2-9}
\sum_{j\ne i}U_{x_{j,L}, \mu_{j,L}}^{m^*-1}\le \frac{C\sigma(y) \mu_{i,L}^{\frac{N+2m}2}}{(1+\mu_{i,L}|y-x_{i,L}|)^{\frac{N+2m}2 + \tau }}\frac1{
(\mu_L  L)^{ \frac{N+2m}2 - \tau }},\quad y\in \Omega_i.
\end{equation}

  We also have
\begin{equation}\label{8-2-9}
|K(y)-1| U_{x_{i,L}, \mu_{i,L}}^{m^*-1}\le  \frac{C\mu_{i,L}^{\frac{N+2m}2}}{(1+\mu_{i,L}|y-x_{i,L}|)^{\frac{N+2m}2 + \tau }}\frac{1}{
\mu_{i,L}^{ \frac{N+2m}2 - \tau }},\quad |y-x_{i, L}|\ge 1.
\end{equation}
If $ |y-x_{i, L}|\le 1$,
\begin{equation}\label{7-2-9}
\begin{split}
&|K(y)-1| U_{x_{i,L}, \mu_{i,L}}^{m^*-1}\le C |y-P_i|^\beta U_{x_{i,L}, \mu_{i,L}}^{m^*-1}\\
\le & \frac{C\sigma(y)\mu_{i,L}^{\frac{N+2m}2}}{(1+\mu_{i,L}|y-x_{i,L}|)^{\frac{N+2m}2 + \tau }}
(\sigma(y))^{-1}\Bigl( \frac{
|x_{i, L}-P_i|^\beta+|y-x_{i, L}|^{\beta}}{
(1+\mu_{i,L}|y-x_{i,L}|)^{\frac{N+2m}2 - \tau }} \Bigr).
\end{split}
\end{equation}

 Noting  that for any $\tau_1\in [1, \frac{N-2m}2)$, we have
\[
\frac{|y-x_{i, L}|^{\beta}}{
(1+\mu_{i,L}|y-x_{i,L}|)^{\frac{N+2m}2 - \tau_1 }}\le
\begin{cases} \frac{C}{\mu_{i,L}^\beta},& \text{if}\; \beta   -\frac{N+2m}2+\tau_1\le 0;
\\
\frac{C}{\mu_{i,L}^{\frac{N+2m}2-\tau_1}},& \text{if}\; \beta   -\frac{N+2m}2+\tau_1>0.
\end{cases}
\]
Hence,  we have proved
\begin{equation}\label{9-2-9}
\|J_2\|_{**}\le \frac{C}{\mu_{ L}^{\min(
\frac{N+2m}2-\tau, \beta-\tau+1)}}+C\max_i |x_{i, L}-P_i|^\beta.
\end{equation}

\end{proof}

\section{Asymptotic energy expansion}

\begin{lemma}\label{lemma:2.1}.  There are constants $C_1\ne 0$ and $C_2>0$, such that
\begin{eqnarray}
 \int_{\mathbb{R}^{N}}K(y)U_i^{m^*-1}\frac{\partial U_i}{\partial x_{i,j}}&=&
\frac{a_j  C_1}{\mu_i^{\beta-2}} (P_{i,j}- x_{i, j}  )+O\bigl( \frac1{\mu_i^{\beta-1+\theta}}+  |x_i-P_i   |^{\beta-1}  \bigr),\label{lemma:2.1e1}\\
\int_{\mathbb{R}^{N}}K(y)U_i^{m^*-1}\frac{\partial U_i}{\partial \mu_i}&=&
-\frac{C_2 \sum_{j=1}^N a_j}{\mu_i^{\beta+1}}+ O\bigl( \frac1{\mu_i^{\beta+1+\theta}}
+
\frac{ |x_i- P_i|^{\beta-2} }{\mu_i^{3}} \bigr).\label{lemma:2.1e2}
\end{eqnarray}
\end{lemma}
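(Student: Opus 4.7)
\textbf{Proof proposal for Lemma~\ref{lemma:2.1}.} The plan is to use integration by parts to rewrite both integrals in terms of $\int K(y)U_i^{m^*}$ and its derivatives, then do a change of variables $y = x_i + z/\mu_i$ and exploit the symmetry of $U_{0,1}$ together with the local expansion of $K$ near $P_i$ provided by $(A_2)$--$(A_3)$.

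For the first integral, I would observe that $\partial U_i/\partial x_{i,j} = -\partial U_i/\partial y_j$ and integrate by parts to get
\[
\int_{\mathbb R^N} K(y) U_i^{m^*-1}\frac{\partial U_i}{\partial x_{i,j}}\,dy
= \frac{1}{m^*}\int_{\mathbb R^N} \frac{\partial K(y)}{\partial y_j}\, U_i^{m^*}(y)\,dy.
\]
After substituting $y = x_i + z/\mu_i$, the measure $U_i^{m^*}\,dy$ becomes $U_{0,1}^{m^*}(z)\,dz$, and periodicity of $K$ lets me replace $K$ by its local expansion at the origin with $y_j - P_{i,j}$ playing the role of $y_j$ there. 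Writing $\eta_j = x_{i,j}-P_{i,j}$, condition $(A_3)$ gives
\[
\frac{\partial K}{\partial y_j}(x_i + z/\mu_i) = a_j\beta\,|z_j/\mu_i + \eta_j|^{\beta-2}(z_j/\mu_i+\eta_j) + O\bigl((|z|/\mu_i + |\eta|)^{\beta-1+\theta}\bigr).
\]
The zeroth-order term in $\eta_j$ (i.e.\ $|z_j/\mu_i|^{\beta-2}z_j/\mu_i$) integrates to zero against $U_{0,1}^{m^*}(z)$ by the reflection $z_j\mapsto -z_j$, so the leading contribution comes from the first-order Taylor term, which gives exactly
\[
\frac{a_j\beta(\beta-1)\eta_j}{\mu_i^{\beta-2}}\int_{\mathbb R^N}|z_j|^{\beta-2}U_{0,1}^{m^*}(z)\,dz,
\]
and setting $C_1 = -\beta(\beta-1)/m^*\cdot\int|z_j|^{\beta-2}U_{0,1}^{m^*}$ (independent of $j$ by radiality of $U_{0,1}$) yields \eqref{lemma:2.1e1}. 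The remainder $R$ in $(A_3)$ produces the $O(\mu_i^{-\beta+1-\theta})$ error after the change of variables, and the quadratic-in-$\eta$ terms contribute $O(|\eta|^{\beta-1})$.

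For the second identity I use $U_i^{m^*-1}\partial_{\mu_i}U_i = (m^*)^{-1}\partial_{\mu_i}U_i^{m^*}$ and compute
\[
\int_{\mathbb R^N} K(y) U_i^{m^*-1}\frac{\partial U_i}{\partial \mu_i}\,dy
= \frac{1}{m^*}\frac{\partial}{\partial \mu_i}\int_{\mathbb R^N}K(y)U_i^{m^*}(y)\,dy.
\]
The same change of variables and expansion of $K$ give
\[
\int K(y)U_i^{m^*}\,dy = K(0)\|U_{0,1}\|_{m^*}^{m^*} + \sum_{j=1}^N a_j \int_{\mathbb R^N}|z_j/\mu_i + \eta_j|^{\beta} U_{0,1}^{m^*}(z)\,dz + O(\mu_i^{-\beta-\theta}).
\]
The odd-in-$z_j$ part of the Taylor expansion of $|z_j/\mu_i+\eta_j|^\beta$ again vanishes on integration, while the even-order corrections in $\eta$ combine with the leading $\mu_i^{-\beta}\int|z_j|^\beta U_{0,1}^{m^*}$ term. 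Differentiating in $\mu_i$ brings down a factor $-\beta/\mu_i$ from the leading term and yields $C_2 = \beta/m^*\cdot\int|z_j|^\beta U_{0,1}^{m^*}\,dz>0$ with $\sum_j a_j$ out front. The error produced by differentiating the $O(|\eta|^2/\mu_i^{\beta-2})$ quadratic correction is $O(|\eta|^{\beta-2}/\mu_i^3)$ once one tracks the non-smooth $|t|^{\beta}$ expansion carefully.

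The main obstacle I expect is the last point: the map $t\mapsto |t|^\beta$ is not $C^3$ at the origin when $\beta<3$, so the usual Taylor-with-integral-remainder argument must be replaced by a more careful estimate, splitting the integral according to whether $|z_j|/\mu_i$ is large or small compared with $|\eta_j|$ and using that $|z|^{\beta-1}$-type singularities are locally integrable against $U_{0,1}^{m^*}$. The shape of the error terms $O(\mu_i^{-\beta+1-\theta} + |x_i-P_i|^{\beta-1})$ and $O(\mu_i^{-\beta-1-\theta} + \mu_i^{-3}|x_i-P_i|^{\beta-2})$ in the statement is exactly dictated by this split and by the regularity of $R$ in $(A_3)$; everything else is a change-of-variables calculation.
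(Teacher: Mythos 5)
Your proposal is correct and follows essentially the same route as the paper: integration by parts to pass to $\frac{1}{m^*}\int \partial_{y_j}K\, U_i^{m^*}$ (resp.\ $\frac{1}{m^*}\partial_{\mu_i}\int K U_i^{m^*}$), rescaling to $U_{0,1}$, inserting the expansion of $K$ from $(A_3)$, and using the $z_j\mapsto -z_j$ symmetry to kill the odd part so that the first nontrivial Taylor term in $\eta_j = x_{i,j}-P_{i,j}$ survives, with the constants $C_1, C_2$ identified exactly as you do. The only cosmetic difference is that the paper first subtracts the scale-invariant piece via $\int U_i^{m^*-1}\partial_{\mu_i}U_i = 0$ before inserting the expansion of $K$, whereas you differentiate $\int K U_i^{m^*}$ directly and let the $K(0)$ term drop out by scale invariance; these are the same computation.
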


\begin{proof} We first prove \eqref{lemma:2.1e1}. We have
\begin{eqnarray*}
\int_{\mathbb{R}^{N}}K(y)U_i^{m^*-1}\frac{\partial U_i}{\partial x_{i,j}}&=&
\frac1{m^*}\int_{\mathbb{R}^{N}} \frac{\partial K(y)}{\partial y_j}U_i^{m^*}\\
&=&
\frac{\beta}{m^*}
\int_{\mathbb{R}^{N}}  a_j | \mu_i^{-1} y_j + P_{i,j}- x_{i, j}   |^{\beta-2}
 ( \mu_i^{-1} y_j + P_{i,j}- x_{i, j} )  U_{0,1}^{m^*}
 \\
 &&+O\bigl( \frac1{\mu_i^{\beta-1+\theta}}+  |x_i- P_i   |^{\beta-1+\theta}  \bigr)
\\
&=& \frac{a_j  C_1}{\mu_i^{\beta-2}} (P_{i,j}- x_{i, j}  )+O\bigl( \frac1{\mu_i^{\beta-1+\theta}}+  |x_i-P_i   |^{\beta-1}  \bigr).
\end{eqnarray*}

\vskip8pt

 Now we prove \eqref{lemma:2.1e2}.  Using
\[
\int_{\mathbb{R}^{N}}U_i^{m^*-1}\frac{\partial U_i}{\partial \mu_i}=0,
\]
we find
\begin{eqnarray*}
 \int_{\mathbb{R}^{N}}K(y)U_i^{m^*-1}\frac{\partial U_i}{\partial \mu_i}&=&
\int_{\mathbb{R}^{N}}\sum_{j=1}^N a_j |y_j - P_{i,j}|^\beta U_i^{m^*-1}\frac{\partial U_i}{\partial \mu_i}
+O\bigl( \frac1{\mu_i^{\beta+1+\theta}}  \bigr)
\\
&=&\displaystyle \frac{1}{m^*}
\int_{\mathbb{R}^{N}}\frac{\partial}{\partial\mu_i}\sum_{j=1}^N a_j |\mu_i^{-1} y_j+ x_{i,j} - P_{i,j}
|^\beta U_{0,1}^{m^*}
\,dy+O\bigl( \frac1{\mu_i^{\beta+1+\theta}}  \bigr)\\
&=&-\frac{C_2 \sum_{j=1}^N a_j}{\mu_i^{\beta+1}}+ O\bigl( \frac1{\mu_i^{\beta+1+\theta}} +
\frac{ |x_i- P_i|^{\beta-2} }{\mu_i^{3}} \bigr).
\end{eqnarray*}

 We complete the proof of Lemma \ref{lemma:2.1}.
\end{proof}

\begin{lemma}\label{lemma:2.4} Set $W(x)=\sum\limits_{i}U_i  $. Here the sum can be from
 $1$ to $n$, or from 1 to infinity.  Then for $j=1, \cdots, N$,
\begin{eqnarray*}
&&\int_{\mathbb{R}^{N}}(-\Delta )^m W  Z_{i,j}dy
-\int_{\mathbb{R}^{N}}K(y)W^{m^*-1}Z_{i,j}dy\\
&=&\frac{a_j  C_1}{\mu_i^{\beta-2}} (P_{i,j}- x_{i, j}  )+O\bigl( \frac1{\mu_i^{\beta-1+\theta}}+  |x_i-P_i   |^{\beta-1+\theta} +\frac{1}{\mu_L^{N-2m}L^{N-2m+1}}\bigr).
\end{eqnarray*}

\end{lemma}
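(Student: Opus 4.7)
The plan is to use the bubble equation $(-\Delta)^m U_l=U_l^{m^*-1}$, which gives $\int(-\Delta)^mW\,Z_{i,j}=\sum_l\int U_l^{m^*-1}Z_{i,j}$, and then to decompose the difference as
\[
\int(-\Delta)^mW\,Z_{i,j}-\int K(y)W^{m^*-1}Z_{i,j}=(I)+(II)+(III),
\]
where $(I)=\int(1-K(y))U_i^{m^*-1}Z_{i,j}$, $(II)=\sum_{l\ne i}\int U_l^{m^*-1}Z_{i,j}$, and $(III)=-\int K(y)\bigl(W^{m^*-1}-U_i^{m^*-1}\bigr)Z_{i,j}$. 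The first piece will produce the main contribution via Lemma~\ref{lemma:2.1}, while the other two will be shown to be at most of the size of the claimed error.

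For $(I)$, I would first replace $Z_{i,j}$ by $\partial U_i/\partial x_{i,j}$; the cutoff error is supported where $|y-x_i|\ge 1$ and is of harmless size $O(\mu_i^{-N})$. Since $\int U_i^{m^*-1}\partial U_i/\partial x_{i,j}=0$ by translation invariance of $\int U_i^{m^*}$, one has $(I)=-\int K(y)U_i^{m^*-1}\partial U_i/\partial x_{i,j}+O(\mu_i^{-N})$, and applying~\eqref{lemma:2.1e1} directly yields the main term of the form $\frac{a_jC_1}{\mu_i^{\beta-2}}(P_{i,j}-x_{i,j})$ (up to a relabeling of the constant $C_1$) together with the admissible errors $O\bigl(\mu_i^{-\beta+1-\theta}+|x_i-P_i|^{\beta-1}\bigr)$.

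For $(II)$, on the support $B_2(x_i)$ of $Z_{i,j}$ and for $l\ne i$ one has $|y-x_l|\simeq L|\tilde P_l-\tilde P_i|$, so $U_l^{m^*-1}$ is smooth and bounded there by $C\mu_L^{-(N+2m)/2}L^{-(N+2m)}|\tilde P_l-\tilde P_i|^{-(N+2m)}$. The key observation is $\int Z_{i,j}=0$, from even symmetry of $\xi(|y-x_i|)$ and oddness of $\partial U_i/\partial x_{i,j}=-\partial U_i/\partial y_j$ in the $j$th coordinate. Taylor expanding $U_l^{m^*-1}$ around $x_i$ kills the constant term; the first-order term, summed over $l$ using~\eqref{1-6-1}, gives $(II)=O\bigl(\mu_L^{-N}L^{-(N+2m+1)}\bigr)$, which is much smaller than the claimed error.

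Term $(III)$ is the delicate one and produces exactly the $\mu_L^{-(N-2m)}L^{-(N-2m+1)}$ piece of the error. Writing $R(y):=\sum_{l\ne i}U_l(y)$, the pointwise expansion $W^{m^*-1}-U_i^{m^*-1}=(m^*-1)U_i^{m^*-2}R+\text{l.o.t.}$ holds on $B_2(x_i)$, where the lower-order terms (of order $U_i^{m^*-3}R^2$ when $m^*\ge 3$, or a H\"older analogue otherwise) are straightforwardly smaller than the claim. Since $R$ is smooth on $B_2(x_i)$ with $R(x_i)=O(\mu_L^{-(N-2m)/2}L^{-(N-2m)})$ and $|\nabla R(x_i)|=O(\mu_L^{-(N-2m)/2}L^{-(N-2m+1)})$, I would Taylor-expand $R$ at $x_i$ inside the leading integral $(m^*-1)\int K(y)U_i^{m^*-2}R\,Z_{i,j}$: the constant term contributes $R(x_i)\int K(y)U_i^{m^*-2}Z_{i,j}=O(\mu_L^{-N}L^{-(N-2m)})$, because $\int U_i^{m^*-2}Z_{i,j}=O(\mu_L^{-(N+2m)/2})$ after an integration by parts (only the $\nabla\xi$ boundary contribution survives), while the gradient term $\partial_{y_j}R(x_i)\cdot\int(y_j-x_{i,j})K(y)U_i^{m^*-2}Z_{i,j}\,dy$ is precisely $O(\mu_L^{-(N-2m)}L^{-(N-2m+1)})$, using $\int(y_j-x_{i,j})U_i^{m^*-2}Z_{i,j}\,dy=O(\mu_L^{-(N-2m)/2})$ (again by integration by parts). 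The main obstacle is exactly this last step: a naive pointwise bound on $(III)$ only gives the weaker factor $L^{-(N-2m)}$, and one must combine the parity cancellation $\int Z_{i,j}=0$ with the slow variation of $R$ around $x_i$ to extract the additional power $L^{-1}$; once this Taylor structure is in place, every remaining contribution (higher-order Taylor of $R$, variation of $K$, and the l.o.t.\ from the expansion of $W^{m^*-1}$) is subdominant and the lemma follows.
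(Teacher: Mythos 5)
Your proposal is correct and follows essentially the same route as the paper's (very terse) proof: both use $(-\Delta)^m W=\sum_l U_l^{m^*-1}$, isolate the $U_i^{m^*-1}$ piece so that Lemma~\ref{lemma:2.1} produces the main term, and control the cross terms by Taylor-expanding the far bubbles about $x_i$ together with the parity cancellations of $Z_{i,j}$, exactly reproducing the paper's two displayed error estimates. One small sharpening: $\int Z_{i,j}$ and $\int U_i^{m^*-2}Z_{i,j}$ vanish identically (not merely up to $O(\mu^{-(N+2m)/2})$), since $\xi$ is radial about $x_i$, so the $\nabla\xi$ boundary term in your integration by parts is itself odd in $y_j-x_{i,j}$.
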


\begin{proof}

We have
\begin{equation}\label{3.48}
\int_{\mathbb{R}^{N}}(-\Delta )^m W  Z_{i,j}dy=\int_{\mathbb{R}^{N}}\sum\limits_{l}U_l^{m^*-1} Z_{i,j}dy
=O\bigl(\frac{1}{\mu^{N}L^{N+2m+1}}\bigr).
\end{equation}

On the other hand,
\begin{equation}\label{3.47}
\begin{split}
&\quad\quad\int_{\mathbb{R}^{N}}K(y)W^{m^*-1}Z_{i,j}dy-\int_{\mathbb{R}^{N}}K(y)U_i^{m^*-1}Z_{i,j}dy
\\
&=(m^*-1) \int_{\mathbb{R}^{N}}K(y)U_i^{m^*-2}Z_{i,j}\sum_{l\ne i} U_l +O\Bigl(\int_{\mathbb{R}^{N}}|Z_{i,j}|
\bigl(\sum_{l\ne i} U_l\bigr)^{m^*-1}\Bigr)\\
&= O\bigl(\frac{1}{\mu^{N-2m}L^{N-2m+1}}\bigr).
\end{split}
\end{equation}

 Combining \eqref{3.48}, \eqref{3.47} and Lemma~\ref{lemma:2.1},
 we obtain the desired result in Lemma \ref{lemma:2.4}.
\end{proof}

\begin{lemma}\label{lemma:2.5} There exists some constant $C>0$ independent of $i, j, n,$ such that
\begin{eqnarray*}
&&\int_{\mathbb{R}^{N}} (-\Delta )^m W  Z_{i,N+1}dy
-\int_{\mathbb{R}^{N}}K(y)W^{m^*-1}Z_{i,N+1}dy\\
&=&\frac{C_2 \sum_{j=1}^N a_j}{\mu_i^{\beta+1}}+\sum_{l\ne i }\frac{C_4}{\mu_i(\mu_i\mu_l)^{\frac{N-2m}{2}} |x_{ i}-x_{ l}|^{N-2m}}\\
&&+ O\bigl( \frac1{\mu_i^{\beta+1+\theta}}  +  |x_i- P_i   |^{\beta-1+\theta}  +\frac{1}{\mu_L^{N-2m+1+\theta}L^{N-2m+\theta}}\bigr).
\end{eqnarray*}

\end{lemma}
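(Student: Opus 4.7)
\textbf{Proof plan for Lemma~\ref{lemma:2.5}.}
The plan is to mirror the proof of Lemma~\ref{lemma:2.4}, splitting the expression into a self-interaction piece and a cross-bubble piece; the essential new feature is that the scale derivative $\partial_{\mu_i}U_i$ does \emph{not} decay fast at the other centers, so a genuine interaction term survives and produces the sum $\sum_{l\ne i}C_4/(\mu_i(\mu_i\mu_l)^{(N-2m)/2}|x_i-x_l|^{N-2m})$. First I would write
\[
\int_{\R^N}K(y)W^{m^*-1}Z_{i,N+1} = \int_{\R^N}K(y)U_i^{m^*-1}Z_{i,N+1}+\int_{\R^N}K(y)\bigl(W^{m^*-1}-U_i^{m^*-1}\bigr)Z_{i,N+1}.
\]
The self term gives $-\frac{C_2\sum_j a_j}{\mu_i^{\beta+1}}$ directly from \eqref{lemma:2.1e2}: the cutoff $\xi$ contributes only $O(\mu_i^{-N})$ since both $U_i^{m^*-1}$ and $\partial_{\mu_i}U_i$ decay polynomially away from $x_i$. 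Negating to match the left-hand side of the identity we are proving produces the $+\frac{C_2\sum_j a_j}{\mu_i^{\beta+1}}$ claimed.

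For the cross term, I would linearize $W^{m^*-1}-U_i^{m^*-1}=(m^*-1)U_i^{m^*-2}\sum_{l\ne i}U_l+\mathcal R$, where $\mathcal R = O\bigl(U_i^{m^*-3}(\sum_{l\ne i}U_l)^{2}+(\sum_{l\ne i}U_l)^{m^*-1}\bigr)$, and replace $K(y)$ by $K(0)$ up to an $O(|y-P_i|^\beta)$ remainder. The leading cross contribution becomes
\[
-(m^*-1)K(0)\sum_{l\ne i}\int_{\R^N}U_i^{m^*-2}\,\partial_{\mu_i}U_i\cdot U_l\,dy,
\]
after absorbing the cutoff error. Differentiating the Emden--Fowler equation $(-\Delta)^m U_i=U_i^{m^*-1}$ in $\mu_i$ gives the crucial identity $(-\Delta)^m\partial_{\mu_i}U_i=(m^*-1)U_i^{m^*-2}\partial_{\mu_i}U_i$, so integration by parts rewrites each summand as $\int \partial_{\mu_i}U_i\cdot U_l^{m^*-1}$. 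Rescaling $y=x_l+z/\mu_l$ and using $\mu_i|x_l-x_i|\to\infty$, I would expand
\[
\partial_{\mu_i}U_i(x_l)=-\frac{(N-2m)\tilde C_m/2}{\mu_i^{(N-2m)/2+1}|x_l-x_i|^{N-2m}}+O\bigl(\mu_i^{-(N-2m)/2-3}|x_l-x_i|^{-(N-2m+2)}\bigr),
\]
which, combined with $\int U_l^{m^*-1}=\tilde C_m^{m^*-1}\mu_l^{-(N-2m)/2}\int(1+|z|^2)^{-(N+2m)/2}dz$, yields precisely the claimed term with
\[
C_4 = K(0)\,\frac{(N-2m)\tilde C_m}{2}\int_{\R^N}\frac{\tilde C_m^{m^*-1}}{(1+|z|^2)^{(N+2m)/2}}\,dz>0.
\]

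For the first integral I would use $(-\Delta)^m W=\sum_l U_l^{m^*-1}$; the term with $l=i$ collapses to $\int U_i^{m^*-1}\partial_{\mu_i}U_i=0$ modulo cutoff error, while for $l\ne i$ the support of $Z_{i,N+1}\subset B_2(x_i)$ together with $U_l^{m^*-1}\lesssim \mu_l^{-(N+2m)/2}|y-x_l|^{-(N+2m)}$ for $y\in B_2(x_i)$ gives a remainder of the size $\mu_L^{-(N-2m+1+\theta)}L^{-(N-2m+\theta)}$ after summing over $l$ via \eqref{1-6-1}. The $\mathcal R$ piece from the linearization and the $R(x)$ part of $K$ contribute terms bounded by $\mu_i^{-\beta-1-\theta}$ and $|x_i-P_i|^{\beta-1+\theta}$ by routine interpolation. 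The main obstacles are two: (i) justifying the integration-by-parts identity in the cross term when $Z_{i,N+1}$ carries the cutoff $\xi$, which requires showing that $(-\Delta)^m(\xi\,\partial_{\mu_i}U_i)-(m^*-1)U_i^{m^*-2}\partial_{\mu_i}U_i$ is supported in the annulus $1\le|y-x_i|\le 2$ and has small effect after pairing with $U_l$; and (ii) obtaining the sharp $\theta$-gain in the $L^{N-2m+\theta}$ denominator, which crucially uses condition \eqref{1-6-1} on the lattice sums so that the uniform error in $i$ does not blow up when we later sum this estimate over $j$ in the proof of Theorem~\ref{th11}.
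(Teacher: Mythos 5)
Your plan follows the paper's proof line for line: decompose $\int K W^{m^*-1}Z_{i,N+1}$ into the self term $\int K U_i^{m^*-1}Z_{i,N+1}$ (handled by Lemma~\ref{lemma:2.1}) plus the linearized cross term $(m^*-1)\int K U_i^{m^*-2}Z_{i,N+1}\sum_{l\ne i}U_l$, and recognize the latter as the source of the interaction sum. The integration-by-parts identity $(-\Delta)^m\partial_{\mu_i}U_i=(m^*-1)U_i^{m^*-2}\partial_{\mu_i}U_i$, used to rewrite the cross term as $\sum_{l\ne i}\int\partial_{\mu_i}U_i\,U_l^{m^*-1}$ and then read off $C_4$, is a clean and correct way to make explicit what the paper leaves as ``together with Lemma~\ref{lemma:2.1}.''

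One attribution is off, though it does not change the conclusion. You assign the remainder $\mu_L^{-(N-2m+1+\theta)}L^{-(N-2m+\theta)}$ to the $l\ne i$ terms of $\int(-\Delta)^m W\,Z_{i,N+1}$. But since $|Z_{i,N+1}|\lesssim\mu_i^{-1}U_i$ and $\int_{B_2(x_i)}U_i\lesssim\mu_i^{-(N-2m)/2}$, while $U_l^{m^*-1}\lesssim\mu_l^{-(N+2m)/2}|y-x_l|^{-(N+2m)}$ on $B_2(x_i)$, each such term is only $O\bigl(\mu_L^{-(N+1)}L^{-(N+2m)}\bigr)$, far smaller than the claimed bound after summing via~\eqref{1-6-1}. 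The $\theta$-gain error $\mu_L^{-(N-2m+1+\theta)}L^{-(N-2m+\theta)}$ actually comes from the cross term itself: from replacing $U_l(y)$ by its leading tail $\tilde C_m\mu_l^{-(N-2m)/2}|x_i-x_l|^{-(N-2m)}$ (next correction is $O(\mu_l^{-2}L^{-2})$ relatively) and from replacing $K(y)$ by $K(0)$ on the bulk region $B_{R/\mu_i}(x_i)$ where $U_i^{m^*-2}Z_{i,N+1}$ is concentrated. Since your final estimate is unchanged, this is a bookkeeping slip rather than a gap.
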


\begin{proof}

Similar to \eqref{3.47}, we can deduce
\begin{eqnarray*}
&&\int_{\mathbb{R}^{N}}K(y)W^{m^*-1}Z_{i,N+1}dy-\int_{\mathbb{R}^{N}}K(y)U_i^{m^*-1}Z_{i,N+1}dy
\\
&=&(m^*-1)\int_{\mathbb{R}^{N}}K(y)U_i^{m^*-2}Z_{i,N+1}\sum_{l\ne i} U_l + O\bigl(\frac{1}{\mu^{N}L^{N+2m+1}}\bigr)\\
&=& (m^*-1) \int_{\mathbb{R}^{N}}U_i^{m^*-2}Z_{i,N+1}\sum_{l\ne i} U_l\\
&&+ O\bigl( \frac1{\mu_i^{\beta+1+\theta}}  +  |x_i- P_i    |^{\beta-1+\theta}  +\frac{1}{\mu_L^{N-2m+1+\theta}L^{N-2m+\theta}}\bigr),
\end{eqnarray*}
which, together with Lemma~\ref{lemma:2.1}, gives the result.

\end{proof}

\section{Estimate of the error term}

 Let $u_L$ be a solution of $(P)$ with the form
\begin{equation}\label{1-21-8}
u_L= W_{L, x, \mu}+\omega_L,\quad W_{L, x, \mu}= \sum_{j=1}^{+\infty} U_{x_{j,L}, \mu_{j,L}},
\end{equation}
satisfying \eqref{3-6-1}, \eqref{2-6-1} and \eqref{2-1-9}.
In this section, we will estimate the error term $\omega_L$.

  It is easy to see that $\omega_L$ satisfies the following equation:
\begin{equation}\label{re1}
(-\Delta)^m \omega_L
-(m^*-1)K(y)
 W_{L,x,\mu}^{m^*-2}\omega_L =  N(\omega_L)+l_L,
\end{equation}
where  $l_L$ and $N(\omega_L)$  are defined in \eqref{10-22-8}  and \eqref{11-22-8} respectively.

 By  assumption~\eqref{3-6-1}, we have
 $\|\omega_L\|_{*} \to 0$  as $L\to +\infty$.

\vskip8pt

 Note that in the decomposition~\eqref{1-21-8}, we do not  assume that $\omega_L\in \mathbf{H}_n$.  See
 \eqref{100-15-1} for the definition of $\mathbf{H}_n$. Let us point out that
$x_{j, L}$ and $\tilde C_m \mu_{j, L}^{\frac{N-2m}2}$ may not be a maximum point and
the maximum value of $u_L$ in $B_\delta(x_{j, L}),$ respectively.
Let $\bar x_{j, L}\in B_1(P_j)$ be such that $u_L(\bar x_{j, L})= \max_{B_1(P_j)}u_L:=
\tilde C_m\bar \mu_{j, L}^{\frac{N-2m}2}$.
 From \eqref{3-6-1}, we can deduce that as $L\to +\infty$,
$\bar\mu_{j, L}= \mu_{j, L}( 1+ o_L(1)),$
and $\mu_{j, L} (\bar x_{j, L}- x_{j, L}) =o_L(1).$  As a result, we have
\[
   \begin{split}
  & U_{ x_{j, L},  \mu_{j, L}}-U_{\bar x_{j, L}, \bar \mu_{j, L}}\\
  =& O\bigl( |\mu_{j, L} (\bar x_{j, L}- x_{j, L})|+ \mu_L^{-1} |\bar{\mu}_{j, L}- \mu_{j, L}| \bigr)U_{\bar x_{j, L}, \bar \mu_{j, L}}
   =o_L(1)U_{\bar x_{j, L}, \bar \mu_{j, L}},
   \end{split}
   \]
and
\[
|\omega_L(x)|= o_L(1)\sum_{j=-\infty}^\infty \frac{\bar \mu_{j,L}^{\frac{N-2m}2}}{(1+ \bar \mu_{j,L}|y-\bar x_{j,L}|)^{ \frac{N-2m}2+\tau
}}.
\]
So, we find that in \eqref{1-21-8}, $x_{j, L}$ and $\mu_{j, L}$ can be replaced by $\bar x_{j, L}$ and
$\bar \mu_{j,L}$ respectively.
 For simplicity, in the following, we still use $x_{j, L}$ and $\mu_{j, L}$ to denote $\bar x_{j, L}$ and  $\bar \mu_{j, L}$ respectively.  Thus in \eqref{1-21-8}, it holds
\begin{equation}\label{1-22-8}
| \omega_{L}(x_{j, L})|=O\bigl(\frac1{\mu_{j, L}^{\frac{N-2m}2} L^{N-2m}}\bigr),\quad
| \nabla \omega_{L}(x_{j, L})|=O\bigl(\frac1{\mu_{j, L}^{\frac{N-2m}2} L^{N-2m+1}}\bigr).
\end{equation}
In the following, we will use \eqref{re1} to estimate $\omega_L$.

\begin{proposition}\label{p1-6-3} It holds
\[
\|\omega_L\|_{*}\le \frac{C}{\mu_{ L}^{
\min(
\frac{N+2m}2-\tau, \beta-\tau+1)}}+C\max_i |x_{i, L}-P_i|^\beta.
\]
\end{proposition}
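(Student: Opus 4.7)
The plan is to treat \eqref{re1} as a linear equation for $\omega_L$ driven by $N(\omega_L)+l_L$, and to establish the $\|\cdot\|_*$-bound by an argument parallel to the linear theory of Lemma~\ref{l1-8-1}, using the pointwise identities \eqref{1-22-8} to substitute for the orthogonality relations that $\omega_L$ does not enjoy.

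First I would convolve \eqref{re1} with the fundamental solution $C_m|y-z|^{2m-N}$ of $(-\Delta)^m$, then apply the pointwise bounds implicit in $\|\cdot\|_*$ and $\|\cdot\|_{**}$ together with Lemmas~\ref{lem:2} and \ref{lem:4}. Writing $\Psi_s(y):=\sigma(y)\sum_j\mu_{j,L}^{(N-2m)/2}/(1+\mu_{j,L}|y-x_{j,L}|)^{(N-2m)/2+s}$, this produces
\[
|\omega_L(y)|\le C\bigl(\|l_L\|_{**}+\|N(\omega_L)\|_{**}+\mu_L^{-\tilde\vartheta}\|\omega_L\|_{*}\bigr)\Psi_\tau(y)+C\|\omega_L\|_*\,\Psi_{\tau+\tilde\theta}(y).
\]
Dividing by $\Psi_\tau$ and taking the supremum, the first block is absorbable, since Lemma~\ref{l-1-5-3} gives $\|N(\omega_L)\|_{**}\le C\|\omega_L\|_*^{\min(m^*-1,2)}=o(1)\|\omega_L\|_*$, since $\mu_L^{-\tilde\vartheta}\to 0$, and since Lemma~\ref{l-2-5-3} supplies $\|l_L\|_{**}\le C\mu_L^{-\min((N+2m)/2-\tau,\,\beta-\tau+1)}+C\max_i|x_{i,L}-P_i|^\beta$. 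The second block is not absorbable by a uniform small factor, as the ratio $\Psi_{\tau+\tilde\theta}/\Psi_\tau$ is of order one near the bubble centers, so a compactness step is required.

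Arguing by contradiction, I would normalize $\|\omega_L\|_*=1$; then $\|l_L\|_{**}\to 0$. The inequality above forces $\omega_L$ to concentrate at some bubble index $j_L$: there exist points $y_L$ with $\mu_{j_L,L}|y_L-x_{j_L,L}|\le R$ such that $|\omega_L(y_L)|\ge c\,\mu_{j_L,L}^{(N-2m)/2}$ for some $c>0$. Rescaling $\tilde\omega_L(y):=\mu_{j_L,L}^{-(N-2m)/2}\omega_L(x_{j_L,L}+y/\mu_{j_L,L})$, by elliptic regularity $\tilde\omega_L\to\xi\not\equiv 0$ in $C^{2m}_{loc}$, with $(-\Delta)^m\xi=(m^*-1)U_{0,1}^{m^*-2}\xi$ in $\R^N$, so that $\xi=\sum_{k=0}^N b_k\psi_k$ by the non-degeneracy result invoked in Lemma~\ref{l1-10-4}. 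The preparatory renormalization $x_{j,L}\to\bar x_{j,L}$, $\mu_{j,L}\to\bar\mu_{j,L}$ announced before the statement guarantees \eqref{1-22-8}, which passes to the limit as $\xi(0)=0$ and $\nabla\xi(0)=0$; a direct computation of $\psi_k(0)$ and $\nabla\psi_k(0)$ then forces $b_k=0$ for every $k$, giving $\xi\equiv 0$, a contradiction.

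The principal obstacle is precisely this compactness step: the $\Psi_{\tau+\tilde\theta}$-block has no built-in smallness, so the integral-equation absorption alone is not sufficient. Because $\omega_L\notin\mathbf{H}_n$ in general, the orthogonality-based finish of Lemma~\ref{l1-8-1} cannot be invoked either; the pointwise identities \eqref{1-22-8}, tailored to annihilate both the dilation mode $\psi_0$ and the $N$ translation modes $\psi_1,\dots,\psi_N$ in the limit, are what rescue the argument and yield the advertised $\|\omega_L\|_*$-estimate in terms of $\|l_L\|_{**}$.
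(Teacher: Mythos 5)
Your proposal follows essentially the same route as the paper's own proof of Proposition~\ref{p1-6-3}: derive the pointwise bound from \eqref{re1} via convolution and Lemmas~\ref{lem:2}, \ref{lem:4}; absorb the $\|N(\omega_L)\|_{**}$ and $\|l_L\|_{**}$ contributions using Lemmas~\ref{l-1-5-3} and \ref{l-2-5-3}; observe that the $\Psi_{\tau+\tilde\theta}$-block cannot be absorbed uniformly and forces concentration near a bubble; then rescale, invoke nondegeneracy of $U_{0,1}$, and use \eqref{1-22-8} to kill all the kernel modes and reach a contradiction. This is exactly the structure of the paper's proof, including the case split on where the sup is achieved and the role of \eqref{1-22-8} as the substitute for orthogonality.

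One imprecision to note: near a bubble center the weight $\sigma(y)$ is of order $\mu_{j,L}^{-(\tau-1)}$, so when $\|\omega_L\|_*$ is of order one the peak of $|\omega_L|$ near $x_{j,L}$ is of order $\mu_{j,L}^{(N-2m)/2-\tau+1}$, not $\mu_{j,L}^{(N-2m)/2}$ as you wrote; equivalently, the rescaled function $\mu_{j,L}^{-(N-2m)/2}\omega_L(x_{j,L}+\cdot/\mu_{j,L})$ decays like $\mu_{j,L}^{-(\tau-1)}$ on compacts. To obtain a nontrivial limit you must normalize by the $\sigma$-corrected norm (what the paper calls $|||\tilde\omega_L|||_*$), not merely by $\mu_{j,L}^{-(N-2m)/2}$. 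This is a bookkeeping point rather than a conceptual one, and the paper itself is terse about it, but it does need to be tracked carefully for the $\xi(0)=\nabla\xi(0)=0$ step to quantitatively close via \eqref{1-22-8}.
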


\begin{proof}
 In the existence part, we already know that
\begin{equation}\label{3-3-9}
\begin{split}
&|\omega_L(y)|\Bigl(\sigma(y)\sum\limits_{i}
\frac{\mu_{i,L}^{\frac{N-2m}2}}{(1+\mu_{i,L}|y-x_{i,L}|)^{\frac{N-2m}{2}+\tau}}\Bigr)^{-1}\\
\le & C \bigl( \|\omega_L\|_*^{\min(m^*-1, 2)}+\|l_L\|_{**}\bigr)
+C\|\omega_L\|_*
\frac{\sum\limits_{i}\frac{1}{(1+\mu_{i,L}|y-x_{i,L}|)^{\frac{N-2m}{2}+\tau+\tilde \theta}}}{\sum\limits_{i}
\frac{1}{(1+\mu_{i,L}|y-x_{i,L}|)^{\frac{N-2m}{2}+\tau}}} ,
\end{split}
\end{equation}
where $\tilde \theta>0$ is a constant.

 Suppose that there is $y\in \mathbb R^N \setminus \cup_{j=1}^\infty B_{R\mu_{j, L}^{-1}}(x_{j, L})$
for some large $R>0$,
such that $\|\omega\|_*$ is achieved at $y$. Then, \eqref{3-3-9} gives
\begin{equation}\label{4-3-9}
\|\omega_L\|_*\le  C \bigl( \|\omega_L\|_*^{\min(m^*-1, 2)}+\|l_L\|_{**}\bigr)
+o_R(1)\|\omega_L\|_*.
\end{equation}
Since $\|\omega_L\|_*\to 0$ as $L\to +\infty$, we obtain
\begin{equation}\label{5-3-9}
\|\omega_L\|_*\le  C \|l_L\|_{**}.
\end{equation}

 Suppose that $\|\omega\|_*$ is achieved at $y\in B_{R\mu_{j, L}^{-1}}(x_{j, L})$.
Let
\[
\tilde \omega_L(y)=\mu_{j, L}^{-\frac{N-2m}2}\omega_L(\mu_{j, L}^{-1} y+ x_{j, L}),
\]
and
\[
|||\tilde \omega_L|||_{*} = \sup_{y \in \R^N }\Bigl(\sigma(\mu^{-1}_{j,L}y+x_{j,L})
\sum_{i=1}^\infty \frac{\mu_{j, L}^{-\frac{N-2m}2}\mu_{i, L}^{\frac{N-2m}2}}{(1+ \frac{\mu_{i,L}}{\mu_{j,L}}
|y-\mu_{j, L}(x_{i, L}-x_{j,L})|
)^{ \frac{N-2m}2+\tau
}}\Bigr)^{-1}|\tilde \omega_L(y)|.
\]
Then $|||\tilde \omega_L|||_{*}$ is achieved at some $y\in B_R(0)$.

 Suppose that
\[
|| \omega_L||_{*}\ge N_L\Bigl(\frac{1}{\mu_L^{
\min(
\frac{N+2m}2-\tau, \beta-\tau+1)}}+C\max_i |x_{i, L}-P_i|^\beta\Bigr),
\]
for some $N_L\to \infty$.  Then
as $L\to +\infty$, $\eta_L =\frac{\tilde \omega_L}{|||\tilde \omega_L|||_{*}}$ converges to $\eta\ne 0$, which satisfies
\[
(-\Delta)^m \eta -(m^*-1) U_{0,1}^{m^*-2}\eta=0,\quad \text{in}\; \mathbb R^N,
\]
since    $\frac{\|l_L\|_{**}}{ ||\tilde \omega_L||_{*}   }\le \frac{C}{ N_L}    \to 0$. This gives
\[
\eta=\alpha_0 \frac{\partial U_{0,\mu}}{\partial \mu}\bigr|_{\mu=1}+\sum_{j=1}^N \alpha_j\frac{\partial U_{0, 1}
}{\partial x_j},
\]
for some constant $\alpha_j$.

 On the other hand, we have
\begin{eqnarray*}
\tilde \omega_L(0)&=&\mu_{j, L}^{-\frac{N-2m}2}\omega_L( x_{j, L})=O\bigl(\frac1{\mu_L^{N-2m}L^{N-2m}}\bigr),\\
\nabla \tilde \omega_L(0)&=&\mu_{j, L}^{-\frac{N-2m+2}2}\nabla\omega_L( x_{j, L})=O\bigl(\frac1{\mu_L^{N-2m+1}L^{N-2m+1}}\bigr).
\end{eqnarray*}
So, we find  $\eta(0)=0$ and $\nabla \eta(0)=0$, which implies $\alpha_0=\alpha_1=\cdots=\alpha_N=0$. This is a contradiction.

\end{proof}

\begin{corollary}\label{c1-5-9}

For any $\delta>0$, we have
\[
\|\omega_L(x)\|_{C^{2m-1}(B_{2\delta}(x_{j, L})\setminus B_{\frac12\delta}(x_{j, L}))}\le\frac{C}{\mu_L^\tau}\Bigl( \frac{1}{\mu_{ L}^{
\min(
\frac{N+2m}2-\tau, \beta-\tau+1)}}+\max_i |x_{i, L}-P_i|^\beta\Bigr).
\]

\end{corollary}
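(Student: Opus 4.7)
The plan is to pass from the $\|\cdot\|_*$ estimate of Proposition~\ref{p1-6-3} to a pointwise $L^\infty$ estimate on a slightly enlarged annulus, and then upgrade it to a $C^{2m-1}$ estimate via interior elliptic regularity for the polyharmonic operator $(-\Delta)^m$. Concretely, set $A_\delta=B_{2\delta}(x_{j,L})\setminus B_{\delta/2}(x_{j,L})$ and a slightly enlarged annulus $A'_\delta=B_{3\delta}(x_{j,L})\setminus B_{\delta/3}(x_{j,L})$; interior estimates applied on $A'_\delta$ will control all derivatives up to order $2m-1$ on $A_\delta$.

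First I would show that on $A'_\delta$ the weight defining $\|\cdot\|_*$ is $O(\mu_L^{-\tau})$. For the index $i=j$ the bound $|y-x_{j,L}|\ge c\delta$ yields the factor $\mu_{j,L}^{-\tau}$. For $i\ne j$, \eqref{2-6-1} and the triangle inequality give $|y-x_{i,L}|\ge c L|\tilde P_i-\tilde P_j|$, and the summability condition \eqref{1-6-1} together with $\tau>k$ lets one sum over $i\ne j$ to obtain a contribution of order $\mu_L^{-\tau}L^{-(N-2m)/2-\tau}$, which is smaller. Since $\sigma(y)\le 1$, this gives
\[
\|\omega_L\|_{L^\infty(A'_\delta)}\le \frac{C}{\mu_L^\tau}\|\omega_L\|_*.
\]

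Second I would bound the right-hand side of $(-\Delta)^m\omega_L=g_L$, where
\[
g_L=(m^*-1)K(y)W_{L,x,\mu}^{m^*-2}\omega_L+N(\omega_L)+l_L,
\]
by the same order on $A'_\delta$. On $A'_\delta$ the sum $W_{L,x,\mu}$ is itself pointwise $O(\mu_L^{-(N-2m)/2})$ by the same annulus argument, so $K W^{m^*-2}|\omega_L|$ has an extra factor $\mu_L^{-2m}$ relative to $\|\omega_L\|_{L^\infty(A'_\delta)}$; the nonlinear piece $|N(\omega_L)|\le C(|\omega_L|^{m^*-1}+W^{m^*-3}\omega_L^2)$ is even more favorable; and a pointwise inspection of the terms appearing in the proof of Lemma~\ref{l-2-5-3}, restricted to $A'_\delta$ (where we are again far from every bubble centre), shows that $l_L$ is controlled pointwise by $C\mu_L^{-\tau}$ times the bracketed quantity in the statement. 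Altogether this gives an $L^\infty(A'_\delta)$ bound for $g_L$ of the desired form.

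The final step would be standard interior $L^p$/Schauder estimates for the constant-coefficient $2m$-th order elliptic operator $(-\Delta)^m$ on the fixed-geometry pair $A_\delta\subset A'_\delta$, yielding
\[
\|\omega_L\|_{C^{2m-1}(A_\delta)}\le C\bigl(\|\omega_L\|_{L^\infty(A'_\delta)}+\|g_L\|_{L^\infty(A'_\delta)}\bigr),
\]
with a constant independent of $L$ (the annuli have fixed shape and size, and the operator is translation-invariant); inserting the bound of Proposition~\ref{p1-6-3} then produces the stated estimate. The main source of friction is not the regularity step but the bookkeeping in the second step: one must check term by term that the gain of $\mu_L^{-\tau}$, coming from being uniformly far from every bubble centre on $A'_\delta$, survives through the product $W^{m^*-2}\omega_L$ and through the expansion of $l_L$, rather than being lost to the less favourable factor $\mu_L^{-(N-2m)/2}$ that appears in the pointwise size of a single bubble.
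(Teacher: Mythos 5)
Your proposal matches the paper's proof essentially step for step: pass from Proposition~\ref{p1-6-3} to a pointwise $O(\mu_L^{-\tau})$ bound on an annulus around $x_{j,L}$, bound the right-hand side of \eqref{re1} on a slightly larger annulus by the same order (absorbing the term $KW^{m^*-2}\omega_L$ into the $L^\infty$ bound on $\omega_L$ and controlling $N(\omega_L)$ and $l_L$ via Lemmas~\ref{l-1-5-3} and \ref{l-2-5-3}), and then apply interior elliptic ($L^p$/Schauder) regularity for $(-\Delta)^m$ on the nested annuli. The only cosmetic difference is that the paper compresses the second step by observing that the $\|\cdot\|_{**}$ weight is likewise $O(\mu_L^{-\tau})$ on the annulus, whereas you redo the pointwise bookkeeping term by term; the argument is otherwise identical.
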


\begin{proof}

It follows from Proposition~\ref{p1-6-3} that
\begin{eqnarray*}
|\omega_L(x)|& \le & \|\omega_L(x)\|_* \sum_{j=1}^\infty \frac{\mu_{j,L}^{\frac{N-2m}2}}{(1+ \mu_{j,L}|y-x_{j,L}|)^{ \frac{N-2m}2+\tau
}}\\
&\le &\frac{C}{\mu_L^\tau}\Bigl( \frac{1}{\mu_{ L}^{
\min(
\frac{N+2m}2-\tau, \beta-\tau+1)}}+\max_i |x_{i, L}-P_i|^\beta\Bigr), \quad x\in B_{4\delta}(x_{j, L})\setminus B_{\frac14\delta}(x_{j, L}).
\end{eqnarray*}
On the other hand, from \eqref{re1}, using the $L^p$ estimates, we can deduce that for any $p>1$,
\begin{eqnarray*}
&&\|\omega_L\|_{W^{2m, p}(B_{2\delta}(x_{j, L})\setminus B_{\frac12\delta}(x_{j, L}))}\\
&\le & C\|\omega_L\|_{L^\infty(B_{4\delta}(x_{j, L})\setminus B_{\frac14\delta}(x_{j, L}))}\\
&&+
C\|(m^*-1)K(y)
 W_{L,x,\mu}^{m^*-2}\omega_L +  N(\omega_L)+l_L\|_{L^\infty(B_{4\delta}(x_{j, L})\setminus B_{\frac14\delta}(x_{j, L}))}
\\
&\le & C\|\omega_L\|_{L^\infty(B_{4\delta}(x_{j, L})\setminus B_{\frac14\delta}(x_{j, L}))}+C \Bigl(
\|  N(\omega_L)\|_{**}+\|l_L\|_{**}\Bigr)\frac1{\mu^\tau}.
\end{eqnarray*}
The result follows from Lemmas~\ref{l-1-5-3}
and \ref{l-2-5-3}.
\end{proof}

\section{Some basic lemmas}

  For any integer $n\ge 2$, consider the following equations:
\begin{equation}\label{1-13-9}
|a_j|^{\beta-1} a_j- \sum_{i=1}^n  d_{ji}a_{i}=0, \quad j=1,\cdots, n,
\end{equation}
where $\beta>1$ is a constant,  $d_{ij}$ satisfies  $d_{ii}=0$, $i=1, \cdots, k$, $d_{ij}>0$, $d_{ij}=d_{ji}$,
$i\ne j$,  and $c_1\ge \max_j \sum_{i=1}^n  d_{ji}\ge\min_j \sum_{i=1}^n  d_{ji}\ge c_0>0$.
It is easy to see that if $a=(a_1, \cdots, a_k)$ satisfies \eqref{1-13-9}, then $a$ is a critical
point of the function defined as
\begin{equation}\label{0-13-9}
F(x)=\frac1{\beta+1} \sum_{j=1}^n |x_j|^{\beta+1}-\frac12\sum_{i=1}^n\sum_{j=1}^n d_{ij}x_ix_j.
\end{equation}

It is easy to show that $\min_{x\in \mathbb R^n} F(x)<0$ is achieved at $a\ne 0$. Moreover, from $d_{ij}\ge 0$,
we can assume $a_j\ge 0$, $j=1, \cdots, n$. Using \eqref{1-13-9}, if $a_j=0$ for some $j$, then
$a_i=0$ for all $i\ne j$. Moreover, from \eqref{1-13-9}, we find
\begin{equation}\label{100-1-13-9}
(\max_j a_j)^{\beta-1} \le \max_j \sum_{i=1}^n  d_{ji},\quad
(\min_j a_j)^{\beta-1} \ge \min_j \sum_{i=1}^n  d_{ji}\ge c_0>0.
\end{equation}
We now prove the following result.

\begin{lemma}\label{l1-13-9}

The solution of \eqref{1-13-9} is unique if $a_j>0$, $j=1, \cdots, n$.
Moreover, if we define the linear operator $A$ as follows:
\begin{equation}\label{2-13-9}
(A X)_j= \beta a_j^{\beta-1} x_j - \sum_{i=1}^n d_{ji} x_i, \quad j=1, \cdots, n.
\end{equation}
Then  $\| A X\|\ge c' \|X\|$ for some $c'>0$, where the norm for $X$ is defined as $\|X\|=\max_j |x_j|$.
\end{lemma}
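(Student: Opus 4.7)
The plan is to prove both assertions by a discrete maximum principle, reducing everything to the behavior at an index at which the ratio $x_j/a_j$ attains an extremum; the key point is that both the nonlinear equation and its linearization are amenable to comparison arguments once one exploits the positivity of the off-diagonal coefficients $d_{ji}$. I will use the a priori bounds \eqref{100-1-13-9}, which, combined with the hypothesis $c_0\le \min_j\sum_i d_{ji}\le \max_j\sum_i d_{ji}\le c_1$, give $c_0^{1/(\beta-1)}\le a_j\le c_1^{1/(\beta-1)}$ for every $j$.

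For uniqueness, suppose $a=(a_j)$ and $b=(b_j)$ are two positive solutions of \eqref{1-13-9}. Setting $t=\max_j a_j/b_j$ and letting $j_0$ realize this maximum, we have $a_i\le t b_i$ for every $i$ and $a_{j_0}=t b_{j_0}$, so
\[
t^\beta b_{j_0}^\beta=a_{j_0}^\beta=\sum_i d_{j_0 i}a_i\le t\sum_i d_{j_0 i}b_i=t b_{j_0}^\beta,
\]
which forces $t\le 1$. Exchanging the roles of $a$ and $b$ gives $1/t\le 1$, hence $t=1$ and $a=b$.

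For the lower bound on $A$, given $X\ne 0$, set $Y=AX$, $t=\max_j x_j/a_j$ (attained at $j_0$) and $s=\min_j x_j/a_j$ (attained at $j_1$). Using $x_i\le t a_i$ together with the identity $\sum_i d_{j_0 i}a_i=a_{j_0}^\beta$, one computes
\[
y_{j_0}=\beta a_{j_0}^{\beta-1}(t a_{j_0})-\sum_i d_{j_0 i}x_i\ge \beta t a_{j_0}^\beta - t a_{j_0}^\beta=(\beta-1)\,t\,a_{j_0}^\beta,
\]
and symmetrically $-y_{j_1}\ge (\beta-1)(-s)\,a_{j_1}^\beta$. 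Setting $M:=\max(t,-s)=\max_j|x_j/a_j|$, one of these two bounds yields $\|Y\|\ge (\beta-1)\,M\,\min_j a_j^\beta$, while $\|X\|\le M\max_j a_j$, so
\[
\|Y\|\ge \frac{(\beta-1)\,c_0^{\beta/(\beta-1)}}{c_1^{1/(\beta-1)}}\,\|X\|.
\]

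The main subtlety is the case analysis in the linearized estimate: when $t$ and $-s$ have opposite signs, one must choose between the index $j_0$ and the index $j_1$ depending on which of $t,-s$ is larger, and check that in each of the three possible sign configurations ($s>0$, $t<0$, or mixed signs) the chosen index actually captures $M=\max_j|x_j/a_j|$. A secondary but essential point is uniformity in $n$: because the argument localizes at a single extremal index and uses only the scalar bounds from \eqref{100-1-13-9}, the constant $c'$ depends only on $\beta$, $c_0$, $c_1$, which is what is needed when passing from the finite-bubble construction to the infinite one.
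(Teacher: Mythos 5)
Your proof is correct, and the uniqueness half takes a genuinely different route from the paper's. The paper introduces the additive comparison quantity $T=\max_j (a_j-b_j)/a_j$, splits into the cases $T\le 0$ and $T>0$, and in each case invokes the tangent-line (convexity) inequality for $t\mapsto t^\beta$ to close the argument; your version instead uses the multiplicative comparison $t=\max_j a_j/b_j$ and exploits the $\beta$-homogeneity directly, so that a single inequality $t^\beta\le t$ plus the symmetry in $a,b$ finishes the proof with no case split and no Taylor/tangent bound. This is shorter and, incidentally, sidesteps a subtle point in the paper's Case (ii): as written there, the inequality $\beta a_j^{\beta-1}(a_j-b_j)\le a_j^\beta-b_j^\beta$ goes in the wrong direction when $a_j>b_j$ (since $t^{\beta-1}$ is increasing, one actually has $a_j^\beta-b_j^\beta\le\beta a_j^{\beta-1}(a_j-b_j)$), whereas the intended argument can be repaired by linearizing at $b_j$ or, as you do, by working multiplicatively. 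For the coercivity of $A$, your argument and the paper's are essentially the same discrete maximum principle: the paper works directly with $x:=\sup_j|x_j|/a_j$ and the triangle inequality, obtaining $|(AX)_j|\ge a_j^\beta(\beta|x_j|/a_j-x)\ge(\beta-1)x\min_j a_j^\beta$ at the extremal index, whereas you keep the sign by tracking $t=\max_j x_j/a_j$ and $s=\min_j x_j/a_j$ separately and then doing a case analysis on whether $M=\max(t,-s)$ is realized by $t$ or by $-s$; both yield the same constant $c'=(\beta-1)c_0^{\beta/(\beta-1)}/c_1^{1/(\beta-1)}$, uniform in $n$, which is the point that matters for the passage to infinitely many bubbles.
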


\begin{proof}

This lemma was proved in \cite{DLY}. For the readers' convenience, we give its proof here.

 Suppose that \eqref{1-13-9} has two solutions $a=(a_1, \cdots, a_n)$ and $b=(b_1, \cdots, b_n)$,
$a_j>0$, $b_j>0$, $j=1, \cdots, n$.  Let
$T=\max_{j}\frac{a_j-b_j}{a_j}$. We have two possibilities. (i) $T\le 0$,  (ii) $T>0$.

 If $T\le 0$, then $a_j\le b_j$ for all $j=1, \cdots, n$. So we can define $T_1=\sup_j\frac{t_j}{a_j}$, $t_j= b_j-a_j\ge 0$. Then $b_j^\beta \ge a_j^\beta +\beta a_j^{\beta-1}(b_j-a_j)$.
So
\[
\begin{split}
\beta a_j^{\beta-1}(b_j-a_j)\le \sum_{i=1}^n d_{ji} (b_i- a_i)\le T_1\sum_{i=1}^n d_{ji} a_i= T_1 a_j^\beta,
\end{split}
\]
which implies $\beta T_1\le T_1$. So $T_1=0$.

 If $T>0$, then there is a $j$, such that $\frac{a_j-b_j}{a_j}=T>0$. As a result,
\[
\beta a_j^{\beta-1}(a_j-b_j)\le \sum_{i=1}^n d_{ji} (a_i- b_i)\le T\sum_{i=1}^n d_{ji} a_i= T a_j^\beta,
\]
which implies $\beta T\le T$. So $T=0$. This is a contradiction.

 To prove the last part,  for any $X$ with $\|X\|=1$, we let
$
 x=\sup_j\frac{|x_j|}{a_j}$.
   Then
\[
|\sum_{i=1}^n d_{ji} x_i|\le x  \sum_{i=1}^n d_{ji} a_i=x a_j^{\beta}.
 \]
As a result,
\[
| (A X)_j|\ge \beta a_j^{\beta-1} |x_j| -x a_j^{\beta}= a_j^{\beta} \bigl( \beta \frac{|x_j|}{a_j}- x\bigr).
\]
 Since $\beta>1$, we can choose $j$, such that
\[
| (A X)_j|\ge a_j^{\beta} \bigl( \beta \frac{|x_j|}{a_j}- x\bigr)\ge c'>0.
\]

\end{proof}

Now we consider
\begin{equation}\label{100-1-13-9}
a_j^{\beta} - \sum_{i=1}^n  d_{ji}a_{i}=0, \quad j=1,\cdots, .
\end{equation}
Using Lemma~\ref{l1-13-9}  and \eqref{100-1-13-9}, we can easily prove the following result.

\begin{lemma}\label{100-l1-13-9}

Equation \eqref{100-1-13-9} has a  unique solution $a_j>0$, $j=1, \cdots, $.
Moreover, if we define the linear operator $A$ as follows:
\begin{equation}\label{100-2-13-9}
(A X)_j= \beta a_j^{\beta-1} x_j - \sum_{i=1}^\infty d_{ji} x_i, \quad j=1, \cdots, .
\end{equation}
Then  $\| A X\|\ge c' \|X\|$ for some $c'>0$, where the norm for $X$ is defined as $\|X\|=\max_j |x_j|$.
\end{lemma}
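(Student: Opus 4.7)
\textbf{Proof proposal for Lemma~\ref{100-l1-13-9}.}
The plan is to reduce to the finite-dimensional result Lemma~\ref{l1-13-9} by a truncation and limiting argument, using the uniform bounds in \eqref{100-1-13-9} as the key compactness input. Throughout, write $c_0 \le \min_j \sum_i d_{ji} \le \max_j \sum_i d_{ji} \le c_1$ for the standing bounds on the row sums.

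For \emph{existence}, I would apply Lemma~\ref{l1-13-9} to the truncated system on indices $\{1,\ldots,n\}$, obtaining a unique positive solution $a^{(n)}=(a_1^{(n)},\ldots,a_n^{(n)})$. The two-sided estimate \eqref{100-1-13-9} applied to the truncation gives $c_0^{1/(\beta-1)}\le a_j^{(n)}\le c_1^{1/(\beta-1)}$ uniformly in $j$ and $n$. A standard diagonal-subsequence extraction yields $a_j^{(n_k)}\to a_j\in[c_0^{1/(\beta-1)},c_1^{1/(\beta-1)}]$ for every $j$. Passage to the limit in $(a_j^{(n)})^\beta=\sum_{i=1}^n d_{ji}a_i^{(n)}$ is justified by dominated convergence, since $\sum_i d_{ji}<\infty$ uniformly in $j$ and $a_i^{(n)}$ is uniformly bounded; the limit $a=(a_j)_{j\ge 1}$ therefore solves $a_j^\beta=\sum_{i=1}^\infty d_{ji}a_i$.

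For \emph{uniqueness}, I would mimic the argument in Lemma~\ref{l1-13-9}, replacing ``attained maximum'' by ``supremum along a sequence.'' Given two positive solutions $a,b$ with the same two-sided bounds, set $M=\sup_j a_j/b_j\in(0,\infty)$ and pick $j_n$ with $a_{j_n}/b_{j_n}\to M$. Using $a_i\le M b_i$ for all $i$ in the equation,
\[
a_{j_n}^\beta=\sum_i d_{j_n,i}a_i\le M\sum_i d_{j_n,i}b_i=M\,b_{j_n}^\beta,
\]
so $(a_{j_n}/b_{j_n})^{\beta-1}\le 1$. Passing to the limit gives $M^{\beta-1}\le 1$, hence $M\le 1$ (since $\beta>1$). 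The same argument with the roles of $a$ and $b$ swapped yields $a=b$.

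For the \emph{linear lower bound}, fix $X$ with $\|X\|=1$ and put $x=\sup_j|x_j|/a_j$. The uniform upper bound on $a_j$ forces $x\ge 1/\|a\|_\infty>0$. For every $j$,
\[
|(AX)_j|\ge \beta a_j^{\beta-1}|x_j|-\Bigl|\sum_i d_{ji}x_i\Bigr|\ge \beta a_j^{\beta-1}|x_j|-x\sum_i d_{ji}a_i=a_j^\beta\Bigl(\beta\frac{|x_j|}{a_j}-x\Bigr).
\]
Choosing $j_n$ with $|x_{j_n}|/a_{j_n}\to x$ and exploiting $\beta>1$, the parenthesised factor is bounded below by $(\beta-1)x/2$ for $n$ large. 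Combined with the uniform lower bound $a_{j_n}^\beta \ge c_0^{\beta/(\beta-1)}$, this gives $|(AX)_{j_n}|\ge c'>0$ for some constant $c'$ depending only on $\beta,c_0,c_1$, whence $\|AX\|\ge c'=c'\|X\|$.

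The main obstacle, compared with the finite case, is that suprema over the infinite index set need not be attained, so one cannot literally ``pick the extremal index $j$'' as in the proof of Lemma~\ref{l1-13-9}. This is overcome by systematically replacing that step with ``pick a sequence $j_n$ realising the sup''; the uniform two-sided bounds on $\{a_j\}$ then ensure the convexity and algebraic inequalities survive passage to the limit. No genuinely new ingredient beyond Lemma~\ref{l1-13-9} and \eqref{100-1-13-9} is required.
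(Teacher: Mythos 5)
Your overall route is the one the paper itself intends: its ``proof'' of Lemma~\ref{100-l1-13-9} is the single remark that the result follows from Lemma~\ref{l1-13-9} and the bounds \eqref{100-1-13-9}, and your truncation-plus-limit scheme, with attained maxima replaced by maximizing sequences, is the natural way to make that precise. The uniqueness and coercivity paragraphs are essentially sound: with two-sided bounds on the solutions, $M=\sup_j a_j/b_j$ and $x=\sup_j |x_j|/a_j$ are finite, the inequalities $a_i\le Mb_i$ and $|x_i|\le x a_i$ can be summed against the rows $d_{j\cdot}$, and the gain coming from $\beta>1$ survives the passage to the limit along the maximizing sequence. (Two small remarks: from $a_{j_n}^\beta\le M b_{j_n}^\beta$ you get $(a_{j_n}/b_{j_n})^{\beta}\le M$, not $(a_{j_n}/b_{j_n})^{\beta-1}\le 1$; the limiting conclusion $M^{\beta}\le M$, hence $M\le1$, is what you actually use. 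Also, as in the paper, uniqueness must be understood within the class of solutions obeying the two-sided bounds, which is exactly the class arising in the application; your phrase ``with the same two-sided bounds'' implicitly does this, and it should be said explicitly.)

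The step that does not follow as written is the uniform bound $c_0^{1/(\beta-1)}\le a_j^{(n)}$ for the truncated solutions. Applying the max/min estimate to the $n$-dimensional system only gives $\bigl(\min_{j\le n}a_j^{(n)}\bigr)^{\beta-1}\ge \min_{j\le n}\sum_{i\le n}d_{ji}$, i.e.\ a bound in terms of the \emph{truncated} row sums, and the hypothesis $\sum_{i=1}^{\infty}d_{ji}\ge c_0$ does not control these: a symmetric matrix can place almost all of the mass of some rows $j\le n$ beyond index $n$ (pair each index with a much later partner), so $\min_{j\le n}\sum_{i\le n}d_{ji}$ may tend to $0$ and, as stated, nothing prevents the diagonal limit from degenerating to $a\equiv0$. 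The gap is fixable: for any fixed $i\ne j$ and all $n\ge\max(i,j)$ the truncated equations give $\bigl(a_j^{(n)}\bigr)^\beta\ge d_{ji}a_i^{(n)}$ and $\bigl(a_i^{(n)}\bigr)^\beta\ge d_{ji}a_j^{(n)}$, hence $a_j^{(n)}\ge d_{ji}^{1/(\beta-1)}>0$ uniformly in $n$ for each fixed $j$, so the limit is a genuinely positive solution; alternatively, read the hypotheses (as the paper's own notation $\sum_{i=1}^{n}$ in \eqref{100-1-13-9} suggests, and as the lattice application provides) as requiring the row-sum bounds uniformly over all truncations, in which case your bound is correct as stated. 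Note that a uniform two-sided bound on the limiting $a$ is needed again later — it is what makes $x=\sup_j|x_j|/a_j$ finite and $a_{j_n}^{\beta}$ bounded below in your coercivity argument — so whichever repair you choose must deliver $\inf_j a_j\ge c>0$, not merely $a_j>0$ for each $j$.
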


\medskip

\textbf{Acknowledgment.}  The first and second authors are supported by NSFC(11171171, 11331010,11328101). The third author is partially supported
by ARC (DP130102773).

\end{document}